\documentclass[11 pt]{amsart}

\usepackage{amssymb, mathrsfs, fullpage, mathtools, verbatim} 

\newtheorem{theorem}{Theorem}[section]
\newtheorem{lemma}[theorem]{Lemma}
\newtheorem{proposition}[theorem]{Proposition}

\theoremstyle{definition}
\newtheorem{definition}[theorem]{Definition}

\allowdisplaybreaks[1]

\newcommand{\N}{\ensuremath{\mathbb{N}}}


\DeclarePairedDelimiter{\ceil}{\lceil}{\rceil}

\DeclareMathOperator{\injrad}{inj}
\DeclareMathOperator{\supp}{supp}
\DeclareMathOperator{\vol}{vol}

\newcommand{\x}{\ensuremath{\times}}

\newcommand{\dif}{\ensuremath{\partial}}
\newcommand{\intd}{\ensuremath{\,\textrm{d}}}
\newcommand{\Ob}{\ensuremath{\mathcal{O}}}
\newcommand{\Rm}{\ensuremath{\mathrm{Rm}}}
\newcommand{\Rc}{\ensuremath{\mathrm{Rc}}}
\newcommand{\Obhat}{\ensuremath{\widehat{\mathcal{O}}}}

\renewcommand{\phi}{\varphi}
\setlength{\parindent}{.2in}

\begin{document}

\title{Ambient Obstruction Flow}
\author{Christopher Lopez}
\date{\today}

\begin{abstract}
We establish fundamental results for a parabolic flow of Riemannian metrics  introduced by Bahuaud-Helliwell in \cite{BahuaudHelliwellExistence} which is based on the Fefferman-Graham ambient obstruction tensor. First, we obtain local $L^2$ smoothing estimates for the curvature tensor and use them to prove pointwise smoothing estimates for the curvature tensor. We use the pointwise smoothing estimates to show that the curvature must blow up for a finite time singular solution. We also use the pointwise smoothing estimates to prove a compactness theorem for a sequence of solutions with bounded $C^0$ curvature norm and injectivity radius bounded from below at one point. Finally, we use the compactness theorem to obtain a singularity model from a finite time singular solution and to characterize the behavior at infinity of a nonsingular solution.
\end{abstract}

\maketitle

\section{Introduction} \label{sec: introduction}
\subsection{Introduction}
The uniformizatiom theorem ensures that for a compact two dimensional Riemannian manifold $(M,g)$, there is a metric $\tilde{g}$ conformal to $g$ for which $(M,\tilde{g})$ has constant sectional curvature equal to $K$. Moreover, the sign of $K$ can be determined via the Gauss-Bonnet theorem. In higher dimensions, curvature functionals have been used with great success to define and locate optimal metrics in higher dimensions; see \cite{LeBrunCurvatureFunctionalsOptimalMetricsDifferentialTopology4Mfds}. One conformally invariant curvature functional for a $4$-dimensional Riemannian manifold $(M,g)$ is given by
\begin{equation*}
\mathcal{F}_W^4(g)=\int_M|W_g|^2\,dV_g,
\end{equation*}
where $W_{ijkl}$ is the Weyl tensor. The negative gradient of $\mathcal{F}_W^4$ is the Bach tensor $B_{ij}$ defined as
\begin{equation*}
B_{ij}=-\nabla^k\nabla^lW_{kijl}-\tfrac{1}{2}R^{kl}W_{kijl}.
\end{equation*}
The study of critical metrics for $\mathcal{F}_W^4$, ie. Bach-flat metrics, has been fruitful. The class of Bach-flat metrics contains, as  shown in \cite{BesseEinsteinManifolds}, familiar metrics such as locally conformally Einstein metrics, scalar flat (anti) self-dual metrics. 

Another conformally invariant functional for a $4$-dimensional Riemannian manifold $(M,g)$ is given by
\begin{equation*}
\mathcal{F}_Q^4(g)=\int_{M} Q(g)\,dV_g,
\end{equation*}
where $Q(g)$ is a scalar quantity introduced by Branson in \cite{BransonFunctionalDeterminant} called the $Q$ curvature.
Via the Chern-Gauss-Bonnet theorem, this functional is related to $\mathcal{F}_W^4$ by $\mathcal{F}_Q^4=8\pi^2\chi(M)-\frac{1}{4}\mathcal{F}_W^4$. The Bach tensor is also the gradient of $\mathcal{F}_Q^4$. Unlike the Weyl tensor, the $Q$ curvature is not pointwise conformally covariant.

One can generalize the $Q$ curvature to a scalar quantitiy defined on $n$ dimensional Riemannian manifolds $(M,g)$, where $n$ is even. Consider the functionals defined for $n$ even by
\begin{equation*}
\mathcal{F}_Q^n(g)=\int_M Q(g)\,dV_g.
\end{equation*}
These functionals are conformally invariant. The gradient of $\mathcal{F}_Q^n$ is a symmetric $2$-tensor $\Ob$, introduced by Fefferman and Graham in \cite{FeffermanGrahamConformalInvariants}, called the ambient obstruction tensor. This tensor arises in physics: for example, Anderson and Chru\'{s}ciel use $\Ob$ in \cite{AndersonChruscielAsymptoticallySimpleSolutionsOfTheVacuumEinsteinEquationsInEvenDimensions} to construct global solutions of the vacuum Einstein equation in even dimensions. In dimension $4$, $\Ob$ is just the Bach tensor. The ambient obstruction tensor is conformally covariant in $n$ dimensions. This is in contrast to the $n$ dimensional generalization of the Bach tensor, which is only conformally covariant in dimension $4$. This fact follows from a result in Graham-Hirachi \cite{GrahamHirachiAmbientOstructionTensorQ_Curvature} stating that in even dimensions 6 and greater, the only conformally covariant tensors essentially are $W$ and $\Ob$. Extending the $4$-dimensional case, Fefferman and Graham showed in \cite{FeffermanGrahamAmbientMetric} that $\Ob$ vanishes for Einstein metrics for all even dimensions. However, there also exist non conformally Einstein metrics for which $\Ob=0$, as shown by Gover and Leitner in \cite{GoverLeitnerSubproductConstructionOfPoincareEinsteinMetrics}. The conformal covariance of $\Ob$ and the fact that obstruction flat metrics generalize conformally Einstein metrics suggest that studying the critical points of $\mathcal{F}_Q^n$ via its gradient flow may aid in the study of optimal metrics on $M$. Our main goal is to establish fundamental results for this gradient flow.

\subsection{Main Results}
We will continue the study of a variant of the gradient flow of $\mathcal{F}_Q^n$, that was introduced by Bahuaud and Helliwell in \cite{BahuaudHelliwellExistence}, establishing fundamental results. This flow, which we will refer to as the \textbf{ambient obstruction flow} (AOF), is defined for a family of metrics $g(t)$ on a smooth manifold $M$ by
\begin{equation}
 \begin{cases}
  \partial_t g=(-1)^\frac{n}{2}\Ob+\frac{(-1)^\frac{n}{2}}{2(n-1)(n-2)}(\Delta^{\frac{n}{2}-1}R)g \label{eqn: Obhat flow intro} \\
  g(0)=h.
 \end{cases}
\end{equation}
The conformal term involving the scalar curvature was added in order to counteract the invariance of $\Ob$ under the action of the conformal group on the space of metrics on $M$. In the papers \cite{BahuaudHelliwellExistence}, \cite{BahuaudHelliwellUniqueness} they proved the short time existence and uniqueness, respectively, of solutions to AOF given by \eqref{eqn: Obhat flow intro}. Kotschwar recently has given in \cite{KotschwarEnergyApproachtoUniquenessForHigherOrderGeometricFlows} an alternate uniqueness proof via a classical energy argument without using the DeTurck trick.

Gradient flows have been studied extensively since Hamilton in \cite{Hamilton3MfdPositiveRicciCurvature,HamiltonCptnessPropertyForSolutionsOfTheRicciFlow,HamiltonFormationOfSingularitiesInTheRicciFlow} and Perelman in \cite{PerelmanEntropyFormulaForTheRicciFlowAndItsGeometricApplications,PerelmanRicciFlowWithSurgeryOnThreeManifolds,PerelmanFiniteExtinctionTime} (expositions are given in \cite{CaoZhuCompleteProofOfThePoincareAndGeometrizationConjectures,KleinerLottNotesOnPerelmanPapers,MorganTianRicciFlowPoincareConjecture}) used the Ricci flow to study the geometry of 3-manifolds. In the past fifteen years, these have begun to include higher order flows. Mantegazza studied a family of higher order mean curvature flows in \cite{MantegazzaSmoothGeometricEvolutionsOfHypersurfaces}, Kuwert-Sch\"{a}tzle studied the gradient flow of the Willmore functional in \cite{KuwertSchaetzleGradientFlowWillmoreFunctional}, Streets studied the gradient flow of $\int_M|\Rm|^2$  in \cite{StreetsGradientFlowIntegral_L2Curvature}, Chen-He studied the Calabi flow in \cite{ChenHeOnTheCalabiFlow,ChenHeCalabiFlowOnKahlerSurfacesWithBoundedSobolevConstant}, and Kisisel-Sarioglu studied the Cotton flow in \cite{KisiselCottonFlow}. Bour studied the gradient flows of certain quadratic curvature functionals in \cite{BourFourthOrderCurvatureFlowsAndGeometricApplications}, including some variants of $\int_M|W|^2$.

Our first result gives pointwise smoothing estimates for the $C^0$ norms of the derivatives of the curvature. Since the AOF PDE \eqref{eqn: Obhat flow intro} is of order $n$, the maximum principle cannot be used to obtain these estimates. Instead, we first use interpolation inequalities derived by Kuwert and Sch\"{a}tzle in \cite{KuwertSchaetzleGradientFlowWillmoreFunctional} in order to derive local integral Bernstein-Bando-Shi-type smoothing estimates. Then, we use a blowup argument adapted from Streets \cite{StreetsLongTimeBehaviorOfFourthOrderCurvatureFlows} in order to convert the integral smoothing estimates to pointwise smoothing estimates, as stated in the following theorem. During the proof, we use the local integral smoothing estimates to take a local subsequential limit of the renormalized metrics.
\begin{theorem}  \label{thm:C^m smoothing estimate along AOF}
 Let $m\geq 0$ and $n\geq 4$. There exists a constant $C=C(m,n)$ so that if $(M^n,g(t))$ is a complete solution to AOF on $[0,T]$ satisfying
 \begin{equation*}
  \max\left(1,\sup_{M\x[0,T]}|\Rm|\right)\leq K,
 \end{equation*}
 then for all $t\in(0,T]$,
 \begin{equation*} \label{eqn:C^m smoothing estimate along AOF:smoothing inequality}
  \sup_M|\nabla^m\Rm|_{g(t)}\leq C\left(K+\frac{1}{t^\frac{2}{n}}\right)^{1+\frac{m}{2}}.
 \end{equation*}
\end{theorem}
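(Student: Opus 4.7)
The plan is to prove the theorem by contradiction via a parabolic blow-up, adapting Streets' approach in \cite{StreetsLongTimeBehaviorOfFourthOrderCurvatureFlows} to the order-$n$ setting. If the estimate fails for some $m$, I can extract complete solutions $(M_k^n,g_k(t))$ to AOF on $[0,T_k]$ with $\sup_{M_k\times[0,T_k]}|\Rm|_{g_k}\le K_k$, together with times $t_k\in(0,T_k]$, points $x_k\in M_k$, and constants $C_k\to\infty$ such that
\[
 |\nabla^m\Rm|_{g_k(t_k)}(x_k)\;\ge\; C_k\bigl(K_k+t_k^{-2/n}\bigr)^{1+m/2}.
\]
A point-picking argument adapted to the order-$n$ parabolic scaling of AOF then lets me assume $(x_k,t_k)$ is essentially a near-maximum of an appropriate scale-invariant quantity over a parabolic neighborhood whose rescaled size tends to infinity.

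Next I carry out the parabolic rescaling dictated by the flow. Setting $\lambda_k=|\nabla^m\Rm|_{g_k(t_k)}(x_k)^{1/(m+2)}$ and $\tilde g_k(\tilde t)=\lambda_k^2\,g_k(t_k+\lambda_k^{-n}\tilde t)$, the family $\tilde g_k$ is again a solution to AOF, now on $[-\lambda_k^n t_k,\lambda_k^n(T_k-t_k)]$. By construction $|\widetilde\nabla^m\widetilde{\Rm}|_{\tilde g_k(0)}(x_k)=1$, while the contradiction hypothesis, together with the scaling of $(K+t^{-2/n})^{1+m/2}$, yields
\[
 \sup|\widetilde{\Rm}|_{\tilde g_k}\;\le\;\lambda_k^{-2}K_k\;\longrightarrow\;0 \qquad\text{and}\qquad \lambda_k^n t_k\;\longrightarrow\;\infty,
\]
so after rescaling the curvature tends uniformly to zero on longer and longer past time intervals.

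To extract a limit, I apply the local $L^2$ smoothing estimates established earlier in the paper to the $\tilde g_k$ on a fixed parabolic neighborhood of $(x_k,0)$; these provide uniform $L^2$ bounds on $|\widetilde\nabla^\ell\widetilde{\Rm}|_{\tilde g_k}$ for every $\ell$, which Sobolev embedding upgrades to uniform pointwise $C^\ell$ control on $\widetilde{\Rm}$ in a slightly smaller neighborhood. Combined with the uniform decay $|\widetilde{\Rm}|_{\tilde g_k}\to 0$ and an injectivity radius lower bound at $x_k$ for $\tilde g_k(0)$, I invoke the compactness theorem from the paper to pass to a pointed subsequential limit $(M_\infty,g_\infty(t),x_\infty)$. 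The decay of curvature forces $\Rm_{g_\infty}\equiv 0$ and hence $\nabla^m\Rm_{g_\infty}\equiv 0$, while $C^m$ convergence gives $|\nabla^m\Rm_{g_\infty}|_{g_\infty(0)}(x_\infty)=1$, a contradiction.

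The main obstacle is the compactness step. I must convert the local $L^2$ integral smoothing estimates into enough pointwise $C^\ell$ control to take a sensible limit, and simultaneously secure a positive lower bound on the injectivity radius of $\tilde g_k(0)$ at $x_k$ so that the compactness theorem applies; this injectivity-radius control is not immediate from the rescaled curvature bounds alone and requires some care, especially since $t_k$ may tend to zero at a controlled rate (making the $t_k^{-2/n}$ term dominant in the scaling factor $\lambda_k$). Managing this interplay between the order-$n$ parabolic scaling, the $L^2$ smoothing estimates, and the compactness theorem is the technical heart of the argument.
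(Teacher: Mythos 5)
Your overall strategy---blow up by contradiction at a near-maximum of a scale-invariant quantity, rescale so the curvature tends to zero while a normalized $m$-th derivative stays equal to one, and derive a contradiction from a flat limit carrying a unit-norm derivative of curvature---is the same as the paper's, and your parabolic scaling $\tilde g_k=\lambda_k^2\,g_k(t_k+\lambda_k^{-n}\tilde t)$ matches the paper's $\widetilde g_i=\lambda_i g_i(t_i+\lambda_i^{-n/2}t)$ after identifying $\lambda_i$ with your $\lambda_k^{2}$. However, the compactness step as you describe it has a genuine gap, in two respects. First, invoking Theorem \ref{thm: cptness for seq of cpt mfds} here is circular: the proof of that theorem begins by applying the pointwise smoothing estimates of Theorem \ref{thm:C^m smoothing estimate along AOF}, i.e.\ the very statement you are trying to prove. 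Second, and independently, the injectivity radius lower bound you would need at $x_k$ is simply not available: nothing prevents the rescaled solutions from collapsing with bounded curvature (indeed, the point of Theorem \ref{thm: obstruction to long time existence} is that the smoothing estimates hold with no noncollapsing or Sobolev-constant hypothesis), so this is not merely a matter of ``care'' but an obstruction to the route you propose.

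The paper avoids both problems by never taking a global Cheeger--Gromov limit. It pulls the rescaled metrics back by $\exp_{x_i}$ (with respect to $\widetilde g_i(0)$) to the fixed Euclidean ball $B(0,1)$; since $\exp_{x_i}$ is an immersion on a ball of definite size once the curvature is bounded, no injectivity radius bound is needed. On that ball, the local $L^2$ estimates of Proposition \ref{prop: control of nabla^k Rm using cutoff function at one time}, the uniform equivalence of the pulled-back metrics with the Euclidean metric (hence a uniform Sobolev constant), the Kondrakov embedding, and the Taylor expansion of the metric in geodesic coordinates together give uniform $C^\infty$ control of the metrics $h_i(0)$, and the Arzel\`a--Ascoli-type Proposition \ref{prop: arzela-ascoli for metrics} then produces a local $C^\infty$ limit on $B(0,\frac{1}{2})$, which suffices for the contradiction. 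A further point you should adjust: to run the point-picking and to satisfy the hypotheses of Proposition \ref{prop: control of nabla^k Rm using cutoff function at one time} you need uniform control of \emph{all} derivatives $\nabla^j\Rm$ for $j\le\frac{3n}{2}-3$ on the rescaled solutions, not just of $\nabla^m\Rm$; this is why the paper maximizes the full quantity $f_m=\sum_{j=1}^m|\nabla^j\Rm|^{2/(j+2)}$ with $m\ge\frac{3n}{2}-3$ rather than the single term you normalize by.
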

We obtain from the pointwise smooting estimates two additional theorems. The first theorem gives an obstruction to the long time existence of the flow. Since the pointwise smoothing estimates do not require that the Sobolev constant be bounded on $[0,T)$, we rule out that the manifold collapses with bounded curvature.
\begin{theorem} \label{thm: obstruction to long time existence}
 Let $g(t)$ be a solution to the AOF on a compact manifold $M$ that exists on a maximal time interval $[0,T)$ with $0<T\leq\infty$. If $T<\infty$, then we must have
 \begin{equation*}
  \limsup_{t\uparrow T}\|\Rm\|_{C^0(g(t))}=\infty.
 \end{equation*}
\end{theorem}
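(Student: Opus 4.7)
The plan is to prove this by contrapositive: assuming uniformly bounded curvature on $[0,T)$ with $T<\infty$, I will extend the flow past $T$, contradicting maximality. Suppose $\sup_{t\in[0,T)}\|\Rm\|_{C^0(g(t))}\leq K$.

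First, I apply Theorem \ref{thm:C^m smoothing estimate along AOF} on time intervals of the form $[t_0,t_0+\tau]\subset[0,T)$ with $\tau$ fixed, say $\tau=T/4$. This gives, for each $m\geq 0$, a constant $C_m=C_m(K,T,n)$ such that
\begin{equation*}
\sup_M|\nabla^m\Rm|_{g(t)}\leq C_m\qquad\text{for all }t\in[T/2,T).
\end{equation*}

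Next, I promote these intrinsic curvature bounds to uniform $C^\infty$-bounds on $g(t)$ in a fixed background atlas. Because $\partial_t g$ is a polynomial contraction of $\Ob$ and $\Delta^{n/2-1}R$, both controlled by $|\nabla^{n-2}\Rm|_{g(t)}$, we have $|\partial_t g|_{g(t)}\leq C$ uniformly on $[T/2,T)$. This gives uniform equivalence of the metrics $g(t)$ for $t\in[T/2,T)$ to $g(T/2)$, via the standard estimate $e^{-C(t-T/2)}g(T/2)\leq g(t)\leq e^{C(t-T/2)}g(T/2)$. Combined with uniform bounds on $|\nabla^m\Rm|$, a standard argument (using a fixed covering of $M$ by charts and iterating bounds on Christoffel symbols as in Hamilton's work on Ricci flow, e.g.\ in \cite{HamiltonCptnessPropertyForSolutionsOfTheRicciFlow}) then yields bounds $\|g(t)\|_{C^m}\leq C_m'$ in each chart, for all $m$. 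Since $\partial_t g$ together with all its spatial derivatives is similarly bounded, the family $\{g(t)\}_{t\in[T/2,T)}$ is Cauchy in $C^m$ for every $m$, and hence converges in $C^\infty$ to a smooth Riemannian metric $g(T)$ on $M$.

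I then invoke the short time existence theorem of Bahuaud-Helliwell \cite{BahuaudHelliwellExistence} with initial metric $g(T)$: this produces a solution $\tilde g(t)$ of AOF on $[T,T+\epsilon)$ for some $\epsilon>0$ with $\tilde g(T)=g(T)$. Concatenating $g(t)$ on $[0,T)$ with $\tilde g(t)$ on $[T,T+\epsilon)$ yields a $C^\infty$ solution of AOF extending past $T$, which contradicts the maximality of $[0,T)$.

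The main obstacle is the promotion of intrinsic curvature derivative bounds to $C^\infty$ bounds on the metric components sufficient to guarantee a smooth limit $g(T)$ and to apply short-time existence with $g(T)$ as initial data. The intrinsic smoothing estimates from Theorem \ref{thm:C^m smoothing estimate along AOF} control $|\nabla^m\Rm|_{g(t)}$ but not directly the coordinate derivatives of $g(t)$; bridging the gap requires uniform equivalence of metrics plus the standard bootstrap from curvature bounds to metric-coefficient bounds. Everything else in the argument is essentially the same continuation scheme that is used throughout the theory of geometric flows.
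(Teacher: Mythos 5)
Your proposal is correct and follows essentially the same route as the paper: assume a uniform curvature bound, use the smoothing estimates of Theorem \ref{thm:C^m smoothing estimate along AOF} to bound all $|\nabla^m\Rm|$, promote these to coordinate $C^\infty$ bounds on $g(t)$ and $\Obhat$ so that $g(t)\to g(T)$ smoothly, and then restart the flow from $g(T)$ via the short time existence theorem to contradict maximality. The only difference is that the paper carries out the ``standard bootstrap'' from covariant curvature bounds to coordinate derivative bounds explicitly (Lemma \ref{lem: partial derivatives of g and Obhat bdd wrt covariant derivatives of Rm}), whereas you cite it as standard.
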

The second theorem allows us to extract convergent subsequences from a sequence of solutions to AOF with uniform $C^0$ curvature bound and uniform injectivity radius lower bound. We prove this in section \ref{sec: compactness of solutions} by using the Cheeger-Gromov compactness theorem to obtain subsequential convergence of solutions at one time. Then, after extending estimates on the covariant derivatives of the metrics from one time to the entire time interval, we obtain subseqential convergence over the entire time interval.
\begin{theorem} \label{thm: cptness for seq of cpt mfds}
 Let $\{(M_k^n,g_k(t),O_k)\}_{k\in\N}$ be a sequence of complete pointed solutions to AOF for $t\in(\alpha,\omega)$, with $t_0\in(\alpha,\omega)$, such that
 \begin{enumerate}
  \item $|\Rm(g_k)|_{g_k} \leq C_0$ on $M_k\times(\alpha,\omega)$ for some constant $C_0<\infty$ independent of $k$
  \item $\injrad_{g_k(t_0)}(O_k)\geq\iota_0$ for some constant $\iota_0>0$.
 \end{enumerate}
  Then there exists a subsequence $\{j_k\}_{k\in\N}$ such that $\{(M_{j_k},g_{j_k}(t),O_{j_k})\}_{k\in\N}$ converges  in the sense of families of pointed Riemannian manifolds to a complete pointed solution to AOF $(M_\infty^n,g_\infty(t),O_\infty)$ defined for $t\in(\alpha,\omega)$ as $k\to\infty$.
\end{theorem}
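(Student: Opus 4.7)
The plan is to combine Theorem \ref{thm:C^m smoothing estimate along AOF} with a Hamilton-style Cheeger-Gromov compactness argument in three phases: first, promote the uniform $C^0$ curvature bound to uniform bounds on all covariant derivatives of curvature over any compact subinterval of $(\alpha,\omega)$; second, extract a Cheeger-Gromov limit at the single time $t_0$; third, upgrade this static convergence to $C^\infty$ convergence in space and time.

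For the first phase, I would fix a compact subinterval $[\alpha',\omega']\subset(\alpha,\omega)$ with $t_0\in[\alpha',\omega']$, choose $\alpha''\in(\alpha,\alpha')$, and apply Theorem \ref{thm:C^m smoothing estimate along AOF} to each flow $g_k$ restricted to $[\alpha'',\omega']$. This produces uniform-in-$k$ bounds
\begin{equation*}
\sup_{M_k\x[\alpha',\omega']}|\nabla^m\Rm(g_k)|_{g_k}\leq C_m
\end{equation*}
for every $m\geq 0$, with $C_m$ depending only on $m$, $n$, $C_0$, and $\alpha'-\alpha''$. At $t=t_0$ these bounds together with the hypothesis $\injrad_{g_k(t_0)}(O_k)\geq\iota_0$ place us in the setting of the classical pointed Cheeger-Gromov compactness theorem for Riemannian manifolds. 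After passing to a subsequence I obtain a complete pointed manifold $(M_\infty,O_\infty)$, an exhaustion $\{U_\ell\}$ of $M_\infty$ by relatively compact open sets containing $O_\infty$, and smooth pointed embeddings $\varphi_{k,\ell}\colon U_\ell\to M_k$ such that $\varphi_{k,\ell}^*g_k(t_0)\to g_\infty(t_0)$ in $C^\infty$ on compact subsets of $U_\ell$.

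The main obstacle is the third phase, since the flow is of order $n$ and no simple maximum principle is available to compare metrics at different times. Set $\tilde g_k(t):=\varphi_{k,\ell}^*g_k(t)$ on $U_\ell\x[\alpha',\omega']$. The AOF equation expresses $\partial_t g_k$ as a universal polynomial in $g_k$, its inverse, and $\Rm(g_k),\nabla\Rm(g_k),\ldots,\nabla^{n-2}\Rm(g_k)$, so the curvature bounds from phase one yield a uniform-in-$k$ pointwise bound on $|\partial_t\tilde g_k|$ measured in $\tilde g_k(t_0)$. Integrating in $t$ establishes the uniform equivalence $C^{-1}\tilde g_k(t_0)\leq\tilde g_k(t)\leq C\tilde g_k(t_0)$ on $U_\ell\x[\alpha',\omega']$. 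Commuting $\partial_t$ with covariant derivatives and iterating then produces uniform-in-$k$ bounds on $|\partial_t^q\nabla^p\tilde g_k|_{\tilde g_k(t_0)}$ for all $p,q\geq 0$. Because $\tilde g_k(t_0)\to g_\infty(t_0)$ smoothly, the same bounds hold relative to the fixed background $g_\infty(t_0)$, so Arzel\`a-Ascoli extracts a further subsequence converging in $C^\infty$ on compact subsets of $U_\ell\x[\alpha',\omega']$. A standard diagonal argument over an exhaustion of $M_\infty$ by the $U_\ell$ and an exhaustion of $(\alpha,\omega)$ by intervals $[\alpha',\omega']$ then produces $\tilde g_k\to g_\infty$ in $C^\infty$ on compact subsets of $M_\infty\x(\alpha,\omega)$. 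The smooth convergence forces the limit $g_\infty$ to satisfy AOF; completeness of $g_\infty(t_0)$ is part of the Cheeger-Gromov output, and completeness at other times follows from the uniform metric equivalence along the flow.
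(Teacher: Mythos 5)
Your proposal is correct and follows essentially the same route as the paper: smoothing estimates from Theorem \ref{thm:C^m smoothing estimate along AOF} on compact subintervals, Cheeger--Gromov extraction at the single time $t_0$, and then the promotion of time-$t_0$ metric bounds to uniform bounds on $|\partial_t^q\nabla^p\tilde g_k|$ over the whole interval (which is precisely Proposition \ref{prop: extend convergence of metrics over time interval} in the paper, proved there by integrating $\partial_t\log g_k(V,V)$ and an induction with the telescoping identity for $\nabla^N-\nabla_k^N$), followed by Arzel\`a--Ascoli and a diagonal argument.
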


We use this compactness theorem to prove two corollaries. For a compact Riemannian manifold $(M,g)$, let $C_S(M,g)$ denote the $L^2$ Sobolev constant of $(M,g)$, defined as the smallest constant $C_S$ such that
\begin{equation*}
\|f\|_{L^{\frac{2n}{n-2}}}^2\leq C_S\big(\|\nabla f\|_{L^2}^2+V^{-\frac{2}{n}}\|f\|_{L^2}^2\big),
\end{equation*}
where $V=\vol(M,g)$. The following result states that if the Sobolev constant and the integral of $Q$-curvature are bounded along the flow, there exists a sequence of renormalized solutions to AOF that converge to a singularity model.  
\begin{theorem}  \label{thm: existence of singularity model at curvature singularity}
 Let $(M^n,g(t))$, $n\geq 4$, be a compact solution to AOF that exists on a maximal time interval $[0,T)$. Suppose that $\sup\{C_S(M,g(t)):t\in[0,T)\}<\infty$. Let $\{(x_i,t_i)\}_{i\in\N}\subset M\x[0,T)$ be a sequence of points satisfying $t_i\to T$, $|\Rm(x_i,t_i)|=\sup\{|\Rm(x,t)|:(x,t)\in M\x[0,t_i]\}$, and $\lambda_i\to\infty$, where $\lambda_i=|\Rm(x_i,t_i)|$. Then the sequence of pointed solutions to AOF given by $\{(M,g_i(t),x_i)\}_{i\in\N}$, with
 \begin{equation*}
  g_i(t)=\lambda_i g(t_i+\lambda_i^{-\frac{n}{2}}t),\quad t\in[-\lambda_i^{\frac{n}{2}}t_i,0]
 \end{equation*}
 subsequentially converges in the sense of families of pointed Riemannian manifolds to a nonflat, noncompact complete pointed solution $(M_\infty,g_\infty(t),x_\infty)$ to AOF defined for $t\in(-\infty,0]$. Moreover, if $n=4$ or
\begin{equation*}
\sup_{t\in[0,T)}\int_M Q(g(t))\,dV_{g(t)}<\infty,
\end{equation*}
then $\Ob(g_\infty(t))\equiv 0$ for all $t\in(-\infty,0]$.
\end{theorem}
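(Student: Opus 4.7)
My plan is a parabolic blow-up argument: extract a Cheeger--Gromov subsequential limit using the scale invariance of AOF and Theorem~\ref{thm: cptness for seq of cpt mfds}, verify the limit is nonflat and noncompact, and finally conclude $\Ob(g_\infty)\equiv 0$ from a monotonicity formula for $\mathcal{F}_Q^n$.

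For convergence, I would first verify that AOF is invariant under the parabolic scaling $g\mapsto\lambda g$, $t\mapsto\lambda^{-n/2}t$: both $\Ob$ and $(\Delta^{\frac{n}{2}-1}R)g$ are homogeneous of degree $(2-n)/2$ under constant rescalings of the metric, so the equation for $\partial_t g$ is preserved when time is rescaled by $\lambda^{n/2}$. Hence each $g_i(t)$ solves AOF on $[-\lambda_i^{n/2}t_i,0]$, and this interval exhausts $(-\infty,0]$ because $t_i\to T>0$ and $\lambda_i\to\infty$. Since $|\Rm|$ scales as $\lambda^{-1}$, the maximality of $(x_i,t_i)$ gives $|\Rm(g_i)|_{g_i}\leq 1$ throughout the interval, with equality at $(x_i,0)$. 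The Sobolev constant $C_S$ as defined is scale invariant (each side of the defining inequality scales as $\lambda^{(n-2)/2}$), so $C_S(M,g_i(t))$ is uniformly bounded. Combined with $|\Rm|\leq 1$, the bounded Sobolev constant yields volume non-collapsing $\vol(B_r(x_i))\geq c r^n$ for $r$ up to a definite scale (via a standard bump-function test in the Sobolev inequality), and Cheeger--Gromov--Taylor then produces a uniform positive lower bound on $\injrad_{g_i(0)}(x_i)$. Theorem~\ref{thm: cptness for seq of cpt mfds} supplies the desired subsequential Cheeger--Gromov limit $(M_\infty,g_\infty(t),x_\infty)$ solving AOF on $(-\infty,0]$; continuity of $|\Rm|$ under $C^\infty$ convergence forces $|\Rm(g_\infty(0))|(x_\infty)=1$, so $g_\infty$ is nonflat.

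For noncompactness, I would use that AOF preserves volume. Since $\tr_g\Ob=0$, the trace $\tr_g\partial_t g$ reduces to a constant multiple of $\Delta^{\frac{n}{2}-1}R$, which integrates to zero on a closed $M$ for $n\geq 4$ (as $\Delta^{\frac{n}{2}-1}R=\Delta(\Delta^{\frac{n}{2}-2}R)$); thus $\vol(M,g(t))\equiv V_0$ for some $V_0>0$. Then $\vol(M,g_i(0))=\lambda_i^{n/2}V_0\to\infty$, and if $M_\infty$ were compact, Cheeger--Gromov convergence would force the finite quantity $\vol(M_\infty,g_\infty(0))$ to equal $\infty$, a contradiction.

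For $\Ob(g_\infty)\equiv 0$, I would exploit the variational structure. Because $\Ob$ is the $L^2$ gradient of $\mathcal{F}_Q^n$ and is trace-free, the conformal term contributes nothing to $\frac{d}{dt}\mathcal{F}_Q^n$, and a direct computation yields a monotonicity formula of the form $\frac{d}{dt}\mathcal{F}_Q^n(g(t))=c\int_M|\Ob|^2\,dV_{g(t)}$ with $c$ a nonzero constant whose sign is such that the one-sided bound on $\mathcal{F}_Q^n$ available under the hypotheses (Chern--Gauss--Bonnet $\mathcal{F}_Q^4=8\pi^2\chi(M)-\tfrac{1}{4}\mathcal{F}_W^4\leq 8\pi^2\chi(M)$ when $n=4$, the assumption otherwise) gives $\int_0^T\int_M|\Ob|^2\,dV\,dt<\infty$. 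A direct check shows this spacetime integral is scale invariant (the $\lambda^{-n/2}$ from $|\Ob|^2\,dV$ cancels the $\lambda^{n/2}$ from $dt$), so for each $A>0$,
\begin{equation*}
\int_{-A}^0\int_M|\Ob(g_i(t))|^2\,dV_{g_i(t)}\,dt=\int_{t_i-A\lambda_i^{-n/2}}^{t_i}\int_M|\Ob(g)|^2\,dV\,dt\longrightarrow 0
\end{equation*}
as $i\to\infty$, by absolute continuity of the finite total integral. Smooth Cheeger--Gromov convergence of $g_i$ to $g_\infty$ on compact subsets passes this bound to the limit via Fatou, giving $\Ob(g_\infty)\equiv 0$ on $M_\infty\times[-A,0]$ for every $A>0$, hence on $M_\infty\times(-\infty,0]$. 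The hardest step I anticipate is the injectivity radius lower bound at $x_i$: the Sobolev bound alone does not preclude collapse of $(M,g_i(0))$, and only after combining it with the $|\Rm|\leq 1$ bound on the rescaled metrics does one obtain the volume non-collapsing needed to apply Cheeger--Gromov--Taylor.
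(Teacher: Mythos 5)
Your proposal is correct and follows essentially the same route as the paper: parabolic rescaling and scale invariance of AOF, scale invariance of the Sobolev constant plus volume non-collapsing and Cheeger--Gromov--Taylor for the injectivity radius bound, Theorem~\ref{thm: cptness for seq of cpt mfds} with a diagonal argument for the limit, volume preservation for noncompactness, and the monotonicity $\tfrac{d}{dt}\int_M Q=\tfrac{n-2}{2}\int_M|\Ob|^2$ combined with the upper bound on $\int_M Q$ to get $\int_0^T\int_M|\Ob|^2<\infty$ and hence obstruction-flatness of the limit. The only minor difference is that you deduce $\int_{t_i-A\lambda_i^{-n/2}}^{t_i}\int_M|\Ob|^2\,dV\,dt\to 0$ directly from absolute continuity of the finite spacetime integral over intervals shrinking toward $T$, whereas the paper first passes to a further subsequence making these time intervals pairwise disjoint and uses that the terms of a convergent series tend to zero; your version is slightly cleaner and avoids the extra subsequence.
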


The next result states that if a nonsingular solution to AOF does not collapse at time $\infty$ and the integral of $Q$-curvature is bounded along the flow, there exists a sequence of times $t_i\to\infty$ for which $g(t_i)$ converges to an obstruction flat metric. We note that in cases (2) and (3), the boundedness of the integral of the $Q$ curvature along the flow implies that $g_\infty(t)$ is obstruction flat. However, this does not imply that $\dif_t g_\infty=0$. Rather, $\dif_t g_\infty=(-1)^{n/2}C(n)(\Delta^{\frac{n}{2}-1}R)g$, i.e. the metric is still flowing by the conformal term of AOF within the conformal class of $g_\infty(0)$.
\begin{theorem} \label{thm: nonsingular limit at infinity}
Let $(M,g(t))$ be a compact solution to AOF on $[0,\infty)$ such that
\begin{equation*}
\sup_{t\in[0,\infty)}\|\Rm\|_{C^0(g(t))}<\infty.
\end{equation*}
Then exactly one of the following is true:
\begin{enumerate}
\item $M$ collapses when $t=\infty$, i.e.
\begin{equation*}
\lim_{t\to\infty}\inf_{x\in M}\injrad_{g(t)}(x)=0.
\end{equation*}
\item There exists a sequence $\{(x_i,t_i)\}_{i\in\N}\subset M\x[0,\infty)$ such that the sequence of pointed solutions to AOF given by $\{(M,g_i(t),x_i)\}_{i\in\N}$, with
 \begin{equation*}
  g_i(t)=g(t_i+t),\quad t\in[-t_i,\infty)
 \end{equation*}
 subsequentially converges in the sense of pointed Riemannian manifolds to a complete noncompact finite volume pointed solution $(M_\infty,g_\infty(t),x_\infty)$ to AOF defined for $t\in(-\infty,\infty)$.  If $n=4$ or
\begin{equation*}
\sup_{t\in[0,\infty)}\int_M Q(g(t))\,dV_{g(t)}<\infty,
\end{equation*}
then $g_\infty(t)$ is obstruction flat for all $t\in(-\infty,\infty)$.
\item There exists a sequence $\{(x_i,t_i)\}_{i\in\N}\subset M\x[0,\infty)$ such that the sequence of pointed solutions to AOF given by $\{(M,g_i(t),x_i)\}_{i\in\N}$, with
 \begin{equation*}
  g_i(t)=g(t_i+t),\quad t\in[-t_i,\infty)
 \end{equation*}
 subsequentially converges in the sense of pointed Riemannian manifolds to a compact pointed solution $(M_\infty,g_\infty(t),x_\infty)$ to AOF defined for $t\in(-\infty,\infty)$, where $M_\infty$ is diffeomorphic to $M$. If $n=4$ or
\begin{equation*}
\sup_{t\in[0,\infty)}\int_M Q(g(t))\,dV_{g(t)}<\infty,
\end{equation*}
then $g_\infty(t)$ is obstruction flat for all $t\in(-\infty,\infty)$ and there exists a family of metrics $\hat{g}_\infty(t)$ conformal to $g_\infty(t)$ for all $t\in(-\infty,\infty)$, with $\hat{g}_\infty(t)=\hat{g}_\infty(0)$ for all $t\in(-\infty,\infty)$, such that $\hat{g}_\infty(0)$ is obstruction flat and has constant scalar curvatue.
\end{enumerate}
\end{theorem}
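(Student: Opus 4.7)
The plan is to apply Theorem~\ref{thm: cptness for seq of cpt mfds} to the time-translated solutions $g_i(t) = g(t_i + t)$ and then classify the limit by compactness and by obstruction flatness. First I would observe that AOF preserves the volume of a closed manifold: $\Ob$ is tracefree (as the $L^2$ gradient of a pointwise-conformally-invariant functional), so the only contribution to $\partial_t dV$ comes from the conformal term, and $\int_M \Delta^{n/2-1}R\,dV = 0$ by the divergence theorem. Hence $\vol(M,g(t)) \equiv V$. If $\inf_{x \in M}\injrad_{g(t)}(x) \to 0$ as $t \to \infty$ we are in case~(1); otherwise I select $t_i \to \infty$ and basepoints $x_i \in M$ with $\injrad_{g(t_i)}(x_i) \geq \iota_0 > 0$.

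With the uniform curvature hypothesis, the translated sequence $(M, g_i(t), x_i)$ satisfies the hypotheses of Theorem~\ref{thm: cptness for seq of cpt mfds} on any subinterval of $(-\infty,\infty)$, so a diagonal subsequence converges to a complete pointed AOF solution $(M_\infty, g_\infty(t), x_\infty)$ for $t \in \R$. If $M_\infty$ is compact, Cheeger-Gromov convergence of closed manifolds to a closed limit gives $M_\infty \cong M$ for large $i$, placing us in the framework of case~(3); if $M_\infty$ is noncompact, volume lower semicontinuity combined with $\vol(M, g_i(0)) = V$ yields $\vol(M_\infty, g_\infty(0)) \leq V < \infty$, placing us in case~(2).

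To verify obstruction flatness of $g_\infty(t)$ under the dimensional or $Q$-curvature hypothesis, I would exploit the gradient-flow structure. Because $\Ob$ is tracefree and is the $L^2$ gradient of $\mathcal F_Q^n$, the conformal term of the flow contributes nothing to the energy identity and
\begin{equation*}
\frac{d}{dt}\mathcal F_Q^n(g(t)) = \int_M \langle \Ob, \partial_t g\rangle\,dV_{g(t)} = (-1)^{n/2}\int_M |\Ob|^2\,dV_{g(t)}.
\end{equation*}
When $n = 4$, the Chern-Gauss-Bonnet identity $\mathcal F_Q^4 = 8\pi^2\chi(M) - \tfrac 14\int |W|^2\,dV$ combined with the uniform $C^0$ curvature bound and constant volume shows $\mathcal F_Q^4$ is automatically bounded on $[0,\infty)$; for $n \geq 6$ this is the standing hypothesis. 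Monotonicity and boundedness together force $\int_0^\infty \int_M |\Ob|^2\,dV\,dt < \infty$. The pointwise smoothing estimates of Theorem~\ref{thm:C^m smoothing estimate along AOF} control every covariant derivative of $\Rm$ uniformly in space-time, and in particular bound $\partial_t \int_M |\Ob|^2\,dV$, so $\int_M |\Ob(g(s))|^2\,dV_{g(s)} \to 0$ as $s \to \infty$. Uniform $C^1$ control on $\Ob$ together with Cheeger-Gromov convergence of $(M, g_i(t), x_i)$ then propagate this to the limit and give $\Ob(g_\infty(t)) \equiv 0$ for every $t$.

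In case~(3), once $g_\infty$ is obstruction flat the AOF equation reduces on $g_\infty$ to the pointwise conformal evolution $\partial_t g_\infty = \tfrac{(-1)^{n/2}}{2(n-1)(n-2)}(\Delta^{n/2-1}R_{g_\infty})\,g_\infty$, so the trajectory stays inside the single conformal class $[g_\infty(0)]$. I would then pick $\hat g_\infty(0)$ to be a Yamabe representative of constant scalar curvature in this class and define $\hat g_\infty(t) \equiv \hat g_\infty(0)$; conformal covariance of $\Ob$ in even dimension guarantees $\Ob(\hat g_\infty(0)) = 0$ as well. The main obstacle I expect is the step in paragraph three of upgrading the $L^2$-in-time integrability $\int_0^\infty \int_M |\Ob|^2\,dV\,dt < \infty$ to genuine $C^0$ vanishing of $\Ob$ uniformly in the base-point sequence: this requires carefully feeding the smoothing estimates into a uniform time-derivative bound on $\int_M |\Ob|^2\,dV$ to rule out high-frequency oscillations and then transferring the pointwise decay through the Cheeger-Gromov convergence.
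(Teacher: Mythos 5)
Your proposal follows the same architecture as the paper's proof: volume preservation from tracelessness of $\Ob$ plus the divergence theorem, the collapse dichotomy, Theorem \ref{thm: cptness for seq of cpt mfds} with a diagonal argument over expanding time intervals, monotonicity of $\int_M Q$ to get $\int_0^\infty\int_M|\Ob|^2<\infty$, and the Yamabe problem plus conformal covariance of $\Ob$ in case (3). The one step where you genuinely diverge is the passage from spacetime integrability of $|\Ob|^2$ to $\Ob(g_\infty)\equiv 0$: you propose the standard ``$f\geq 0$ integrable with $f'$ bounded implies $f\to 0$'' argument applied to $f(t)=\int_M|\Ob(g(t))|^2\,dV$, using the smoothing estimates to bound $f'$. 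The paper instead passes to a sparse further subsequence of the $t_i$ so that the windows $[t_i-k,t_i+k]$ are pairwise disjoint, whence $\sum_i\int_{t_i-k}^{t_i+k}\int_M|\Ob|^2<\infty$ forces each term to zero, and $C^\infty$ convergence on compacts transfers this to the limit. Your route is viable and even gives the stronger conclusion that $\int_M|\Ob(g(s))|^2\,dV\to 0$ along the full time ray, at the cost of the extra derivative bound you correctly identify as the delicate point; the paper's trick avoids that bound entirely. One genuine error to fix: the first variation formula is $\frac{d}{dt}\int_M Q=(-1)^{n/2}\frac{n-2}{2}\int_M\langle\Ob,\partial_t g\rangle=\frac{n-2}{2}\int_M|\Ob|^2\geq 0$; you dropped the Graham--Hirachi prefactor and wrote $(-1)^{n/2}\int_M|\Ob|^2$, which for $n\equiv 2\pmod 4$ would make $\int_M Q$ nonincreasing, and then the hypothesis $\sup_t\int_M Q<\infty$ (an upper bound) would not yield the finiteness of $\int_0^\infty\int_M|\Ob|^2$. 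With the correct sign the functional is nondecreasing and the stated hypothesis is exactly what is needed.
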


\section{Background} \label{sec: background}
\subsection{$Q$ Curvature}
Here we recall a description of $Q$ curvature given by Chang et al. in \cite{ChangWhatIsQCurvature}. The $Q$ curvature was introduced in 4 dimensions by Riegert in \cite{RiegertNonlocalActionForTheTraceAnomaly} and Branson-\O rsted in \cite{BransonOrstedExplicitFunctionalDeterminantsInFourDimensions} and in even dimensions by Branson in \cite{BransonFunctionalDeterminant}. It is a scalar quantity defined on an even dimensional Riemannian manifold $(M^n,g)$. If $n=2$, we define $Q$ to be $Q=-\tfrac{1}{2}R=-K$, where $K$ is the Gaussian curvature of $M$. The Gauss-Bonnet theorem gives $\int Q\, dV=-2\pi\chi(M)$. The $Q$ curvature of a metric $\tilde{g}=e^{2f}g$ is given by $e^{2f}\widetilde{Q}=Q+\mathscr{P}f$, where the Paneitz operator $\mathscr{P}$ introduced by Graham-Jenne-Mason-Sparling in \cite{GrahamEtAlConformallyInvariantPowersOfTheLaplacianI_Existence} is given by $\mathscr{P}f=\Delta f$. If $n=4$, we define $Q$ to be
\begin{equation*}
Q=-\tfrac{1}{6}\Delta R-\tfrac{1}{2}R^{ab}R_{ab}+\tfrac{1}{6}R^2.
\end{equation*}
The Chern-Gauss-Bonnet theorem gives
\begin{equation*}
\int Q\,dV=8\pi^2\chi(M)-\tfrac{1}{4}\int |W|^2.
\end{equation*}
In particular, if $M$ is conformally flat, then $\int Q\,dV=8\pi^2\chi(M)$. The $Q$ curvature of a metric $\tilde{g}=e^{2f}g$ is given by $e^{4f}\widetilde{Q}=Q+\mathscr{P}f$, where the Paneitz operator $\mathscr{P}$ is given by
\begin{equation*}
\mathscr{P}f=\nabla_a[\nabla^a\nabla^b+2R^{ab}-\tfrac{2}{3}Rg^{ab}]\nabla_b f.
\end{equation*}
In general when $n$ is even, we are only able to write down the highest order terms of $Q$ and $\mathscr{P}$:
\begin{gather*}
Q=-\tfrac{1}{n-2}\Delta^{\frac{n}{2}-1}R+\mathrm{lots},\quad \mathscr{P}f=\Delta^{\frac{n}{2}}f+\mathrm{lots}.
\end{gather*}
Nonetheless, $Q$ still has nice conformal properties. Under a conformal change of metric $\tilde{g}=e^{2f}g$, we have $e^{nf}\widetilde{Q}=Q+\mathscr{P}f$. The integral of $Q$ is conformally invariant. In particular, if $M$ is locally conformally flat, we have an analogue of the Gauss-Bonnet theorem:
\begin{equation*}
\int Q\,dV=(-1)^{\frac{n}{2}}(\tfrac{n}{2}-1)!\,2^{n-1}\pi^{\frac{n}{2}}\chi(M).
\end{equation*}

\subsection{Ambient Obstruction Tensor}
Fefferman and Graham proposed in \cite{FeffermanGrahamConformalInvariants} a method to determine the conformal invariants of a manifold from the pseudo-Riemannian invariants of an ambient space it is embedded into. They introduced the \textbf{ambient obstruction tensor} $\Ob$ as an obstruction to such an embedding. They subsequently provided a detailed description of the properties of $\Ob$ in their monograph \cite{FeffermanGrahamAmbientMetric}.

We define several tensors that we will use to express $\Ob$. The Schouten tensor $\mathsf{A}$, Cotton tensor $C$, and Bach tensor $B$ are defined as
\begin{equation*}
\mathsf{A}_{ij}=\tfrac{1}{n-2}\big(R_{ij}-\tfrac{1}{2(n-1)}Rg_{ij}\big),\quad C_{ijk}=\nabla_k\mathsf{A}_{ij}-\nabla_j\mathsf{A}_{ik},\quad B_{ij}=\nabla^k C_{ijk}-\mathsf{A}^{kl}W_{kijl}.
\end{equation*}
We obtain via the identity $\nabla^l\nabla^kW_{kijl}=(3-n)\nabla^kC_{ijk}$ that
\begin{equation*}
B_{ij}=\tfrac{1}{3-n}\nabla^l\nabla^kW_{kijl}+\tfrac{1}{2-n}R^{kl}W_{kijl}.
\end{equation*}
We define the notation $P_k^m(A)$ for a tensor $A$ by
\begin{equation*}
P_k^m(A)=\sum_{i_1+\cdots+i_k=m}\nabla^{i_1}A*\cdots*\nabla^{i_k}A.
\end{equation*}
The following result describes $\Ob$. The form of the lower order terms is implied by the proofs.
\begin{theorem}
(Fefferman-Graham \cite{FeffermanGrahamAmbientMetric}, Theorem 3.8; Graham-Hirachi \cite{GrahamHirachiAmbientOstructionTensorQ_Curvature}, Theorem 2.1) Let $n\geq 4$ be even. The obstruction tensor $\Ob_{ij}$ of $g$ is independent of the choice of ambient metric $\tilde{g}$ and has the following properties:
\begin{enumerate}
\item $\Ob$ is a natural tensor invariant of the metric $g$; ie. in local coordinates the components of $\Ob$ are given by universal polynomials in the components of $g$, $g^{-1}$, and the curvature tensor of $g$ and its covariant derivatives, and can be written just in terms of the Ricci curvature and its covariant derivatives. The expression for $\Ob_{ij}$ takes the form
\begin{align*}
\Ob_{ij} & =\Delta^{n/2-2}(\Delta\mathsf{A}_{ij}-\nabla_j\nabla_i{\mathsf{A}_k}^k)+\sum_{j=2}^{n/2}P_j^{n-2j}(\Rm) \\
& =\frac{1}{3-n}\Delta^{n/2-2}\nabla^l\nabla^kW_{kijl}+\sum_{j=2}^{n/2}P_j^{n-2j}(\Rm),
\end{align*}
where $\Delta=\nabla^i\nabla_i$ and $\mathrm{lots}$ denotes quadratic and higher terms in curvature involving fewer derivatives.
\item One has ${\Ob_i}^i=0$ and $\nabla^j\Ob_{ij}=0$.
\item $\Ob_{ij}$ is conformally invariant of weight $2-n$; ie. if $0<\Omega\in C^\infty(M)$ and $\hat{g}_{ij}=\Omega^2g_{ij}$, then $\hat{\Ob}_{ij}=\Omega^{2-n}\Ob_{ij}$.
\item If $g_{ij}$ is conformal to an Einstein metric then $\Ob_{ij}=0$.
\end{enumerate}
\end{theorem}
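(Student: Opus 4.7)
The plan is to deduce all four properties from the Fefferman--Graham ambient metric construction. Given $(M^n,g)$ with $n$ even, I would work on the ambient space $\widetilde{G}=\R_+\times M\times(-\epsilon,\epsilon)$ with coordinates $(t,x,\rho)$, considering metrics of the normal form
\begin{equation*}
\tilde{g}=2\rho\,dt^2+2t\,dt\,d\rho+t^2 g_\rho,
\end{equation*}
where $g_\rho$ is a one-parameter family of metrics on $M$ with $g_0=g$. Expanding the Ricci-flat equation $\widetilde{\Rc}(\tilde{g})=0$ as a formal power series in $\rho$ yields a recursive system determining $g^{(k)}:=\dif_\rho^k g_\rho|_{\rho=0}$ in terms of lower-order data. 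The leading term of the order-$k$ equation has the form $(n-2k)g^{(k)}_{ij}+(\text{Laplacian of }g^{(k-1)})+\text{lots}$, so the recursion solves uniquely for $k<n/2$. At $k=n/2$ the coefficient in front of $g^{(n/2)}$ vanishes, and the trace-free part of the equation becomes an obstruction tensor on $M$, which we define to be $\Ob_{ij}$.

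From this construction, property (1) is essentially built in: because $g^{(k)}$ is produced by universal polynomial operations in $g$, $g^{-1}$, and curvature derivatives of $g$, the same is true of $\Ob$; naturality under diffeomorphisms follows from the diffeomorphism equivariance of the whole construction. The explicit leading form $\Ob_{ij}=\tfrac{1}{3-n}\Delta^{n/2-2}\nabla^l\nabla^k W_{kijl}+\mathrm{lots}$ is then a matter of carefully iterating the recursion: each application of the inverse of the leading symbol of the obstruction equation introduces one power of $\Delta$, and the initial step produces $\Delta\mathsf{A}_{ij}-\nabla_i\nabla_j\mathsf{A}_k{}^k$, which after repeated commutation of derivatives and use of the contracted second Bianchi identity becomes $\tfrac{1}{3-n}\nabla^l\nabla^k W_{kijl}$ modulo lower-order terms; this is the step I expect to be the most computationally involved.

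For property (2), I would exploit that $\tilde{g}$ is homogeneous of degree $2$ under the dilations $\delta_s:(t,x,\rho)\mapsto(st,x,\rho)$ and that the contracted Bianchi identity $\widetilde{\nabla}^A(\widetilde{\Rc}_{AB}-\tfrac{1}{2}\widetilde{R}\tilde{g}_{AB})=0$ holds identically on $\widetilde{G}$. Expanding this identity in $\rho$ and extracting the coefficient of $\rho^{n/2-1}$, together with $\widetilde{\Rc}$-vanishing to order $n/2-1$, produces the two relations $\Ob_i{}^i=0$ and $\nabla^j\Ob_{ij}=0$ on the initial surface $\rho=0$. For property (3), if $\hat g=\Omega^2 g$, then the ambient metric $\hat{\tilde g}$ associated with $\hat g$ is obtained from $\tilde g$ by pulling back along the ambient diffeomorphism $\Phi(t,x,\rho)=(\Omega(x)^{-1}t,x,\rho\,\Omega(x)^2)$, which preserves the normal form up to relabelling of the zero locus; comparing the $\rho^{n/2}$ coefficients on the two sides and tracking the weight-$2$ homogeneity of $t$ yields $\hat\Ob_{ij}=\Omega^{2-n}\Ob_{ij}$.

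Finally for property (4), if $g$ is Einstein with $\Rc(g)=2\lambda(n-1)g$, then a direct verification shows that
\begin{equation*}
\tilde g = 2\rho\,dt^2+2t\,dt\,d\rho+t^2\bigl(1+\tfrac{\lambda}{2}\rho\bigr)^2 g
\end{equation*}
is Ricci flat on all of $\widetilde{G}$. Hence the recursion closes to all orders and $\Ob(g)=0$, and conformal covariance from property (3) then extends the vanishing to the full conformal class of any Einstein metric. The main conceptual obstacle is the careful extraction step in property (2): one must organize the ambient Bianchi expansion so that the $\rho^{n/2-1}$ coefficient produces a clean tensorial identity on $M$ rather than a relation tangled with undetermined higher-order terms in $g_\rho$, and this is where the choice of the normal form for $\tilde g$ and the restriction of $\widetilde{\Rc}$ to its independently determined part become essential.
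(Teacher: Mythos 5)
The paper does not prove this theorem: it is quoted as background, with attribution to Fefferman--Graham (\cite{FeffermanGrahamAmbientMetric}, Theorem 3.8) and Graham--Hirachi (\cite{GrahamHirachiAmbientOstructionTensorQ_Curvature}, Theorem 2.1), so there is no internal proof to compare your argument against. Your outline is essentially the argument of those references: put the ambient metric in normal form, solve $\widetilde{\Rc}(\tilde g)=0$ order by order in $\rho$, observe that the recursion degenerates exactly at order $n/2$ so that the trace-free part of the unsolvable coefficient defines $\Ob$; naturality and the leading term $\Delta\mathsf{A}_{ij}-\nabla_j\nabla_i{\mathsf{A}_k}^k=\tfrac{1}{3-n}\nabla^l\nabla^kW_{kijl}+\mathrm{lots}$ come from unwinding the recursion together with the contracted second Bianchi identity $\nabla^k\mathsf{A}_{ik}=\nabla_i{\mathsf{A}_k}^k$; the trace- and divergence-free properties come from the ambient contracted Bianchi identity; conformal covariance comes from homogeneity under the ambient dilations; and vanishing for (conformally) Einstein metrics comes from the explicit Ricci-flat ambient metric plus property (3). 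This is the correct route, and at the level of a sketch I see no conceptual gap.

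Two details would need care in a full write-up. First, the map $\Phi(t,x,\rho)=(\Omega(x)^{-1}t,x,\Omega(x)^2\rho)$ does \emph{not} preserve the normal form when $d\Omega\neq 0$: the ambient (equivalently Poincar\'e) normalization forces corrections in the $x$ and $\rho$ directions obtained from a further geodesic normalization, and only the leading behavior of the true diffeomorphism is the one you wrote. The homogeneity count giving the weight $2-n$ survives, but the intermediate claim as stated is not literally correct. Second, the normalizations in (4) are mismatched: with $\Rc(g)=2\lambda(n-1)g$ one has $\mathsf{A}=\lambda g$ and hence $g_\rho=(1+\lambda\rho)^2g$, whereas $(1+\tfrac{\lambda}{2}\rho)^2g$ corresponds to $\Rc(g)=\lambda(n-1)g$. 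This is a bookkeeping slip rather than a gap, but it would make the ``direct verification'' of Ricci-flatness fail as written.
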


C.R. Graham and K. Hirachi express the gradient of $Q$ in terms of $\Ob$:
\begin{theorem}
(\cite{GrahamHirachiAmbientOstructionTensorQ_Curvature}, Theorem 1.1) If $g(t)$ is a 1-parameter family of metrics on a compact manifold $M$ of even dimension $n\geq 4$ and $h=\partial_t|_{t=0}\, g(t)$, then
\begin{equation*}
\left.\frac{\partial}{\partial t}\right|_{t=0}\int_M Q(g(t))\,dV_{g(t)}=(-1)^\frac{n}{2}\frac{n-2}{2}\int_M\big\langle\Ob(g(0)),h\big\rangle\,dV_{g(0)}.
\end{equation*}
\end{theorem}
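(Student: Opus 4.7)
The plan is to combine a direct variational computation with the structural constraints forced by the symmetries of the functional, and then appeal to the Graham–Hirachi uniqueness of natural conformally covariant symmetric $2$-tensors to pin down the coefficient.

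First I would write the first variation in the form
$\frac{d}{dt}\big|_{t=0}\int_M Q(g(t))\,dV_{g(t)} = \int_M \langle \mathcal{G}(g),h\rangle_g\, dV_g$
for a natural symmetric $2$-tensor $\mathcal{G}(g)$; such a $\mathcal{G}$ exists by locally expanding $\dot Q$ and $\dot{dV}$ in $h,\nabla h,\ldots$ and integrating by parts. Two structural facts are then immediate. Diffeomorphism invariance of $\int Q\, dV$ (apply this to $h = \mathcal{L}_X g$ for arbitrary $X$) gives $\mathrm{div}_g \mathcal{G} = 0$. Conformal invariance (apply it to $h = 2fg$ and use independence of $\int Q\,dV$ on the conformal representative stated in the Background) gives $\tr_g \mathcal{G} = 0$. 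So $\mathcal{G}$ is a natural, divergence-free, trace-free symmetric $2$-tensor built from $g^{-1}$ and the covariant derivatives of curvature, and it suffices to show $\mathcal{G} = (-1)^{n/2}\tfrac{n-2}{2}\,\Ob$.

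Second, I would identify the top-order symbol of $\mathcal{G}$. Starting from $Q = -\tfrac{1}{n-2}\Delta^{n/2-1}R + \mathrm{lots}$ and the standard formula $\dot R = \nabla^i\nabla^j h_{ij} - \Delta(\tr_g h) - \langle h,\Rc\rangle$, observe that the naive highest-derivative contribution $-\tfrac{1}{n-2}\int \Delta^{n/2-1}\dot R\, dV$ vanishes on the closed manifold $M$ because $\int_M \Delta\phi\, dV = 0$. Hence the genuine principal symbol of $\mathcal{G}$ comes from variation of the Laplacian operator itself, $\dot\Delta\phi = -h^{ij}\nabla_i\nabla_j\phi + (\nabla h)\ast(\nabla\phi)$, iterated through the $n/2-1$ copies of $\Delta$ acting on $R$. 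Collecting these contributions, symmetrizing, and integrating by parts while tracking the curvature commutators $[\nabla,\nabla]$, the leading part of $\mathcal{G}_{ij}$ assembles into a constant multiple of $\Delta^{n/2-2}(\Delta R_{ij} - \nabla_i\nabla_j R)$ up to curvature-quadratic corrections. Substituting $\mathsf{A}_{ij} = \tfrac{1}{n-2}(R_{ij} - \tfrac{1}{2(n-1)}Rg_{ij})$ rewrites this as a multiple of $\Delta^{n/2-2}(\Delta\mathsf{A}_{ij} - \nabla_j\nabla_i\mathsf{A}_k{}^k)$, which is exactly the leading symbol of $\Ob$ from part (1) of the Fefferman–Graham theorem; matching coefficients fixes the constant $c = (-1)^{n/2}\tfrac{n-2}{2}$.

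Third, I would upgrade this leading-symbol identification to the full equality $\mathcal{G} = c\,\Ob$. The difference $\mathcal{G} - c\,\Ob$ is a natural symmetric $2$-tensor which is divergence-free, trace-free, and conformally covariant of weight $2-n$ (its conformal transformation law is forced by the conformal invariance of $\int Q\,dV$ together with part (3) of the Fefferman–Graham theorem), and whose leading symbol vanishes. Invoking the Graham–Hirachi classification cited in the introduction---that in even dimensions $\geq 6$ the only natural conformally covariant symmetric $2$-tensors of this weight are scalar multiples of $\Ob$---forces this difference to be zero. The $n=4$ case is the classical identification of the Bach tensor as the gradient of $\int Q\,dV$ noted in the introduction, and can be checked directly. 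The main obstacle is the bookkeeping in the second step: since the apparent top-derivative piece of $\dot Q$ is a total Laplacian that integrates to zero, the actual principal part of $\mathcal{G}$ is produced by the subleading contributions coming from variation of Christoffels inside the iterated $\Delta$'s and from curvature commutators generated by repeated integration by parts. Organizing these into the precise Schouten combination appearing in $\Ob_{ij}$---rather than into some a priori different combination of $\Delta^{n/2-2}$ acting on $\nabla^2\Rc$-terms---is the delicate step, and is where one uses the identity $\nabla^l\nabla^k W_{kijl} = (3-n)\nabla^k C_{ijk}$ together with the trace-free/divergence-free constraints on $\mathcal{G}$.
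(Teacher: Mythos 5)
The paper itself gives no proof of this statement; it is quoted from Graham--Hirachi, whose argument runs through the ambient/Poincar\'e metric construction rather than a direct first-variation computation, so I am assessing your proposal on its own terms. Your first step (existence of the gradient $\mathcal{G}$, with $\operatorname{div}_g\mathcal{G}=0$ from diffeomorphism invariance and $\tr_g\mathcal{G}=0$ from conformal invariance) is fine. The gap begins in your second step: the leading linear-in-curvature part of $\mathcal{G}$ is \emph{not} computable from the displayed expansion $Q=-\tfrac{1}{n-2}\Delta^{n/2-1}R+\mathrm{lots}$. Varying a quadratic term of $Q$ of the form $\mathrm{contr}(\nabla^{i_1}\Rm\otimes\nabla^{i_2}\Rm)$ with $i_1+i_2=n-4$ produces, via $\dot\Rm=\nabla^2h+\Rm\ast h$, a term $\mathrm{contr}(\nabla^{i_1+2}h\otimes\nabla^{i_2}\Rm)$; integrating by parts throws all $i_1+2$ derivatives onto the single remaining factor and yields $h\ast\nabla^{n-2}\Rm$. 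This contributes to $\mathcal{G}_{ij}$ terms that are \emph{linear} in curvature and of exactly the same top order ($\Delta^{n/2-1}\Rc$, $\Delta^{n/2-2}\nabla^2R$, $g\,\Delta^{n/2-1}R$) as the piece you compute from $\dot\Delta$. Since no closed form for these quadratic terms of $Q$ is available in general even dimension (the paper says as much in the Background section), the constant $c$ cannot be fixed by matching coefficients against the leading symbol of $\Ob$. The trace-free and divergence-free constraints do pin down the linear part of $\mathcal{G}$ up to one overall scalar, but that scalar is precisely the number $(-1)^{n/2}\tfrac{n-2}{2}$ you need to produce.

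The third step also does not close. The classification you invoke---that in even dimensions $\geq 6$ the only conformally covariant tensors are ``essentially'' $W$ and $\Ob$---is a statement about invariants with nonvanishing linear part; it says nothing about natural conformally covariant tensors that are polynomial of degree $\geq 2$ in curvature. The difference $\mathcal{G}-c\,\Ob$ has vanishing linear part by construction, and nonzero candidates of the required weight do exist: in dimension $6$, for instance, any undifferentiated cubic contraction of the Weyl tensor with two free lower indices transforms with conformal weight $2\cdot 3-(12-2)=-4=2-n$. One would have to show that no such expression can simultaneously be trace-free and divergence-free, and neither the cited classification nor anything else in your argument does this. So the identity $\mathcal{G}=c\,\Ob$ does not follow from the ingredients you assemble; closing the argument genuinely requires the Graham--Hirachi route (identifying $\int_M Q\,dV$ with a renormalized-volume-type coefficient of the ambient construction and varying that coefficient directly), or at minimum an explicit determination of the subleading terms of $Q$ together with a much sharper uniqueness theorem.
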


Define the adjusted ambient obstruction tensor $\Obhat$ to be
\begin{equation} \label{eq: gradient expressed with covariant derivatives of Rm}
\Obhat=(-1)^\frac{n}{2}\Ob+\frac{(-1)^\frac{n}{2}}{2(n-1)(n-2)}(\Delta^{\frac{n}{2}-1}R)g.
\end{equation}

We rewrite $\Obhat$ in terms of the Ricci and scalar curvatures.
\begin{proposition} \label{prop: Obhat expanded}
If $(M,g)$ is a Riemannian manifold, then
\begin{gather}
\Ob=\Delta^{\frac{n}{2}-1}\mathsf{A}-\frac{1}{2(n-1)}\Delta^{\frac{n}{2}-2}\nabla^2 R+\sum_{j=2}^{n/2}P_j^{n-2j}(\Rm) \label{eq: Ob in terms of A} \\
\Obhat=\frac{(-1)^\frac{n}{2}}{n-2}\Delta^{\frac{n}{2}-1}\Rc+\frac{(-1)^{\frac{n}{2}-1}}{2(n-1)}\Delta^{\frac{n}{2}-2}\nabla^2 R+\sum_{j=2}^{n/2}P_j^{n-2j}(\Rm). \nonumber
\end{gather}
\end{proposition}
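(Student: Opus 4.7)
The plan is to start from the formula for $\Ob_{ij}$ recorded in the Fefferman--Graham theorem above, namely
\begin{equation*}
\Ob_{ij}=\Delta^{n/2-2}(\Delta\mathsf{A}_{ij}-\nabla_j\nabla_i{\mathsf{A}_k}^k)+\sum_{j=2}^{n/2}P_j^{n-2j}(\Rm),
\end{equation*}
and then simplify in two stages. First I would compute the trace of the Schouten tensor: directly from its definition, ${\mathsf{A}_k}^k=\tfrac{1}{n-2}(R-\tfrac{n}{2(n-1)}R)=\tfrac{R}{2(n-1)}$. Substituting this into the displayed formula turns the second term into $\tfrac{1}{2(n-1)}\Delta^{n/2-2}\nabla_j\nabla_i R$, which is the shape appearing in \eqref{eq: Ob in terms of A}.

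Second, I need to move the outer $\Delta^{n/2-2}$ past the single $\Delta$ in front of $\mathsf{A}_{ij}$ to arrive at a clean $\Delta^{n/2-1}\mathsf{A}$. A priori the two operators do not commute on tensors, but each commutator of a Laplacian with a covariant derivative produces a term of the form $\nabla^{a}\Rm\,{*}\,\nabla^{b}\mathsf{A}$ with fewer derivatives acting on $\mathsf{A}$, and since $\mathsf{A}=\tfrac{1}{n-2}(\Rc-\tfrac{R}{2(n-1)}g)$ is linear in curvature, such commutator terms are quadratic in curvature with a total of $n-4$ derivatives. These are therefore absorbed into the $\sum_{j=2}^{n/2}P_j^{n-2j}(\Rm)$ schematic remainder. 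This establishes the first displayed formula.

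For the second formula I would substitute $\mathsf{A}=\tfrac{1}{n-2}\bigl(\Rc-\tfrac{1}{2(n-1)}Rg\bigr)$ into \eqref{eq: Ob in terms of A}, obtaining
\begin{equation*}
\Ob=\frac{1}{n-2}\Delta^{n/2-1}\Rc-\frac{1}{2(n-1)(n-2)}(\Delta^{n/2-1}R)g-\frac{1}{2(n-1)}\Delta^{n/2-2}\nabla^2 R+\sum_{j=2}^{n/2}P_j^{n-2j}(\Rm),
\end{equation*}
and then multiply by $(-1)^{n/2}$ and add $\tfrac{(-1)^{n/2}}{2(n-1)(n-2)}(\Delta^{n/2-1}R)g$ according to the definition \eqref{eq: gradient expressed with covariant derivatives of Rm} of $\Obhat$. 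The pure scalar-curvature Laplacian terms then cancel, and using $-(-1)^{n/2}=(-1)^{n/2-1}$ on the remaining $\nabla^2 R$ term yields exactly the claimed expression for $\Obhat$.

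The work is essentially bookkeeping; the one place to be careful is the commutator step in the middle paragraph, where one must verify that all terms produced by commuting $\Delta$'s past $\nabla$'s genuinely fit the $P_j^{n-2j}(\Rm)$ template (i.e.\ are at least quadratic in $\Rm$ and have the correct total number of derivatives). This follows because each such commutator trades one derivative on $\mathsf{A}$ for a factor of $\Rm$ (or its derivative), strictly lowering the derivative count on the leading curvature factor while keeping the homogeneity in the scaling sense, so the resulting monomials land in $P_j^{n-2j}(\Rm)$ for some $j\geq 2$.
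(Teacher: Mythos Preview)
Your proposal is correct and follows essentially the same route as the paper's proof: compute $\mathsf{A}_k{}^k=\tfrac{R}{2(n-1)}$, substitute into the Fefferman--Graham formula to obtain \eqref{eq: Ob in terms of A}, then expand $\mathsf{A}$ in terms of $\Rc$ and $R$ and add the conformal term so that the $(\Delta^{n/2-1}R)g$ pieces cancel. One remark: the commutator discussion in your second paragraph is unnecessary, since $\Delta^{n/2-2}$ and $\Delta$ are powers of the same operator and $\Delta^{n/2-2}(\Delta\mathsf{A})=\Delta^{n/2-1}\mathsf{A}$ holds on the nose by composition, with no lower-order terms to absorb.
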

\begin{proof}
First, we reexpress $\Ob$:
\begin{align*}
{\mathsf{A}_k}^k & =\tfrac{1}{n-2}\left[g^{jk}R_{kj}-\tfrac{1}{2(n-1)}Rg^{jk}g_{kj}\right] \\
& =\tfrac{1}{n-2}\left[R-\tfrac{n}{2(n-1)}R\right] \\
& =\tfrac{1}{2(n-1)}R
\end{align*}
and
\begin{align*}
\Ob_{ij} & =\Delta^{\frac{n}{2}-2}(\Delta\mathsf{A}_{ij}-\nabla_j\nabla_i{\mathsf{A}_k}^k)+\sum_{j=2}^{n/2}P_j^{n-2j}(\Rm) \\
& =\Delta^{\frac{n}{2}-1}\mathsf{A}_{ij}-\frac{1}{2(n-1)}\Delta^{\frac{n}{2}-2}\nabla_j\nabla_i R+\sum_{j=2}^{n/2}P_j^{n-2j}(\Rm).
\end{align*}
Next, we reexpress $\Obhat$ using \eqref{eq: Ob in terms of A}:
\begin{align*}
\Obhat ={} &  (-1)^\frac{n}{2}\Ob+\frac{(-1)^\frac{n}{2}}{2(n-1)(n-2)}(\Delta^{\frac{n}{2}-1}R)g \\
={} & (-1)^\frac{n}{2}\Delta^{\frac{n}{2}-1}\mathsf{A}+\frac{(-1)^{\frac{n}{2}-1}}{2(n-1)}\Delta^{\frac{n}{2}-2}\nabla^2 R+\sum_{j=2}^{n/2}P_j^{n-2j}(\Rm)+\frac{(-1)^\frac{n}{2}}{2(n-1)(n-2)}(\Delta^{\frac{n}{2}-1}R)g \\
\begin{split} 
={} & \frac{(-1)^\frac{n}{2}}{n-2}\Delta^{\frac{n}{2}-1}\Rc+\frac{(-1)^{\frac{n}{2}-1}}{2(n-1)(n-2)}(\Delta^{\frac{n}{2}-1}R)g+\frac{(-1)^{\frac{n}{2}-1}}{2(n-1)}\Delta^{\frac{n}{2}-2}\nabla^2 R \\
& {}+\frac{(-1)^\frac{n}{2}}{2(n-1)(n-2)}(\Delta^{\frac{n}{2}-1}R)g+\sum_{j=2}^{n/2}P_j^{n-2j}(\Rm)
\end{split} \\
={} & \frac{(-1)^\frac{n}{2}}{n-2}\Delta^{\frac{n}{2}-1}\Rc+\frac{(-1)^{\frac{n}{2}-1}}{2(n-1)}\Delta^{\frac{n}{2}-2}\nabla^2 R+\sum_{j=2}^{n/2}P_j^{n-2j}(\Rm).
\end{align*}
\end{proof}

\section{Short Time Existence and Uniqueness} \label{sec: short time existence and uniqueness}
In this section, we derive the evolution equations for the covariant derivatives of the curvature tensor. We then give a theorem asserting the short time existence and uniqueness of solutions to AOF.
\subsection{Preliminaries}
We collect some facts about Riemannian manifolds that will be used to derive the evolution equations.
\begin{lemma} \label{lem: Delta Rc Bianchi identity}
 (Hamilton \cite{Hamilton3MfdPositiveRicciCurvature}, Lemma 7.2) On any Riemannian manifold, the following identity holds:
 \begin{equation*}
  \Delta R_{jklm}=\nabla_j\nabla_mR_{lk}-\nabla_j\nabla_lR_{mk}+\nabla_k\nabla_lR_{mj}-\nabla_k\nabla_mR_{lj}+\Rm^{*2}.
 \end{equation*}
\end{lemma}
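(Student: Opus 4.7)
The plan is a direct calculation starting from the second Bianchi identity
$$\nabla_i R_{jklm} + \nabla_j R_{kilm} + \nabla_k R_{ijlm} = 0.$$
Applying $\nabla^i = g^{ia}\nabla_a$ and using the definition $\Delta = \nabla^i\nabla_i$ yields
$$\Delta R_{jklm} = -\nabla^i \nabla_j R_{kilm} - \nabla^i \nabla_k R_{ijlm},$$
so the task reduces to massaging these two terms into second derivatives of the Ricci tensor, modulo quadratic curvature junk.

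Next I would commute the outer two covariant derivatives in each term so that the divergence sits on the inside. The commutator $[\nabla^i,\nabla_j]$ acting on a $(0,4)$-tensor produces four terms of Riemann-times-Riemann type once the $i$ index is contracted against $g^{ia}$; these are precisely of the schematic form $\Rm^{*2}$ and are absorbed into the error term in the statement. After commuting, the surviving terms are $-\nabla_j (\nabla^i R_{kilm})$ and $-\nabla_k(\nabla^i R_{ijlm})$.

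The inner divergences are then reduced via the once-contracted second Bianchi identity, which, up to the symmetries of $\Rm$, reads $\nabla^i R_{ijkl} = \nabla_k R_{jl} - \nabla_l R_{jk}$. Using the antisymmetry $R_{kilm} = -R_{iklm}$ to put the first term into this form, each divergence produces a difference of first derivatives of $\Rc$, and applying the outer $\nabla_j$ or $\nabla_k$ delivers the four second-derivative-of-Ricci terms displayed on the right-hand side of the lemma.

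The only real obstacle is bookkeeping: fixing a convention for how $\Rc$ arises as a trace of $\Rm$, tracking signs coming from the antisymmetries of $\Rm$ in its first and last pairs of indices, and verifying that each commutator genuinely yields an $\Rm^{*2}$ contribution with no stray derivative pieces. No deeper geometric input is required; this is the standard Bianchi-plus-commutator identity that underlies the usual BBS-type smoothing estimates.
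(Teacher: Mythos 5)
Your outline is correct and is essentially Hamilton's own argument: the paper does not prove this lemma but cites it to Hamilton (Lemma 7.2 of \cite{Hamilton3MfdPositiveRicciCurvature}), where it is derived exactly as you describe, by tracing the second Bianchi identity, commuting the resulting outer derivative past the divergence (the commutators contribute only $\Rm^{*2}$ terms, including a $\Rc*\Rm$ piece from the traced index), and then applying the once-contracted second Bianchi identity to convert each inner divergence into a difference of first derivatives of $\Rc$. The sign bookkeeping you defer is genuinely convention-dependent but works out with Hamilton's conventions, so there is no gap.
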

\begin{proposition} \label{prop: grad and Laplacian commutator}
 If $A$ is a tensor on a Riemannian manifold and $k,l\geq 1$, then
 \begin{equation*}
  \nabla^k\Delta^l A=\Delta^l\nabla^k A+\sum_{i=0}^{2l+k-2}\nabla^{2l+k-2-i}\Rm*\nabla^i A.
 \end{equation*}
\end{proposition}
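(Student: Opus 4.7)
The plan is a nested induction with base case $k=l=1$ established from the Ricci identity. Writing $\nabla_p\Delta A = g^{ab}\nabla_p\nabla_a\nabla_b A$ and moving $\nabla_p$ past the two inner covariant derivatives one at a time (using that $g^{ab}$ is parallel), one picks up two commutators of the form $[\nabla,\nabla]$, each of which acts schematically as $\Rm * (\cdot)$. After invoking the Leibniz rule $\nabla_a(\Rm * A) = \nabla\Rm * A + \Rm * \nabla A$, one obtains $\nabla\Delta A = \Delta\nabla A + \nabla\Rm * A + \Rm * \nabla A$, which matches the claimed formula when $k = l = 1$.

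For the inner induction on $k$ (with $l$ fixed at $1$), I would write $\nabla^{k+1}\Delta A = \nabla(\nabla^k\Delta A)$, apply the inductive hypothesis to the bracketed expression, and then distribute the outer $\nabla$ via the $k=l=1$ identity on the principal $\Delta\nabla^k A$ term and via the Leibniz rule on each $\nabla^{k-1-i}\Rm * \nabla^i A$ summand; the resulting terms collect into $\sum_{i=0}^{k}\nabla^{k-i}\Rm * \nabla^i A$ after index bookkeeping. For the outer induction on $l$, I would apply the already-established $l=1$ formula with $\Delta^{l-1}A$ in place of $A$, writing
\begin{equation*}
\nabla^k\Delta^l A = \Delta\nabla^k\Delta^{l-1}A + \sum_{i=0}^{k}\nabla^{k-i}\Rm * \nabla^i\Delta^{l-1}A + \ldots,
\end{equation*}
then apply the inductive hypothesis to each $\nabla^j\Delta^{l-1}A$ factor, and finally push the outer $\Delta$ through the remaining schematic curvature products via $\Delta(T_1 * T_2) = \Delta T_1 * T_2 + \nabla T_1 * \nabla T_2 + T_1 * \Delta T_2$, invoking the basic commutator identity whenever a $\nabla$ needs to be moved past another $\nabla$.

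The principal obstacle is the index bookkeeping: one must verify that every spawned term has the shape $\nabla^a\Rm * \nabla^b A$ with $a + b \leq 2l+k-2$ and that every index $i \in \{0,\dots,2l+k-2\}$ appears in the final sum. The key observation making this tractable is that each commutator $[\nabla,\nabla]$ consumes exactly two ordinary derivatives while creating a $\Rm$-factor of curvature weight $2$, so the total derivative weight $2l+k$ of the original expression is conserved at every step. Any higher-order products in $\Rm$ that appear in intermediate steps have strictly fewer derivatives falling on $A$ and are absorbed into a single schematic $\nabla^{a'}\Rm * \nabla^b A$ summand in the sense of the loose $*$-convention used throughout the paper.
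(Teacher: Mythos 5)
Your proposal is correct and follows essentially the same strategy as the paper: establish the $k=l=1$ case from the Ricci identity, then run a nested induction in which the error terms are tracked schematically via the $*$-convention and the Leibniz rule, with total derivative weight conserved. The only (immaterial) difference is that you fix $l=1$ and induct on $k$ first, whereas the paper first inducts on $l$ with $k=1$ and then on $k$; both orderings close the induction in the same way.
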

\begin{proof}
 First we claim that $\nabla\Delta^l A=\Delta^l\nabla A+\sum_{i=0}^l\nabla^{2l-1-i}\Rm*\nabla^i A$. For any tensor $A$,
 \begin{align*}
  \nabla\Delta A
  & =\nabla_i\nabla^j\nabla_j A \\
  & =\nabla^j\nabla_i\nabla_j A+\Rm*\nabla A \\
  & =\nabla^j\nabla_j\nabla_i A+\nabla\Rm*A+\Rm*\nabla A \\
  & =\Delta\nabla A+\nabla\Rm*A+\Rm*\nabla A.
 \end{align*}
 Suppose the claim is true for $l-1$. Then
 \begin{align*}
  \nabla^2\sum_{i=0}^{2l-3}\nabla^{2l-3-i}\Rm*\nabla^i A)
   & =\nabla\sum_{i=0}^{2l-3}(\nabla^{2l-2-i}\Rm+\nabla^i A+\nabla^{2l-3-i}\Rm*\nabla^{i+1}A) \\
   & =\nabla\left(\sum_{i=0}^{2l-3}\nabla^{2l-2-i}\Rm*\nabla^i A+\Rm*\nabla^{2l-2}A\right) \\
   & =\begin{aligned}[t]
       \sum_{i=0}^{2l-3}(\nabla^{2l-1-i}\Rm*\nabla^i A+\nabla^{2l-2-i}\Rm+\nabla^{i+1}A) \\
       {}+\nabla\Rm*\nabla^{2l-2} A+\Rm*\nabla^{2l-1}A
      \end{aligned} \\
   & =\sum_{i=0}^{2l-3}\nabla^{2l-1-i}\Rm*\nabla^i A+\nabla\Rm*\nabla^{2l-2} A+\nabla\Rm*\nabla^{2l-2}A+\Rm*\nabla^{2l-1}A \\
   & =\sum_{i=0}^{2l-1}\nabla^{2l-1-i}*\nabla^i A.
 \end{align*}
 Next,
 \begin{align*}
  \nabla\Delta^l A
  & =\nabla\Delta\Delta^{l-1}A \\
  & =\Delta\nabla\Delta^{l-1}A+\nabla\Rm*\nabla^{2l-2}A+\Rm*\nabla^{2l-1}A \\
  & =\Delta\left(\Delta^{l-1}\nabla A+\sum_{i=0}^{2l-1}\nabla^{2l-1-i}\Rm*\nabla^i A\right)+\nabla\Rm*\nabla^{2l-2}A+\Rm*\nabla^{2l-1}A \\
  & =\Delta^l\nabla A+\sum_{i=0}^{2l-1}\nabla^{2l-1-i}*\nabla^i A+\nabla\Rm*\nabla^{2l-2}A+\Rm*\nabla^{2l-1}A \\
  & =\Delta^l\nabla A+\sum_{i=0}^{2l-1}\nabla^{2l-1-i}*\nabla^i A.
 \end{align*}
 Assume the proposition holds for $k-1$. Then
 \begin{align*}
  \nabla\sum_{i=0}^{2l+k-3}\nabla^{2l+k-3-i}\Rm*\nabla^i A
  & =\sum_{i=0}^{2l+k-3}\nabla^{2l+k-2-i}\Rm*\nabla^i A+\Rm*\nabla^{2l+k-2}A \\
  & =\sum_{i=0}^{2l+k-2}\nabla^{2l+k-2-i}\Rm*\nabla^i A.
 \end{align*}
 Lastly,
 \begin{align*}
  \nabla^k\Delta^l A
  & =\nabla\nabla^{k-1}\Delta^l A \\
  & =\nabla\left(\Delta^l\nabla^{k-1}A+\sum_{i=0}^{2l+k-3}\nabla^{2l+k-3-i}\Rm*\nabla^i A\right) \\
  & =\Delta^l\nabla\nabla^{k-1}A+\sum_{i=0}^{2l-1}\nabla^{2l-1-i}\Rm*\nabla^i\nabla^{k-1}A+\nabla\sum_{i=0}^{2l+k-3}\nabla^{2l+k-3-i}\Rm*\nabla^i A \\
  & =\Delta^l\nabla^k A+\sum_{i=0}^{2l-1}\nabla^{2l-1-i}\Rm*\nabla^{i+k-1}A+\sum_{i=0}^{2l+k-2}\nabla^{2l+k-2-i}\Rm*\nabla^i A \\
  & =\Delta^l\nabla^k A+\sum_{i=0}^{2l+k-2}\nabla^{2l+k-2-i}\Rm*\nabla^i A.
 \end{align*}
\end{proof}
\begin{proposition} \label{prop: commutator of partial_t and nabla^k}
 Let $M$ be a manifold and $g(t)$ be a one-parameter family of metrics on $M$. If $A$ is a tensor on $M$ and $k\geq 1$, then
 \begin{equation*}
  \partial_t\nabla^k A=\nabla^k\partial_t A+\sum_{j=0}^{k-1}\nabla^j(\nabla\partial_t g*\nabla^{k-1-j}A).
 \end{equation*}
\end{proposition}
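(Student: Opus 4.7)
The plan is to prove the identity by induction on $k$, with the variation formula for the Christoffel symbols providing the base case.

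For the base case $k=1$, I would recall the standard fact that if $\nabla$ and $\widetilde{\nabla}$ are the Levi-Civita connections of $g$ and $\tilde{g}$, then their difference is a tensor built from the first covariant derivatives of $g-\tilde{g}$. Differentiating this in $t$ yields
\begin{equation*}
\partial_t \Gamma_{ij}^k = \tfrac{1}{2}g^{kl}\bigl(\nabla_i \partial_t g_{jl}+\nabla_j\partial_t g_{il}-\nabla_l\partial_t g_{ij}\bigr),
\end{equation*}
so that $\partial_t\Gamma$ is a contraction of $\nabla\partial_t g$. Since $\nabla A$ differs from $\partial A$ by a Christoffel-symbol contraction against $A$, and $\partial_t$ commutes with ordinary partial derivatives, we obtain
\begin{equation*}
\partial_t\nabla A = \nabla\partial_t A + \nabla\partial_t g * A,
\end{equation*}
which is exactly the formula for $k=1$ (the single term $j=0$ in the sum).

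For the inductive step, assume the formula holds for some $k-1\geq 1$. Write $\partial_t\nabla^k A = \partial_t\nabla(\nabla^{k-1}A)$ and apply the $k=1$ case to the tensor $\nabla^{k-1}A$:
\begin{equation*}
\partial_t\nabla^k A = \nabla\bigl(\partial_t\nabla^{k-1}A\bigr) + \nabla\partial_t g * \nabla^{k-1}A.
\end{equation*}
By the inductive hypothesis applied to the first term,
\begin{equation*}
\partial_t\nabla^{k-1}A = \nabla^{k-1}\partial_t A + \sum_{j=0}^{k-2}\nabla^j\bigl(\nabla\partial_t g * \nabla^{k-2-j}A\bigr).
\end{equation*}
Applying $\nabla$ to both sides and combining with the remainder term gives
\begin{equation*}
\partial_t\nabla^k A = \nabla^k\partial_t A + \sum_{j=0}^{k-2}\nabla^{j+1}\bigl(\nabla\partial_t g * \nabla^{k-2-j}A\bigr) + \nabla\partial_t g * \nabla^{k-1}A.
\end{equation*}
Reindexing the sum via $j\mapsto j-1$ absorbs the extra $\nabla\partial_t g * \nabla^{k-1}A$ term as the $j=0$ summand, yielding precisely
\begin{equation*}
\partial_t\nabla^k A = \nabla^k\partial_t A + \sum_{j=0}^{k-1}\nabla^j\bigl(\nabla\partial_t g * \nabla^{k-1-j}A\bigr).
\end{equation*}

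I do not expect a serious obstacle here: the argument is bookkeeping once the variation of $\Gamma$ is in hand. The only subtle point is tracking the $*$-notation (which absorbs index contractions, $g$, and $g^{-1}$) so that the single universal tensor $\nabla\partial_t g$ on the right-hand side really does encode all the terms produced by $\partial_t\Gamma$; this is legitimate because $\partial_t g^{-1} = -g^{-1}*\partial_t g*g^{-1}$ introduces no new covariant derivatives.
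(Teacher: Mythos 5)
Your proof is correct and follows essentially the same route as the paper: induction on $k$, with the base case coming from the variation formula for the Christoffel symbols ($\partial_t\Gamma = \nabla\partial_t g$ up to contractions) and the inductive step obtained by applying the $k=1$ case to $\nabla^{k-1}A$ and then the inductive hypothesis. Your bookkeeping in the inductive step is in fact a bit cleaner than the paper's, which expands the $\nabla^j(\cdots)$ terms via Leibniz into double sums before recollecting them, whereas you simply commute $\nabla$ past the sum and reindex.
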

\begin{proof}
 First
 \begin{align*}
  \partial_t\nabla A
  & \begin{aligned}[t]
     =\partial_i\partial_t A_{j_1\cdots j_r}^{k_1\cdots k_s}
     -\sum_{m=1}^r\left[\partial_t\Gamma_{ij_m}^l A_{j_1\cdots j_{m-1}lj_{m+1}\cdots j_r}^{k_1\cdots k_s}
     +\Gamma_{ij_m}^l\partial_t A_{j_1\cdots j_{m-1}lj_{m+1}\cdots j_r}^{k_1\cdots k_s}\right] \\
     {}+\sum_{p=1}^s\left[\partial_t\Gamma_{il}^{k_p}A_{j_1\cdots j_r}^{k_1\cdots k_{p-1}qk_{p+1}\cdots k_s}
     +\Gamma_{il}^{k_p}\partial_t A_{j_1\cdots j_r}^{k_1\cdots k_{p-1}qk_{p+1}\cdots k_s}\right]
    \end{aligned} \\
  & =\nabla\partial_t A+\partial_t\Gamma*A \\
  & =\nabla\partial_t A+\nabla\partial_t g*A,
 \end{align*}
 so the proposition is true when $k=1$. Assume the proposition holds for $k-1$. Then
 \begin{align*}
  \partial_t\nabla^k A
  & =\partial_t\nabla\nabla^{k-1}A \\
  & =\nabla\partial_t\nabla^{k-1} A+\nabla\partial_t g*\nabla^{k-1}A \\
  & =\nabla^k\partial_t A+\nabla\partial_t g*\nabla^{k-1}A+\nabla\sum_{j=0}^{k-2}\nabla^j(\nabla\partial_t g*\nabla^{k-2-j}A) \\
  & =\nabla^k\partial_t A+\nabla\partial_t g*\nabla^{k-1}A+\nabla\sum_{j=0}^{k-2}\sum_{i=0}^j\nabla^{i+1}\partial_t g*\nabla^{k-2-i}A \\
  & =\nabla^k\partial_t A+\nabla\partial_t g*\nabla^{k-1}A+\sum_{j=0}^{k-2}\sum_{i=0}^j(\nabla^{i+2}\partial_t g*\nabla^{k-2-i}A+\nabla^{i+1}\partial_t g*\nabla^{k-1-i}A) \\
  & =\nabla^k\partial_t A+\sum_{j=0}^{k-2}\sum_{i=0}^j\nabla^{i+1}\partial_t g*\nabla^{k-1-i}A+\nabla\partial_t g*\nabla^{k-1}A+\sum_{j=0}^{k-2}\nabla^{j+2}\partial_t g*\nabla^{k-2-j}A \\
  & =\nabla^k\partial_t A+\sum_{j=0}^{k-2}\sum_{i=0}^j\nabla^{i+1}\partial_t g*\nabla^{k-1-i}A+\nabla\partial_t g*\nabla^{k-1}A+\sum_{j=1}^{k-1}\nabla^{j+1}\partial_t g*\nabla^{k-1-j}A \\
  & =\nabla^k\partial_t A+\sum_{j=0}^{k-2}\sum_{i=0}^j\nabla^{i+1}\partial_t g*\nabla^{k-1-i}A+\sum_{j=0}^{k-1}\nabla^{j+1}\partial_t g*\nabla^{k-1-j}A \\
  & =\nabla^k\partial_t A+\sum_{j=0}^{k-2}\nabla^j(\nabla\partial_t g*\nabla^{k-1-j}A)+\nabla^{k-1}(\nabla\partial_t g*A) \\
  & =\nabla^k\partial_t A+\sum_{j=0}^{k-1}\nabla^j(\nabla\partial_t g*\nabla^{k-1-j}A).
 \end{align*}
\end{proof}

\subsection{Evolution Equations}
We derive the equations for $\partial_t\nabla^k\Rm$ for every $k\geq 0$.
\begin{proposition} \label{prop: evolution of Rm}
 If $(M,g(t))$ is a solution to AOF, then
 \begin{equation*}
  \partial_t\Rm=\frac{(-1)^{\frac{n}{2}+1}}{2(n-2)}\Delta^\frac{n}{2}\Rm+\sum_{j=2}^{n/2+1}P_j^{n-2j+2}(\Rm).
 \end{equation*}
\end{proposition}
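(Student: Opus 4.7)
The plan is to start from the standard variation formula for the Riemann tensor: if $\partial_t g_{ij}=h_{ij}$, then
\begin{equation*}
\partial_t R_{ijkl}=\tfrac{1}{2}\bigl(\nabla_i\nabla_k h_{jl}-\nabla_i\nabla_l h_{jk}-\nabla_j\nabla_k h_{il}+\nabla_j\nabla_l h_{ik}\bigr)+\Rm*h,
\end{equation*}
which follows from $\delta\Gamma_{ij}^k=\tfrac{1}{2}g^{kl}(\nabla_i h_{jl}+\nabla_j h_{il}-\nabla_l h_{ij})$ together with $\delta R^l_{ijk}=\nabla_i(\delta\Gamma^l_{jk})-\nabla_j(\delta\Gamma^l_{ik})$, after absorbing the commutators $[\nabla_i,\nabla_j]h$ and $(\delta g)\cdot\Rm$ into the $\Rm*h$ term. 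For AOF we take $h=\Obhat$, and by Proposition \ref{prop: Obhat expanded} the top-order part of $h$ is $\alpha\Delta^{n/2-1}\Rc+\beta\Delta^{n/2-2}\nabla^2R$ with $\alpha=(-1)^{n/2}/(n-2)$ and $\beta=(-1)^{n/2-1}/[2(n-1)]$, plus terms in $\sum_{j=2}^{n/2}P_j^{n-2j}(\Rm)$.

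Next I would isolate the top-order contribution from the Ricci piece. Substituting $h=\alpha\Delta^{n/2-1}\Rc$ into the variation formula and using Proposition \ref{prop: grad and Laplacian commutator} to commute each $\nabla^2$ past $\Delta^{n/2-1}$ yields
\begin{equation*}
\tfrac{\alpha}{2}\Delta^{n/2-1}\bigl(\nabla_i\nabla_k R_{jl}-\nabla_i\nabla_l R_{jk}-\nabla_j\nabla_k R_{il}+\nabla_j\nabla_l R_{ik}\bigr)
\end{equation*}
plus an error consisting of products $\nabla^a\Rm*\nabla^b\Rc$ with $a+b=n-2$, i.e.\ of type $P_2^{n-2}(\Rm)$. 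Relabeling indices in Lemma \ref{lem: Delta Rc Bianchi identity} and using $R_{ij}=R_{ji}$ identifies the antisymmetric combination in parentheses with $-\Delta R_{ijkl}+\Rm^{*2}$, yielding the leading term $-\tfrac{\alpha}{2}\Delta^{n/2}R_{ijkl}=\tfrac{(-1)^{n/2+1}}{2(n-2)}\Delta^{n/2}R_{ijkl}$, modulo a further $P_2^{n-2}(\Rm)$ error.

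The remaining piece $h_{ij}=\beta\Delta^{n/2-2}\nabla_i\nabla_j R$ should contribute nothing to top order. Plugging it into the bracket of the variation formula and using Proposition \ref{prop: grad and Laplacian commutator} to move the outer $\nabla^2$ past $\Delta^{n/2-2}$, every summand reduces, modulo curvature commutators, to the common operator $\Delta^{n/2-2}\nabla_a\nabla_b\nabla_c\nabla_d R$ on the scalar $R$; since adjacent covariant derivatives commute on scalars and swap only across 2-tensors at the cost of an $\Rm*\nabla^2 R$ term, the alternating signs $+,-,-,+$ force cancellation and leave only residues of type $P_2^{n-2}(\Rm)$. Finally, applying $\nabla^2$ to the subleading pieces $P_j^{n-2j}(\Rm)$ of $\Obhat$ produces terms in $P_j^{n-2j+2}(\Rm)$ for $j=2,\ldots,n/2$, while the $\Rm*h$ contribution paired with these same pieces yields terms in $P_{j+1}^{n-2(j+1)+2}(\Rm)$ for $j+1=3,\ldots,n/2+1$; together with the $P_2^{n-2}(\Rm)$ errors from the leading pieces, these assemble into exactly $\sum_{j=2}^{n/2+1}P_j^{n-2j+2}(\Rm)$.

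The main obstacle is the bookkeeping: confirming that the scalar-curvature piece of $\Obhat$ really does not perturb the coefficient of $\Delta^{n/2}\Rm$, and that every commutator residue produced by Proposition \ref{prop: grad and Laplacian commutator}, by the $\Rm*h$ term of the variation formula, and by the lower-order pieces of $\Obhat$ lands in $P_j^{n-2j+2}(\Rm)$ for some $j\in\{2,\ldots,n/2+1\}$. The uniform derivative-plus-factor count $2j+(n-2j+2)=n+2$ serves as a useful consistency check, reflecting that $\partial_t\Rm$ carries two more derivatives of $g$ than $\Obhat$ does.
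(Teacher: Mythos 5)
Your proposal is correct and follows essentially the same route as the paper: it uses Hamilton's variation formula for $\Rm$, splits $\Obhat$ via Proposition \ref{prop: Obhat expanded}, commutes $\Delta^{n/2-1}$ past $\nabla^2$ with Proposition \ref{prop: grad and Laplacian commutator}, identifies the leading Ricci combination with $-\Delta R_{ijkl}+\Rm^{*2}$ via Lemma \ref{lem: Delta Rc Bianchi identity}, and checks that the scalar-curvature piece cancels to top order. The bookkeeping of the $P_j^{n-2j+2}$ error terms matches the paper's computation exactly.
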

\begin{proof}
 Let $\hat{g}(t)$ be a one-parameter family of metrics on $M$ and $h=\partial_t\hat{g}$. The evolution of $\Rm$ is given by (\cite{Hamilton3MfdPositiveRicciCurvature}, Theorem 7.1)
 \begin{equation*}
  \partial_t R_{ijkl}=\tfrac{1}{2}[\nabla_i\nabla_kh_{jl}+\nabla_j\nabla_lh_{ik}-\nabla_i\nabla_lh_{jk}-\nabla_j\nabla_kh_{il}]+\Rm*h.
 \end{equation*}
 If $h=\Delta^{\frac{n}{2}-1}\Rc$ then, using Proposition \ref{prop: grad and Laplacian commutator} in the second line and Lemma \ref{lem: Delta Rc Bianchi identity} in the third line,
 \begin{align*}
  \partial_t R_{ijkl}
  & =\tfrac{1}{2}[\nabla_i\nabla_k\Delta^{\frac{n}{2}-1}R_{jl}+\nabla_j\nabla_l\Delta^{\frac{n}{2}-1}R_{ik}-\nabla_i\nabla_l\Delta^{\frac{n}{2}-1}R_{jk}-\nabla_j\nabla_k\Delta^{\frac{n}{2}-1}R_{il}]+\Rm*\Delta^{\frac{n}{2}-1}\Rc \\
  & =\tfrac{1}{2}\Delta^{\frac{n}{2}-1}
  [\nabla_i\nabla_kR_{jl}+\nabla_j\nabla_lR_{ik}-\nabla_i\nabla_lR_{jk}-\nabla_j\nabla_kR_{il}]+\sum_{i=0}^{n-2}\nabla^{n-2-i}\Rm*\nabla^i\Rc+P_2^{n-2}(\Rm) \\
  & =\tfrac{1}{2}\Delta^{\frac{n}{2}-1}[-\Delta R_{ijkl}+\Rm^{*2}]+P_2^{n-2}(\Rm) \\
  & =-\tfrac{1}{2}\Delta^\frac{n}{2} R_{ijkl}+P_2^{n-2}(\Rm).
 \end{align*}
 If $h=\Delta^{\frac{n}{2}-2}\nabla^2 R$ then, using Proposition \ref{prop: grad and Laplacian commutator} in the second and fourth lines,
 \begin{align*}
  \partial_tR_{ijkl}
  & =\begin{aligned}[t]
      \tfrac{1}{2}[\nabla_i\nabla_k\Delta^{\frac{n}{2}-2}\nabla_j\nabla_lR
      +\nabla_j\nabla_l\Delta^{\frac{n}{2}-2}\nabla_i\nabla_kR
      -\nabla_i\nabla_l\Delta^{\frac{n}{2}-2}\nabla_j\nabla_kR
      -\nabla_j\nabla_k\Delta^{\frac{n}{2}-2}\nabla_i\nabla_lR] \\
      {}+\Rm*\Delta^{\frac{n}{2}-2}\nabla^2 R
     \end{aligned} \\
  & =\begin{aligned}[t]
      \tfrac{1}{2}\Delta^{\frac{n}{2}-2}[\nabla_i\nabla_k\nabla_j\nabla_lR
      +\nabla_j\nabla_l\nabla_i\nabla_kR
      -\nabla_i\nabla_l\nabla_j\nabla_kR
      -\nabla_j\nabla_k\nabla_i\nabla_lR] \\
      {}+\sum_{i=0}^{n-2}\nabla^{n-2-i}\Rm*\nabla^i\nabla^2R+P_2^{n-2}(\Rm)
     \end{aligned} \\
   & =\begin{aligned}[t]
      \tfrac{1}{2}\Delta^{\frac{n}{2}-2}[\nabla_i\nabla_k\nabla_j\nabla_lR
      +\nabla_j\nabla_l\nabla_i\nabla_kR
      -\nabla_i\nabla_l\nabla_j\nabla_kR
      -\nabla_j\nabla_k\nabla_i\nabla_lR]
      +P_2^{n-2}(\Rm)
     \end{aligned} \\
   & =\begin{aligned}[t]
      \tfrac{1}{2}\Delta^{\frac{n}{2}-2}[\nabla_i\nabla_k\nabla_j\nabla_lR
      +\nabla_j\nabla_l\nabla_i\nabla_kR
      -\nabla_i\nabla_k\nabla_j\nabla_lR
      -\nabla_j\nabla_l\nabla_i\nabla_kR
      +\nabla\Rm*\nabla R+\Rm*\nabla^2R] \\
      {}+P_2^{n-2}(\Rm)
     \end{aligned} \\
    & =P_2^{n-2}(\Rm).
 \end{align*}
 If $h=\sum_{j=2}^{n/2}P_j^{n-2j}(\Rm)$, then
 \begin{align*}
  \partial_t\Rm
  & =\nabla^2\sum_{j=2}^{n/2}P_j^{n-2j}(\Rm)+\Rm*\sum_{j=2}^{n/2}P_j^{n-2j}(\Rm) \\
  & =\sum_{j=2}^{n/2}P_j^{n-2j+2}(\Rm)+\sum_{j=2}^{n/2}P_{j+1}^{n-2j}(\Rm).
 \end{align*}
 Combining these results, we conclude that if $h=\Obhat$ then
 \begin{align*}
  \partial_t\Rm
  & =\frac{(-1)^{\frac{n}{2}+1}}{2(n-2)}\Delta^\frac{n}{2}\Rm+P_2^{n-2}(\Rm)+P_2^{n-2}(\Rm)+\sum_{j=2}^{n/2}P_j^{n-2j+2}(\Rm)+\sum_{j=2}^{n/2}P_{j+1}^{n-2j}(\Rm) \\
  & =\frac{(-1)^{\frac{n}{2}+1}}{2(n-2)}\Delta^\frac{n}{2}\Rm+\sum_{j=2}^{n/2+1}P_j^{n-2j+2}(\Rm).
 \end{align*}
\end{proof}

\begin{proposition} \label{prop: evolution of nabla^k Rm}
 If $(M,g(t))$ is a solution to AOF, then
 \begin{equation*}
  \partial_t\nabla^k\Rm=\frac{(-1)^{\frac{n}{2}+1}}{2(n-2)}\Delta^\frac{n}{2}\nabla^k\Rm+\sum_{l=2}^{n/2+1}P_l^{n-2l+k+2}(\Rm).
 \end{equation*}
\end{proposition}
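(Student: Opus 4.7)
The plan is to derive the $k$-th derivative evolution equation directly from the $k=0$ case (Proposition \ref{prop: evolution of Rm}) by invoking the two commutator identities established above: Proposition \ref{prop: grad and Laplacian commutator} for commuting $\nabla^k$ with $\Delta^{n/2}$, and Proposition \ref{prop: commutator of partial_t and nabla^k} for commuting $\partial_t$ with $\nabla^k$. The argument is essentially combinatorial once the identities are aligned; either an induction on $k$ or a one-shot substitution will work, and I would favor the latter since all the needed pieces are in hand.

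First I will apply Proposition \ref{prop: commutator of partial_t and nabla^k} with $A = \Rm$ to obtain
$$\partial_t \nabla^k \Rm = \nabla^k \partial_t \Rm + \sum_{j=0}^{k-1} \nabla^j\bigl(\nabla \partial_t g * \nabla^{k-1-j}\Rm\bigr).$$
Then I substitute the formula for $\partial_t \Rm$ from Proposition \ref{prop: evolution of Rm} and use Proposition \ref{prop: grad and Laplacian commutator} to move $\nabla^k$ past $\Delta^{n/2}$. The leading term becomes $\frac{(-1)^{n/2+1}}{2(n-2)}\Delta^{n/2}\nabla^k\Rm$, and the commutator correction $\sum_{i=0}^{n+k-2}\nabla^{n+k-2-i}\Rm * \nabla^i\Rm$ lies in $P_2^{n+k-2}(\Rm) = P_2^{n-2 \cdot 2 + k + 2}(\Rm)$, filling the $l=2$ slot of the target sum. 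Distributing $\nabla^k$ over each $P_l^{n-2l+2}(\Rm)$ summand via the Leibniz rule produces terms in $P_l^{n-2l+k+2}(\Rm)$, again matching the claim.

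It then remains to check that the correction sum $\sum_{j=0}^{k-1} \nabla^j\bigl(\nabla \partial_t g * \nabla^{k-1-j}\Rm\bigr)$ also lies in $\sum_{l=2}^{n/2+1} P_l^{n-2l+k+2}(\Rm)$. For this I substitute $\partial_t g = \Obhat$ and use Proposition \ref{prop: Obhat expanded} to identify $\nabla \partial_t g$ as a leading $\nabla^{n-1}\Rm$ piece plus terms in $\sum_{l=2}^{n/2} P_l^{n-2l+1}(\Rm)$. Applying Leibniz to $\nabla^j$ acting on the product $\nabla \partial_t g * \nabla^{k-1-j}\Rm$ yields terms in $P_2^{n+k-2}(\Rm)$ from the leading piece and in $P_{l+1}^{n-2(l+1)+k+2}(\Rm)$ for $l \in \{2, \ldots, n/2\}$ from the lower order pieces; all of these lie in the target sum. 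The upper bound $l \le n/2+1$ is saturated when $\nabla^{k-1-j}\Rm$ is paired with the $P_{n/2}$ piece of $\nabla \partial_t g$.

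No delicate analytic estimate appears in the argument. The main obstacle is purely combinatorial: tracking the total derivative count $m$ and number of curvature factors $l$ in each $P_l^m$ contribution, and verifying that every term produced by the Leibniz rule lands within the advertised range $2 \le l \le n/2+1$ with the correct weight $n - 2l + k + 2$ in the upper index.
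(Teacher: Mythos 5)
Your proposal is correct and follows essentially the same route as the paper: commute $\partial_t$ with $\nabla^k$ via Proposition \ref{prop: commutator of partial_t and nabla^k}, substitute Proposition \ref{prop: evolution of Rm}, commute $\nabla^k$ past $\Delta^{n/2}$ via Proposition \ref{prop: grad and Laplacian commutator}, and absorb the correction sum using $\nabla\partial_t g=\sum_{l=1}^{n/2}P_l^{n-2l+1}(\Rm)$. The bookkeeping of the $P_l^m$ indices matches the paper's computation exactly.
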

\begin{proof}
 We compute:
 \begin{align*}
  \sum_{j=0}^{k-1}\nabla^j(\nabla\partial_t g*\nabla^{k-1-j}\Rm)
  & =\sum_{j=0}^{k-1}\nabla^j\left(\sum_{l=1}^{n/2}P_l^{n-2l+1}(\Rm)*\nabla^{k-1-j}\Rm\right) \\
  & =\sum_{j=0}^{k-1}\nabla^j\sum_{l=1}^{n/2}P_{l+1}^{n-2l+k-j}(\Rm) \\
  & =\sum_{j=0}^{k-1}\sum_{l=1}^{n/2}P_{l+1}^{n-2l+k}(\Rm) \\
  & =\sum_{l=1}^{n/2}P_{l+1}^{n-2l+k}(\Rm) \\
  & =\sum_{l=2}^{n/2+1}P_l^{n-2l+k+2}(\Rm).
 \end{align*}
 Then, using Proposition \ref{prop: commutator of partial_t and nabla^k} in the first line, Proposition \ref{prop: evolution of Rm} in the second line, and Proposition \ref{prop: grad and Laplacian commutator} in the third line, we get
 \begin{align*}
  \partial_t\nabla^k\Rm
  & =\nabla^k\partial_t\Rm+\sum_{j=0}^{k-1}\nabla^j(\nabla\partial_t g*\nabla^{k-1-j}\Rm) \\
  & =\frac{(-1)^{\frac{n}{2}+1}}{2(n-2)}\nabla^k\Delta^\frac{n}{2}R_{ijkl}+\nabla^k\sum_{j=2}^{n/2+1}P_j^{n-2j+2}(\Rm)+\sum_{l=2}^{n/2+1}P_l^{n-2l+k+2}(\Rm) \\
  & =\frac{(-1)^{\frac{n}{2}+1}}{2(n-2)}\Delta^\frac{n}{2}\nabla^k R_{ijkl}+P_2^{n+k-2}(\Rm)+\sum_{j=2}^{n/2+1}P_j^{n-2j+k+2}(\Rm)+\sum_{l=2}^{n/2+1}P_l^{n-2l+k+2}(\Rm) \\
  & =\frac{(-1)^{\frac{n}{2}+1}}{2(n-2)}\Delta^\frac{n}{2}\nabla^k R_{ijkl}+\sum_{l=2}^{n/2+1}P_l^{n-2l+k+2}(\Rm).
\end{align*}
\end{proof}

\subsection{Short Time Existence and Uniqueness}
 We recall the short time existence and uniqueness theorems for the AOF. E. Bahuaud and D. Helliwell have shown in their papers \cite{BahuaudHelliwellExistence}, \cite{BahuaudHelliwellUniqueness} the following result:
\begin{theorem} \label{thm: short time existence}
 Let $h$ be a smooth metric on a compact manifold $M$ of even dimension $n\geq 4$. Then there is a unique smooth short time solution to the following flow:
 \begin{equation}
 \begin{cases}
  \partial_t g=\Obhat=(-1)^\frac{n}{2}\Ob+\frac{(-1)^\frac{n}{2}}{2(n-1)(n-2)}(\Delta^{\frac{n}{2}-1}R)g \label{eq: short time existence system} \\
  g(0)=h,
 \end{cases}
 \end{equation}
 where $\mathcal{O}$ is the ambient obstruction tensor on $M$ and $R$ is the scalar curvature of $M$.
\end{theorem}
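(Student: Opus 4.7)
The plan is to apply a DeTurck-type trick adapted to a quasilinear parabolic system of order $n$, converting the degenerate system \eqref{eq: short time existence system} into a strongly parabolic one, and then to transfer existence and uniqueness back via a one-parameter family of diffeomorphisms. This broadly follows the strategy of Bahuaud and Helliwell in \cite{BahuaudHelliwellExistence, BahuaudHelliwellUniqueness}.

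First I would compute the principal symbol of $g\mapsto\Obhat(g)$ using Proposition \ref{prop: Obhat expanded}. The leading contribution comes from $\frac{(-1)^{n/2}}{n-2}\Delta^{n/2-1}\Rc$, and since at top order $\Rc_{ij}=-\tfrac{1}{2}g^{pq}\dif_p\dif_q g_{ij}$ plus second-derivative terms that lie in the pure gauge modes, the symbol of the right-hand side equals $\frac{(-1)^{n/2+1}}{2(n-2)}|\xi|^{n}\,\mathrm{Id}$ on the complement of the diffeomorphism gauge modes and vanishes on those modes; the overall sign of the flow was picked precisely so that the leading operator reads $-c\,\Delta^{n/2}g$ with $c>0$. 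The degeneracy reflects the diffeomorphism invariance of $\Ob$ and of $R$. I would then introduce a DeTurck vector field $W=W(g,h)$, of order $n-1$ in $g$ and the fixed initial metric $h$, chosen so that $\mathcal{L}_{W}g$ cancels exactly the gauge modes of the principal symbol, and consider the modified system
\begin{equation*}
\dif_t \bar g = \Obhat(\bar g) + \mathcal{L}_{W(\bar g,h)}\bar g, \qquad \bar g(0)=h.
\end{equation*}
By construction this is strongly parabolic of order $n$ at $\bar g=h$, so standard theory for quasilinear strongly parabolic systems on a compact manifold yields a short-time smooth solution $\bar g(t)$.

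To pull back to the original flow, solve the ODE $\dif_t \phi_t = -W(\bar g(t),h)\circ\phi_t$ with $\phi_0=\id$ on $[0,T)$ to obtain a family of diffeomorphisms, and verify by the standard pullback computation that $g(t):=\phi_t^*\bar g(t)$ satisfies \eqref{eq: short time existence system}. For uniqueness, given two solutions $g_1,g_2$ with $g_1(0)=g_2(0)=h$, I would run the DeTurck correspondence in reverse: each $g_i$ determines a solution of an auxiliary higher-order harmonic-map-type flow into $(M,h)$, which together with $g_i$ produces a pair of solutions of the gauge-fixed system with identical initial data, forcing them to coincide by uniqueness for the strictly parabolic modified system and hence $g_1\equiv g_2$. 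Alternatively, one may invoke Kotschwar's direct energy argument in \cite{KotschwarEnergyApproachtoUniquenessForHigherOrderGeometricFlows}, which bypasses DeTurck entirely.

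The main obstacle is the design of the DeTurck vector field $W$: the correction $\mathcal{L}_W g$ must neutralize the gauge modes of a principal symbol of order $n$ rather than order $2$ as in Ricci flow. While the classical first-order choice $W^k = g^{ij}(\Gamma_{ij}^k-\tilde\Gamma_{ij}^k)$ suffices there, for the AOF the cancellation must reach order $n-1$, and verifying strong parabolicity of the gauged system requires a careful symbol computation that uses the divergence identity $\nabla^j\Ob_{ij}=0$ and the conformal/diffeomorphism covariance of $\Ob$ from the theorem of Fefferman--Graham and Graham--Hirachi stated above, together with the fact that the conformal correction $\frac{(-1)^{n/2}}{2(n-1)(n-2)}(\Delta^{n/2-1}R)g$ included in $\Obhat$ was inserted precisely to offset the conformal invariance of $\Ob$.
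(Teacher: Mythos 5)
Your proposal follows the same route as the paper: gauge-fix with a higher-order DeTurck vector field (the paper's explicit choice is $W=\tfrac{(-1)^{n/2-1}}{2(n-2)}\Delta^{n/2-1}X+\tfrac{(-1)^{n/2}}{4(n-1)}(\nabla\Delta^{n/2-2}R)^\sharp$ with $X^k=g^{ij}(\Gamma_{ij}^k-\widetilde\Gamma_{ij}^k)$, of order $n-1$ as you predict), verify strong parabolicity via the principal symbol of $\Obhat$ computed from Proposition \ref{prop: Obhat expanded}, solve the modified system, and pull back by the diffeomorphisms generated by $-W$; uniqueness is handled exactly as you indicate, by reversing the DeTurck correspondence as in Bahuaud--Helliwell or by Kotschwar's energy argument. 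The proposal is correct and matches the paper's argument.
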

\begin{proof}
We outline a proof of the existence theorem. Due to the diffeomorphism invariance of $M$, the system \eqref{eq: short time existence system} is not strongly parabolic. However, by choosing a vector field $W$ given by
\begin{equation*}
W=\frac{(-1)^{\frac{n}{2}-1}}{2(n-2)}\Delta^{\frac{n}{2}-1}X+\frac{(-1)^\frac{n}{2}}{4(n-1)}(\nabla\Delta^{\frac{n}{2}-2}R)^\sharp,
\end{equation*} 
where $X^k=g^{ij}(\Gamma_{ij}^k-\widetilde{\Gamma}_{ij}^k)$ and $\widetilde{\Gamma}$ is the connection of $h$, we obtain a strongly parabolic system:
\begin{equation}
\begin{cases}
\partial_t g=\Obhat+\mathcal{L}_Wg \label{eq: short time existence system adjusted} \\
g(0)=h.
\end{cases}
\end{equation}
We show this by computing the principal symbol $\sigma$ of the system \eqref{eq: short time existence system adjusted}. Let $A=-2\Rc+\mathcal{L}_X g$. We know from Proposition \ref{prop: Obhat expanded} that
\begin{equation}
\Obhat=\frac{(-1)^\frac{n}{2}}{n-2}\Delta^{\frac{n}{2}-1}\Rc+\frac{(-1)^{\frac{n}{2}-1}}{2(n-1)}\Delta^{\frac{n}{2}-2}\nabla^2 R+\sum_{j=2}^{n/2}P_j^{n-2j}(\Rm).  \label{eq: adjusted Ob expanded}
\end{equation}
We can linearize the first term of \eqref{eq: adjusted Ob expanded} as follows:
\begin{align*}
\partial_t\left[\frac{(-1)^\frac{n}{2}}{n-2}\Delta^{\frac{n}{2}-1}\Rc+\frac{(-1)^{\frac{n}{2}-1}}{2(n-2)}\Delta^{\frac{n}{2}-1}\mathcal{L}_X g\right]
& =\frac{(-1)^{\frac{n}{2}-1}}{2(n-2)}\partial_t\Delta^{\frac{n}{2}-1}A \\
& =\frac{(-1)^{\frac{n}{2}-1}}{2(n-2)}[g^{*(1-\frac{n}{2})}\partial_t\nabla^{\frac{n}{2}-1}A+\mathrm{lots}] \\
& =\frac{(-1)^{\frac{n}{2}-1}}{2(n-2)}\Delta^{\frac{n}{2}-1}\partial_t A+\mathrm{lots} \\
& =\frac{(-1)^{\frac{n}{2}-1}}{2(n-2)}\Delta^{\frac{n}{2}-1}\partial_t g+\mathrm{lots}.
\end{align*}
We used Proposition \ref{prop: commutator of partial_t and nabla^k} in the third line and the fact from Ricci flow (Chow-Knopf \cite{ChowKnopfRicciFlowIntro}, Theorem 3.13) that $\sigma[DA](\zeta)=|\zeta|^2$ in the fourth line. Let $Y=\frac{(-1)^\frac{n}{2}}{4(n-1)}(\nabla\Delta^{\frac{n}{2}-2}R)^\sharp$. The second term of \eqref{eq: adjusted Ob expanded} can be asorbed into $\mathcal{L}_W g$:
\begin{align*}
\frac{(-1)^{\frac{n}{2}-1}}{2(n-1)}\Delta^{\frac{n}{2}-2}\nabla_i\nabla_j R+(\mathcal{L}_Y g)_{ij}
& =\begin{aligned}[t]
\frac{(-1)^{\frac{n}{2}-1}}{4(n-1)}\Delta^{\frac{n}{2}-2}\nabla_i\nabla_j R+\frac{(-1)^{\frac{n}{2}-1}}{4(n-1)}\Delta^{\frac{n}{2}-2}\nabla_j\nabla_i R+\frac{(-1)^{\frac{n}{2}}}{4(n-1)}\Delta^{\frac{n}{2}-2}\nabla_i\nabla_j R \\
{}+\frac{(-1)^{\frac{n}{2}}}{4(n-1)}\Delta^{\frac{n}{2}-2}\nabla_j\nabla_i R+\mathrm{lots}
\end{aligned} \\
& =\mathrm{lots}.
\end{align*}
We commuted $\nabla_i$ and $\nabla_j$ and used Proposition \ref{prop: grad and Laplacian commutator} to commute $\Delta^{\frac{n}{2}-2}$ and $\nabla_i\nabla_j$.
So the principal symbol of the system \eqref{eq: short time existence system adjusted} is
\begin{align*}
\sigma[D(\Obhat+\mathcal{L}_Wg)](\zeta)=\frac{(-1)^{\frac{n}{2}-1}}{2(n-2)}|\zeta|^\frac{n}{2}.
\end{align*}
Since the PDE has order $n$ with respect to $g$, \eqref{eq: short time existence system adjusted} is strongly parabolic.

 So there exists $\epsilon>0$ for which the solution to \eqref{eq: short time existence system adjusted} exists for $t\in[0,\epsilon)$ via parabolic PDE theory.  Next, there exists a family $\phi_t:M\to M$ of diffeomorphisms satisfying
\begin{equation*}
\begin{cases}
\frac{\partial\phi_t}{\partial t} = -W(\phi_t,t) \\
\phi_0 = \textrm{id}_M.
\end{cases}
\end{equation*}
for $t\in[0,\epsilon)$. The existence of the $\phi_t$ follows from the existence and uniqueness theorem for nonautonomous ODE on manifolds, and the uniform $\epsilon$ follows from bounds on $W$ that result from the compactness of $M$. We now show that $\partial_t(\phi_t^*g)=\phi_t^*\partial_t g$. First, if $p\in M$ and $v_1,v_2\in T_pM$,
\begin{align*}
\lim_{s\to 0}\frac{\phi_{s+t}^*g(s+t)(v_1,v_2)-\phi_{s+t}^*g(t)(v_1,v_2)}{s} & =\lim_{s\to 0}\frac{g(s+t)_{ij}-g(t)_{ij}}{s}\lim_{s\to 0}[((\phi_{s+t})_*v_1)^i((\phi_{s+t})_*v_2)^j] \\
& =(\partial_t g)_{ij}(\phi_{t*}v_1)^i(\phi_{t*}v_2)^j \\
& =\phi_t^*\partial_t g(v_1,v_2).
\end{align*}
So
\begin{align*}
\partial_t(\phi_t^*g) & =\lim_{s\to 0}\frac{\phi_{s+t}^*g(s+t)-\phi_t^*g(t)}{s} \\
& =\lim_{s\to 0}\frac{\phi_{s+t}^*g(s+t)-\phi_{s+t}^*g(t)}{s}+\lim_{s\to 0}\frac{\phi_{s+t}^*g(t)-\phi_t^*g(t)}{s} \\
& =\phi_t^*\partial_t g+\partial_s|_{s=0}(\phi_{t+s}^*g(t)) \\
& =\phi_t^*[(-1)^\frac{n}{2}\Ob(g)+\tfrac{(-1)^\frac{n}{2}}{2(n-1)(n-2)}[\Delta^{\frac{n}{2}-1}R(g(t))]g(t)+\mathcal{L}_W g(t)]+\partial_s|_{s=0}[(\phi_t^{-1}\circ\phi_{t+s})^*\phi_t^*g(t)] \\
& =(-1)^\frac{n}{2}\Ob(\phi_t^*g(t))+\tfrac{(-1)^\frac{n}{2}}{2(n-1)(n-2)}[\Delta^{\frac{n}{2}-1}R(\phi_t^*g(t))]\phi_t^*g(t)+\phi_t^*(\mathcal{L}_W g(t))-\mathcal{L}_{[(\phi_t^{-1})_*W(t)]}(\phi_t^*g(t)) \\
& =(-1)^\frac{n}{2}\Ob(\phi_t^*g(t))+\tfrac{(-1)^\frac{n}{2}}{2(n-1)(n-2)}[\Delta^{\frac{n}{2}-1}R(\phi_t^*g(t))]\phi_t^*g(t).
\end{align*}
Since $\phi_0^*g(0)=g(0)=h$, $\phi_t^*g(t)$ satisfies \eqref{eq: short time existence system}. Therefore these diffeomorphisms pull back the short time solution of \eqref{eq: short time existence system adjusted} to give a solution of \eqref{eq: short time existence system} that exists for  $t\in[0,\epsilon)$.
\end{proof}

\section{Local Integral Estimates} \label{sec: Local Integral Estimates}
In this section, let $(M^n,g)$ be a Riemannian manifold that is a solution to the AOF on a time interval $[0,T)$. We give local $L^2$ estimates for $\nabla^k\Rm$ for all $k\in\N$. We need to use local $L^2$ estimates since we can only convert $L^2$ estimates to pointwise estimates locally. These local pointwise estimates are used in the proof of the pointwise smoothing estimates given in Theorem \ref{thm:C^m smoothing estimate along AOF}. Specify the Laplace operator by $\Delta=-\nabla^*\nabla$. Let $\phi\in C_c^\infty(M)$ be a cutoff function with constants $\Lambda,\Lambda_1>0$ such that
\begin{equation*}
 \sup_{t\in[0,T)}|\nabla\phi|\leq\Lambda_1,\quad \max_{0\leq i\leq\frac{n}{2}}\sup_{t\in[0,T)}|\nabla^i\phi|\leq \Lambda.
\end{equation*}
\begin{lemma} \label{iterated Laplacian inner product}
 Suppose $M,\phi$ satisfy the above hypotheses. Let $A$ be any tensor and $p\geq 1,q\geq 2$. Then
 \begin{equation*}
  \int_M\phi^p\langle\Delta^qA,A\rangle=(-1)^q\int_{[\phi>0]}\sum_{i=0}^q P_p^{q-i}(\phi)*\nabla^i A*\nabla^q A+\int_M\sum_{i=0}^{2q-2}\phi^p\nabla^{2q-2-i}\Rm*\nabla^i A*A.
 \end{equation*}
\end{lemma}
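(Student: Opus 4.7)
The plan is to prove the identity by induction on $q \geq 2$. For the base case $q = 2$, I integrate by parts twice on $\int_M \phi^p \langle \Delta^2 A, A \rangle = \int_M \phi^p \nabla^a\nabla_a (\Delta A) \cdot A$: each IBP either passes a derivative onto the $\phi^p$ factor (generating a $P_p^m(\phi)$-type factor), or commutes $\nabla$ past $\Delta$ via Proposition \ref{prop: grad and Laplacian commutator} (generating a curvature commutator $\Rm$-term). After collecting, one obtains the stated right-hand side for $q = 2$: the main piece $\int \phi^p |\nabla^2 A|^2$ is the $i = 2$ contribution to the first sum; the $P_p^1$ and $P_p^2$ contributions come from the $\phi$-derivative corrections; and the curvature commutators fit the second sum.

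For the inductive step, assume the lemma for $q - 1$. One integration by parts gives
\[
\int_M \phi^p \langle \Delta^q A, A\rangle = -\int_M \phi^p \langle \nabla A, \nabla\Delta^{q-1} A\rangle - \int_M p\phi^{p-1}\nabla\phi * A * \nabla\Delta^{q-1} A.
\]
Apply Proposition \ref{prop: grad and Laplacian commutator} with $k = 1,\ l = q - 1$ to write
\[
\nabla\Delta^{q-1} A = \Delta^{q-1} \nabla A + \sum_{j=0}^{2q-3} \nabla^{2q-3-j}\Rm * \nabla^j A;
\]
the curvature correction terms, paired with $\nabla A$ or $A$, land in the second sum of the lemma (possibly after a further IBP on the $\nabla\phi$ factor using $p\phi^{p-1}\nabla\phi = \nabla(\phi^p)$ to convert any $P_p^1$ prefactor into $\phi^p$ times higher-order derivatives). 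The principal leftover $-\int \phi^p \langle \Delta^{q-1} \nabla A, \nabla A\rangle$ is exactly in the form of the lemma with $q$ replaced by $q - 1$ and $A$ replaced by the tensor $\nabla A$; applying the inductive hypothesis and relabeling $i \mapsto i + 1$ produces the contributions to the first sum with indices $i = 1, \ldots, q$ (with sign $(-1)^q = -(-1)^{q-1}$) plus additional curvature terms of the required form. The missing $i = 0$ term $P_p^q(\phi) * A * \nabla^q A$ is recovered from the residual $-\int p\phi^{p-1}\nabla\phi * A * \nabla\Delta^{q-1}A$ by further iterated IBP: each step either promotes the $\nabla\phi$ factor to a higher-order $P_p^m(\phi)$ or shifts a derivative onto $A$ via the commutator formula, generating in succession the missing principal contribution and more curvature terms.

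The main obstacle is bookkeeping: one must verify that every term produced by iterated IBP and the Proposition \ref{prop: grad and Laplacian commutator} commutators lands in one of the two categories with the correct index range. Because $*$ denotes arbitrary universal contractions with the metric, and $P_p^m(\phi)$ sums over all distributions of $m$ derivatives across $p$ copies of $\phi$, the verification essentially reduces to tracking the total derivative count (always $2q$) and the structural type of each factor—whether an $\Rm$-factor is present (putting it in the second sum) or only $\phi$-derivatives and $A$-derivatives appear (putting it in the first sum).
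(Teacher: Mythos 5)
Your route is genuinely different from the paper's, and as written it has a real gap. The paper does not induct on the integral identity at all: it first proves the \emph{pointwise} identity $\Delta^q A=(-1)^q(\nabla^*)^q\nabla^q A+\sum_{i=0}^{2q-2}\nabla^{2q-2-i}\Rm*\nabla^iA$ by induction (using Proposition \ref{prop: grad and Laplacian commutator}), pairs it with $\phi^pA$ so that every curvature commutator is multiplied by the \emph{undifferentiated} weight $\phi^p$ and the \emph{undifferentiated} factor $A$, and then handles the principal term in a single step via the formal adjoint together with the Leibniz rule for $\nabla^q(\phi^pA)$, which is the only place derivatives ever land on $\phi$. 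This separation is exactly what makes both sums on the right-hand side come out in the stated form.

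Your interleaved induction destroys that separation. Concretely: applying the inductive hypothesis to $-\int\phi^p\langle\Delta^{q-1}\nabla A,\nabla A\rangle$ returns curvature terms of the form $\phi^p\,\nabla^{2q-4-i}\Rm*\nabla^{i+1}A*\nabla A$, whose last factor is $\nabla A$ rather than $A$; the only way to strip that derivative is another integration by parts, which differentiates $\phi^p$ and produces terms of the type $P_p^1(\phi)*\nabla^{a}\Rm*\nabla^{b}A*\nabla^{c}A$. The same happens when you integrate the residual $-\int P_p^1(\phi)*\nabla\Delta^{q-1}A*A$ by parts $q-1$ more times while commuting $\nabla$ past $\Delta^{q-1}$. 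Such mixed terms fit \emph{neither} sum of the lemma: the first sum admits no $\Rm$ factor, and the second sum carries exactly the weight $\phi^p$ (no derivatives of $\phi$) and an undifferentiated copy of $A$. Your proposed dichotomy --- ``$\Rm$ present $\Rightarrow$ second sum, otherwise first sum'' --- is therefore too coarse, and the bookkeeping you defer is precisely where the argument fails to close. The repair is to reorganize as the paper does: extract all commutators pointwise before integrating, then integrate by parts only once, in adjoint form.
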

\begin{proof}
 We first claim that if $q\geq 2$, then
 \begin{equation*}
  \Delta^q A=(-1)^q(\nabla^*)^q\nabla^q A+\sum_{i=0}^{2q-2}\nabla^{2q-2-i}\Rm*\nabla^i A.
 \end{equation*}
 If $q=2$, we get, using Proposition \ref{prop: grad and Laplacian commutator}, that
 \begin{align*}
  \Delta^2 A & =-\nabla^*\nabla\Delta A \\
  & = -\nabla^*\Delta\nabla A+\nabla^*[\nabla\Rm*A+\Rm*\nabla A] \\
  & =(\nabla^*)^2\nabla^2A+\nabla^2\Rm*A+\nabla\Rm*\nabla A+\Rm*\nabla^2 A,
 \end{align*}
 which agrees with the claim. Suppose the claim is true for every integer less than $q$. First,
 \begin{align*}
  \Delta^q A &= -\nabla^*\nabla\Delta^{q-1}A \\
  & =-\nabla^*\left[\Delta^{q-1}\nabla A+\sum_{i=0}^{2q-3}\nabla^{2q-3-i}\Rm*\nabla^i A\right] \\
  & =-\nabla^*\Delta^{q-1}\nabla A+\sum_{i=0}^{2q-3}\left[\nabla^{2q-2-i}\Rm*\nabla^i A+\nabla^{2q-3-i}\Rm*\nabla^{i+1}A\right] \\
  & =-\nabla^*\Delta^{q-1}\nabla A+\sum_{i=0}^{2q-3}\nabla^{2q-2-i}\Rm*\nabla^i A+\sum_{i=1}^{2q-2}\nabla^{2q-2-i}\Rm*\nabla^i A \\
  & =-\nabla^*\Delta^{q-1}\nabla A+\sum_{i=0}^{2q-2}\nabla^{2q-2-i}\Rm*\nabla^i A.
 \end{align*}
 Applying the last equation above and then the inductive hypothesis, 
 \begin{align*}
  \Delta^q A & =-\nabla^*\Delta^{q-1}\nabla A+\sum_{i=0}^{2q-2}\nabla^{2q-2-i}\Rm*\nabla^i A \\
  & =-\nabla^*\left[(-1)^{q-1}(\nabla^*)^{q-1}\nabla^{q-1}\nabla A+\sum_{i=0}^{2q-4}\nabla^{2q-4-i}\Rm*\nabla^i\nabla A\right]+\sum_{i=0}^{2q-2}\nabla^{2q-2-i}\Rm*\nabla^i A \\
  & =(-1)^q(\nabla^*)^q\nabla^q A+\sum_{i=0}^{2q-4}\nabla^{2q-3-i}\Rm*\nabla^{i+1} A+\sum_{i=0}^{2q-4}\nabla^{2q-4-i}\Rm*\nabla^{i+2} A+\sum_{i=0}^{2q-2}\nabla^{2q-2-i}\Rm*\nabla^i A \\
  & =(-1)^q(\nabla^*)^q\nabla^q A+\sum_{i=1}^{2q-3}\nabla^{2q-2-i}\Rm*\nabla^i A+\sum_{i=2}^{2q-2}\nabla^{2q-2-i}\Rm*\nabla^i A+\sum_{i=0}^{2q-2}\nabla^{2q-2-i}\Rm*\nabla^i A \\
  & =(-1)^q(\nabla^*)^q\nabla^q A+\sum_{i=0}^{2q-2}\nabla^{2q-2-i}\Rm*\nabla^i A.
 \end{align*}
 This proves the claim. We compute
 \begin{align*}
  (-1)^{q+1}\int_M\nabla^q A*\nabla^q(\phi^p A)
  & =(-1)^{q+1}\int_M\nabla^q A*\sum_{i=0}^q\nabla^{q-i}(\phi^p)*\nabla^i A \\
  & =(-1)^{q+1}\int_{[\phi>0]}\sum_{i=0}^q\sum_{|\alpha|=q-i}\nabla^{\alpha_1}\phi_1*\cdots*\nabla^{\alpha_p}\phi_p*\nabla^i A*\nabla^q A \\
  & =(-1)^{q+1}\int_{[\phi>0]}\sum_{i=0}^q P_p^{q-i}(\phi)*\nabla^i A*\nabla^q A.
 \end{align*}
 Finally, applying the claim,
 \begin{align*}
  \int_M\phi^p\langle\Delta^qA,A\rangle
  & =\int_M\phi^p\left\langle(-1)^q(\nabla^*)^q\nabla^q A+\sum_{i=0}^{2q-2}\nabla^{2q-2-i}\Rm*\nabla^i A,A\right\rangle \\
  & =(-1)^q\int_M\nabla^q A*\nabla^q(\phi^p A)+\int_M\sum_{i=0}^{2q-2}\phi^p\nabla^{2q-2-i}\Rm*\nabla^i A*A \\
  & =(-1)^q\int_{[\phi>0]}\sum_{i=0}^q P_p^{q-i}(\phi)*\nabla^i A*\nabla^q A+\int_M\sum_{i=0}^{2q-2}\phi^p\nabla^{2q-2-i}\Rm*\nabla^i A*A.
 \end{align*}
\end{proof}

\begin{proposition} \label{prop: time derivative for local L2 norm of nabla^k Rm}
 Suppose $M,\phi$ satisfy the above hypotheses. If $p\geq 1,k\geq 0$, then
 \begin{multline} \label{eq: time derivative eqn for local L2 norm of nabla^k Rm}
  \frac{\partial}{\partial t}\int_M\phi^p|\nabla^k\Rm|^2
  = -\frac{1}{n-2}\int\limits_M\phi^p|\nabla^{\frac{n}{2}+k}\Rm|^2+\int\limits_M\phi^p\sum_{l=k}^{\frac{n}{2}+k-1}P_{\frac{n}{2}+k-l+2}^{2l}(\Rm) \\
  +\int\limits_{[\phi>0]}\sum_{i=0}^{\frac{n}{2}-1}P_p^{\frac{n}{2}-i}(\phi)*\nabla^{k+i}\Rm*\nabla^{k+\frac{n}{2}}\Rm.
 \end{multline}
\end{proposition}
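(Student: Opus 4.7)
The plan is to differentiate under the integral sign, substitute the evolution equation for $\nabla^k\Rm$, and then integrate the top-order term by parts using Lemma \ref{iterated Laplacian inner product}. Since $\phi$ does not depend on $t$,
\[
\frac{\partial}{\partial t}\int_M\phi^p|\nabla^k\Rm|^2 = \int_M\phi^p\,\partial_t|\nabla^k\Rm|^2 + \int_M\phi^p|\nabla^k\Rm|^2\,\partial_t(dV_g),
\]
with $\partial_t|\nabla^k\Rm|^2 = 2\langle\partial_t\nabla^k\Rm,\nabla^k\Rm\rangle + \partial_t g*\nabla^k\Rm*\nabla^k\Rm$ and $\partial_t dV_g = \tfrac{1}{2}\tr_g(\partial_t g)\,dV_g$. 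From $\partial_t g = \Obhat$ and Proposition \ref{prop: Obhat expanded}, I may treat $\partial_t g$ schematically as lying in $\sum_{j=1}^{n/2}P_j^{n-2j}(\Rm)$. Substituting Proposition \ref{prop: evolution of nabla^k Rm} yields
\[
2\langle\partial_t\nabla^k\Rm,\nabla^k\Rm\rangle = \frac{(-1)^{n/2+1}}{n-2}\langle\Delta^{n/2}\nabla^k\Rm,\nabla^k\Rm\rangle + \sum_{l=2}^{n/2+1}P_l^{n-2l+k+2}(\Rm)*\nabla^k\Rm.
\]

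The principal term is then handled by Lemma \ref{iterated Laplacian inner product} applied with $A=\nabla^k\Rm$ and $q=n/2$:
\[
\int_M\phi^p\langle\Delta^{n/2}\nabla^k\Rm,\nabla^k\Rm\rangle = (-1)^{n/2}\!\!\int_{[\phi>0]}\sum_{i=0}^{n/2}P_p^{n/2-i}(\phi)*\nabla^{k+i}\Rm*\nabla^{k+n/2}\Rm + \int_M\sum_{i=0}^{n-2}\phi^p\nabla^{n-2-i}\Rm*\nabla^{k+i}\Rm*\nabla^k\Rm.
\]
After multiplying by $(-1)^{n/2+1}/(n-2)$, I split off the $i = n/2$ summand, where $P_p^0(\phi)=\phi^p$ produces exactly the leading $-\tfrac{1}{n-2}\int_M\phi^p|\nabla^{n/2+k}\Rm|^2$. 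The summands $i = 0,\dots,n/2-1$ assemble into the cutoff integral $\int_{[\phi>0]}\sum_{i=0}^{n/2-1}P_p^{n/2-i}(\phi)*\nabla^{k+i}\Rm*\nabla^{k+n/2}\Rm$ of the claim. The commutator integrand $\sum_{i=0}^{n-2}\phi^p\nabla^{n-2-i}\Rm*\nabla^{k+i}\Rm*\nabla^k\Rm$ has three curvature factors with $n-2+2k$ total derivatives, so it is of $P_3^{n-2+2k}(\Rm)$ type integrated against $\phi^p$.

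The main obstacle is the purely combinatorial bookkeeping that verifies every remaining curvature contribution collapses into the schematic form $\phi^p P_{n/2+k-l+2}^{2l}(\Rm)$ for some $l\in\{k,\dots,n/2+k-1\}$. The lower-order summand $P_l^{n-2l+k+2}(\Rm)*\nabla^k\Rm = P_{l+1}^{n-2l+2k+2}(\Rm)$ with $l\in\{2,\dots,n/2+1\}$ matches the index $l' = n/2-l+k+1\in\{k,\dots,n/2+k-1\}$. The metric-variation contributions $\partial_t g*\nabla^k\Rm*\nabla^k\Rm$ and $\tr_g(\partial_t g)\,|\nabla^k\Rm|^2$ become $P_{j+2}^{n-2j+2k}(\Rm)$ for $j\in\{1,\dots,n/2\}$, matching $l' = n/2-j+k\in\{k,\dots,n/2+k-1\}$. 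Finally, the $P_3^{n-2+2k}(\Rm)$ commutator from Lemma \ref{iterated Laplacian inner product} corresponds to $l'=n/2+k-1$. Once these reindexings are verified, combining the displayed ingredients yields the stated identity.
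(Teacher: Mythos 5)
Your proposal is correct and follows essentially the same route as the paper: differentiate under the integral, substitute Proposition \ref{prop: evolution of nabla^k Rm}, apply Lemma \ref{iterated Laplacian inner product} with $q=n/2$ to isolate the $i=n/2$ summand as the good negative term, and reindex the remaining cubic-and-higher curvature terms into $\sum_{l=k}^{n/2+k-1}P_{n/2+k-l+2}^{2l}(\Rm)$. Your reindexings (the commutator term matching $l=n/2+k-1$, the metric-variation terms matching $l=n/2+k-j$, and the lower-order evolution terms matching $l=n/2+k+1-l_{\mathrm{old}}$) all check out against the paper's computation.
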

\begin{proof}
 First, we have
 \begin{equation*}
  \frac{\partial}{\partial t}\int_M\phi^p|\nabla^k\Rm|^2\,dV_g=2\int_M\phi^p\left\langle\frac{\partial}{\partial t}\nabla^k\Rm,\nabla^k\Rm\right\rangle\,dV_g+\int_M\phi^p|\nabla^k\Rm|^2\frac{\partial g}{\partial t}\,dV_g.
 \end{equation*}
 We can expand the first integral by substituting Proposition \ref{prop: evolution of nabla^k Rm}, which states that for our flow,
 \begin{equation*}
  \frac{\partial}{\partial t}\nabla^k\Rm=\frac{(-1)^{\frac{n}{2}+1}}{2(n-2)}\Delta^\frac{n}{2}\nabla^k\Rm+\sum_{i=2}^{\frac{n}{2}+1}P_i^{n-2i+k+2}(\Rm).
 \end{equation*}
 Applying Lemma \ref{iterated Laplacian inner product} to the first term of $\frac{\partial}{\partial t}\nabla^k\Rm$ gives that
 \begin{align*}
  \frac{(-1)^{\frac{n}{2}+1}}{n-2}\int\limits_M\phi^p\langle\Delta^\frac{n}{2}\nabla^k\Rm,\nabla^k\Rm\rangle
  & =\begin{aligned}[t]
      & \frac{(-1)^{n+1}}{n-2}\int\limits_{[\phi>0]}\sum_{i=0}^\frac{n}{2}P_p^{\frac{n}{2}-i}(\phi)*\nabla^{k+i}\Rm*\nabla^{k+\frac{n}{2}}\Rm \\
      & {}+\int\limits_M\sum_{i=0}^{n-2}\phi^p\nabla^{n-2-i}\Rm*\nabla^{k+i}\Rm*\nabla^k\Rm
     \end{aligned} \\
  & =\begin{aligned}[t]
      & -\frac{1}{n-2}\int\limits_M\phi^p|\nabla^{\frac{n}{2}+k}\Rm|^2 \\
      & {}+\int\limits_{[\phi>0]}\sum_{i=0}^{\frac{n}{2}-1}P_p^{\frac{n}{2}-i}(\phi)*\nabla^{k+i}\Rm*\nabla^{k+\frac{n}{2}}\Rm \\
      & {}+\int\limits_M\phi^p P_3^{n+2k-2}(\Rm).
     \end{aligned}
 \end{align*}
 Substituting the second term of $\frac{\partial}{\partial t}\nabla^k\Rm$ into the inner product gives that
 \begin{align*}
  \int\limits_M\phi^p\left\langle\nabla^k\Rm,\sum_{i=2}^{\frac{n}{2}+1}P_i^{n-2i+k+2}(\Rm)\right\rangle
  & =\int\limits_M\phi^p\sum_{i=3}^{\frac{n}{2}+2}P_i^{n-2i+k+4}(\Rm) \\
  & =\int\limits_M\phi^p\sum_{l=k}^{\frac{n}{2}+k-1}P_{\frac{n}{2}+k-l+2}^{2l}(\Rm).
 \end{align*}
 Since 
 \begin{align*}
  \frac{\partial g}{\partial t} &= \Delta^{\frac{n}{2}-1}\Rc+\Delta^{\frac{n}{2}-2}\nabla^2 R+\sum_{i=2}^\frac{n}{2}P_i^{n-2i}(\Rm) \\
  &=\nabla^{n-2}\Rm+\nabla^{n-4+2}\Rm+\sum_{i=2}^\frac{n}{2}P_i^{n-2i}(\Rm) \\
  &=\sum_{i=1}^\frac{n}{2}P_i^{n-2i}(\Rm),
 \end{align*}
 we have
 \begin{align*}
  \int_M\phi^p|\nabla^k\Rm|^2\frac{\partial g}{\partial t}
  & =\int_M\phi^p(\nabla^k\Rm)^{*2}\sum_{i=1}^\frac{n}{2}P_i^{n-2i}(\Rm) \\
  & =\int_M\phi^p\sum_{i=1}^\frac{n}{2}P_{i+2}^{n-2i+2k}(\Rm) \\
  & =\int_M\phi^p\sum_{i=3}^{\frac{n}{2}+2}P_i^{n-2i+2k+4}(\Rm) \\
  & =\int_M\phi^p\sum_{l=k}^{\frac{n}{2}+k-1}P_{\frac{n}{2}+k-l+2}^{2l}(\Rm).
 \end{align*}
 Combining all of these results yields
\begin{align*}
\begin{split}
\frac{\partial}{\partial t}\int\limits_M\phi^p|\nabla^k\Rm|^2 ={}
& -\frac{1}{n-2}\int\limits_M\phi^p|\nabla^{\frac{n}{2}+k}\Rm|^2+\int\limits_{[\phi>0]}\sum_{i=0}^{\frac{n}{2}-1}P_p^{\frac{n}{2}-i}(\phi)*\nabla^{k+i}\Rm*\nabla^{k+\frac{n}{2}}\Rm \\
& {}+\int\limits_M\phi^p P_3^{n+2k-2}(\Rm)+\int\limits_M\phi^p\sum_{l=k}^{\frac{n}{2}+k-1}P_{\frac{n}{2}+k-l+2}^{2l}(\Rm)+\int\limits_M\phi^p\sum_{l=k}^{\frac{n}{2}+k-1}P_{\frac{n}{2}+k-l+2}^{2l}(\Rm)
\end{split} \\
\begin{split}
={} & -\frac{1}{n-2}\int\limits_M\phi^p|\nabla^{\frac{n}{2}+k}\Rm|^2+\int\limits_{[\phi>0]}\sum_{i=0}^{\frac{n}{2}-1}P_p^{\frac{n}{2}-i}(\phi)*\nabla^{k+i}\Rm*\nabla^{k+\frac{n}{2}}\Rm \\
& {}+\int\limits_M\phi^p\sum_{l=k}^{\frac{n}{2}+k-1}P_{\frac{n}{2}+k-l+2}^{2l}(\Rm).
\end{split}
 \end{align*}
\end{proof}

We estimate the last two terms of \eqref{eq: time derivative eqn for local L2 norm of nabla^k Rm}. First, we recall two corollaries from the paper \cite{KuwertSchaetzleGradientFlowWillmoreFunctional} of E. Kuwert and R. Sch\"{a}tzle.
\begin{proposition} \label{prop: K-S Corollary 5.2}
 (\cite{KuwertSchaetzleGradientFlowWillmoreFunctional}, Corollary 5.2) Suppose $M,\phi$ satisfy the above hypotheses. Let $A$ be a tensor. For $2\leq p<\infty$, $s\geq p$, and $c=c(n,p,s,\Lambda_1)$,
 \begin{equation*}
  \left(\int_M|\nabla A|^p\phi^s\right)^\frac{1}{p}
  \leq\epsilon\left(\int_M|\nabla^2 A|^p\phi^{s+p}\right)^\frac{1}{p}
  +\frac{c}{\epsilon}\left(\int_{[\phi>0]}|A|^p\phi^{s-p}\right)^\frac{1}{p}.
 \end{equation*}
\end{proposition}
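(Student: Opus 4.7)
The plan is to prove this weighted interpolation inequality via an integration by parts argument followed by H\"{o}lder's and Young's inequalities. I begin with the identity
\[
\int_M \phi^s |\nabla A|^p = \int_M \phi^s |\nabla A|^{p-2}\, g^{ij}\langle \nabla_i A, \nabla_j A\rangle,
\]
and integrate by parts in the factor $\nabla_j A$. Since $\phi$ has compact support there is no boundary term, and the derivative distributes over three factors: hitting $\phi^s$ yields $s\phi^{s-1}\nabla\phi$; hitting $|\nabla A|^{p-2}$ yields a term controlled by $(p-2)|\nabla A|^{p-3}|\nabla^2 A|$; and hitting $\nabla_i A$ yields a term with $|\Delta A|\leq C|\nabla^2 A|$. (For $p$ near $2$ the factor $|\nabla A|^{p-2}$ is not smooth at zeros of $\nabla A$, but this is handled by the standard regularization $(|\nabla A|^2+\delta)^{(p-2)/2}$ and passage to the limit.) Using $|\nabla\phi|\leq \Lambda_1$ and taking absolute values pointwise, this yields
\[
\int_M \phi^s |\nabla A|^p \leq s\Lambda_1 \int_M \phi^{s-1}|\nabla A|^{p-1}|A| + C(p) \int_M \phi^s |\nabla A|^{p-2}|\nabla^2 A| |A|.
\]

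Next I would apply H\"{o}lder's inequality to each term, carefully splitting the powers of $\phi$ so the resulting norms match those in the statement. For the first integrand, write
\[
\phi^{s-1}|\nabla A|^{p-1}|A| = \bigl(\phi^s|\nabla A|^p\bigr)^{(p-1)/p} \cdot \bigl(\phi^{s-p}|A|^p\bigr)^{1/p},
\]
with exponent balance $s(p-1)/p + (s-p)/p = s-1$; the hypothesis $s\geq p$ ensures $\phi^{s-p}\geq 0$ is an admissible weight. For the second integrand, use the three-term H\"{o}lder decomposition
\[
\phi^s |\nabla A|^{p-2}|\nabla^2 A||A| = \bigl(\phi^s|\nabla A|^p\bigr)^{(p-2)/p} \cdot \bigl(\phi^{s+p}|\nabla^2 A|^p\bigr)^{1/p} \cdot \bigl(\phi^{s-p}|A|^p\bigr)^{1/p},
\]
with exponent balance $s(p-2)/p + (s+p)/p + (s-p)/p = s$. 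Writing $I = \bigl(\int_M \phi^s |\nabla A|^p\bigr)^{1/p}$, $J = \bigl(\int_M \phi^{s+p}|\nabla^2 A|^p\bigr)^{1/p}$, and $K = \bigl(\int_{[\phi>0]} \phi^{s-p}|A|^p\bigr)^{1/p}$, H\"{o}lder's inequality then converts the pointwise bound into
\[
I^p \leq C_1\, I^{p-1} K + C_2\, I^{p-2} J K,
\]
with $C_1=C_1(p,s,\Lambda_1)$ and $C_2=C_2(n,p)$.

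Assuming $I>0$ (otherwise the conclusion is trivial), divide through by $I^{p-2}$ to obtain $I^2 \leq C_1 I K + C_2 J K$. Apply Young's inequality to absorb $C_1 I K \leq \tfrac{1}{2}I^2 + C_1'K^2$, and split $C_2 J K \leq \tfrac{1}{2}\epsilon^2 J^2 + C_2'\epsilon^{-2}K^2$ with a small parameter. Combining and taking a square root gives $I \leq \epsilon J + (c/\epsilon) K$ with $c=c(n,p,s,\Lambda_1)$, which is exactly the stated inequality after renaming $\epsilon$. The main technical obstacle is the H\"{o}lder exponent bookkeeping: the splittings must balance powers of $|\nabla A|$ and powers of $\phi$ simultaneously, and it is precisely the hypothesis $s\geq p$ that makes $\phi^{s-p}$ available as a nonnegative weight. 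Once the correct splittings are found, the remaining passage from $I^2 \leq C_1 IK + C_2 JK$ to $I\leq \epsilon J + c\epsilon^{-1}K$ is a standard interpolation trick.
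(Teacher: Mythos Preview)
The paper does not prove this proposition; it is quoted verbatim from Kuwert--Sch\"atzle \cite{KuwertSchaetzleGradientFlowWillmoreFunctional} and used as a black box. Your argument is correct and is essentially the standard proof (and the one in the original source): integrate by parts to move one copy of $\nabla$ off $\nabla A$ onto the remaining factors, bound the resulting three terms pointwise, and then apply H\"older with exactly the exponent splittings you wrote down, followed by Young's inequality. Your exponent bookkeeping on the powers of $\phi$ is right, and the condition $s\ge p$ is used precisely where you say, to make $\phi^{s-p}$ an admissible weight.

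One small remark: after Young you get $I^2 \le \epsilon^2 J^2 + (C_1' + C_2'\epsilon^{-2})K^2$, and the term $C_1'K^2$ carries no $\epsilon^{-2}$. This means the inequality in the form $I\le \epsilon J + (c/\epsilon)K$ with a single constant $c$ only holds for $\epsilon$ bounded above (say $\epsilon\le 1$), since then $C_1'\le C_1'/\epsilon^2$. This is harmless---every application in the paper uses small $\epsilon$---but it is worth stating the restriction explicitly, since the inequality is actually false for arbitrarily large $\epsilon$ (take $A$ affine on Euclidean space so that $J=0$).
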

\begin{proposition} \label{prop: K-S Corollary 5.5}
(\cite{KuwertSchaetzleGradientFlowWillmoreFunctional}, Corollary 5.5) Suppose $M,\phi$ satisfy the above hypotheses. Let $A$ be a tensor. Let $0\leq i_1,\dots,i_r\leq k$, $i_1+\dots+i_r=2k$, and $s\geq 2k$. Then we have
 \begin{equation*}
  \left|\int_M\phi^s\nabla^{i_1}A*\dots*\nabla^{i_r}A\right|\leq c\|A\|_\infty^{r-2}\left(\int_M\phi^s|\nabla^k A|^2\,dV+\|A\|_{2,[\phi>0]}^2\right),
 \end{equation*}
 where $c=c(k,n,r,s,\Lambda_1)$.
\end{proposition}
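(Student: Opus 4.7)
This is a weighted multilinear Gagliardo--Nirenberg-type estimate, and my plan is to combine a H\"older decomposition of the product with an iterated weighted interpolation derived from Proposition \ref{prop: K-S Corollary 5.2}.

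First, any index with $i_j = 0$ is handled by pulling a factor of $\|A\|_{\infty,[\phi>0]}$ outside the integral, which will contribute to the final $\|A\|_\infty^{r-2}$. For the remaining indices, the conjugate exponents $p_j = 2k/i_j$ satisfy $\sum p_j^{-1} \leq 1$, with equality when every $i_j$ is positive (and any slack is absorbed by another $\|A\|_\infty$ factor). Distributing the weight as $\phi^s = \prod \phi^{s\,i_j/(2k)}$ and applying H\"older's inequality reduces the problem to bounding
\begin{equation*}
\prod_{i_j > 0} \Bigl(\int_M \phi^s |\nabla^{i_j} A|^{2k/i_j}\Bigr)^{i_j/(2k)}.
\end{equation*}

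Next, for each $0 < j \leq k$ I would establish the weighted interpolation
\begin{equation*}
\Bigl(\int_M \phi^s |\nabla^j A|^{2k/j}\Bigr)^{j/(2k)} \leq c \|A\|_{\infty,[\phi>0]}^{1-j/k} \Bigl(\Bigl(\int_M \phi^s |\nabla^k A|^2\Bigr)^{1/2} + \|A\|_{2,[\phi>0]}\Bigr)^{j/k}
\end{equation*}
by iterating Proposition \ref{prop: K-S Corollary 5.2}. Each application trades one derivative on $A$ against one falling on $\phi$, and repeated application climbs from $\nabla^j A$ up to $\nabla^k A$; the passage from the $L^{2k/j}$ scale to the $L^2$ scale is accomplished through the elementary $\|u\|_p \leq \|u\|_2^{2/p}\|u\|_\infty^{1-2/p}$, which produces the $\|A\|_\infty^{1-j/k}$ prefactor via the standard scaling of derivatives. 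Multiplying the resulting estimates over $j = i_j$ and summing exponents yields $\sum(1-i_j/k) = r - 2$ powers of $\|A\|_\infty$ and $\sum i_j/k = 2$ powers of the $L^2$-with-additive-error quantity, so a final application of Young's inequality produces the stated additive bound $\int_M \phi^s |\nabla^k A|^2 + \|A\|_{2,[\phi>0]}^2$ on the right.

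The main technical obstacle is the weighted interpolation above: derivatives that fall on $\phi$ during the integrations by parts underlying Proposition \ref{prop: K-S Corollary 5.2} produce error terms that must be absorbed carefully. This is precisely why Proposition \ref{prop: K-S Corollary 5.2} is formulated with the $\epsilon / c\epsilon^{-1}$ splitting -- it allows the top-order term to be absorbed on the left while the residue accumulates into the $\|A\|_{2,[\phi>0]}^2$ term on the right. Since Proposition \ref{prop: K-S Corollary 5.2} depends only on $\Lambda_1$ and not on higher derivatives of $\phi$, the constant $c$ in the final estimate likewise depends only on $k, n, r, s, \Lambda_1$, matching the stated hypothesis.
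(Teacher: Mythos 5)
The paper never proves this proposition: it is quoted verbatim as Corollary 5.5 of Kuwert--Sch\"atzle \cite{KuwertSchaetzleGradientFlowWillmoreFunctional}, so there is no in-paper argument to compare against. Your overall architecture --- pull out $\|A\|_\infty$ for the indices with $i_j=0$, apply H\"older with exponents $2k/i_j$ (which sum to exactly $1$ over the positive indices, so there is in fact no ``slack'' to absorb), insert a weighted Gagliardo--Nirenberg interpolation for each factor, then multiply and use Young --- is exactly how Kuwert and Sch\"atzle derive their Corollary 5.5 from their Proposition 5.4, and your exponent bookkeeping ($\sum_j(1-i_j/k)=r-2$ and $\sum_j i_j/k=2$) is correct.

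The gap is in your justification of the key interpolation $\bigl(\int_M\phi^s|\nabla^jA|^{2k/j}\bigr)^{j/(2k)}\le c\,\|A\|_\infty^{1-j/k}\bigl(\|\phi^{s/2}\nabla^kA\|_2+\|A\|_{2,[\phi>0]}\bigr)^{j/k}$. Proposition \ref{prop: K-S Corollary 5.2} is an \emph{additive} inequality at a \emph{fixed} Lebesgue exponent $p$: iterating it controls $\|\nabla^jA\|_{L^p}$ by $\|\nabla^{j+1}A\|_{L^p}$ and $\|A\|_{L^p}$ at the same $p=2k/i_j$, and so can never convert the $L^{2k/j}$ scale of the intermediate derivatives into the $L^2$ scale of $\nabla^kA$. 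The elementary interpolation $\|u\|_p\le\|u\|_2^{2/p}\|u\|_\infty^{1-2/p}$ does not repair this, because applied to $u=\nabla^jA$ it produces a factor $\|\nabla^jA\|_\infty^{1-j/k}$ rather than $\|A\|_\infty^{1-j/k}$, and there is no a priori control of $\|\nabla^jA\|_\infty$ by $\|A\|_\infty$ and $\|\nabla^kA\|_{L^2}$. What is actually required is the multiplicative estimate obtained by integrating by parts in $\int_M\phi^s|\nabla^jA|^{2p}$ --- schematically $\int_M\phi^s|\nabla^jA|^{2p}\le C\int_M\phi^s|\nabla^jA|^{2p-2}\,|\nabla^{j+1}A|\,|\nabla^{j-1}A|+\dots$ --- in which the Lebesgue exponent changes as the order of differentiation changes; this is the content of Kuwert--Sch\"atzle's Propositions 5.3--5.4 and is precisely the step your sketch replaces with an inequality that delivers the wrong $L^\infty$ factor. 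The remainder of your argument is fine once that interpolation is established (or simply cited, as the paper does).
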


We estimate the last term of \eqref{eq: time derivative eqn for local L2 norm of nabla^k Rm}.
\begin{lemma} \label{lem: nabla(l) to nabla(l+1) interpolation inequality}
 Suppose $M,\phi$ satisfy the above hypotheses. If $l\geq 1,q\geq 0$, then for every $\epsilon>0$,
 \begin{equation*}
  \int_M\phi^{2l+q}|\nabla^l\Rm|^2\leq\epsilon\int_M\phi^{2l+q+2}|\nabla^{l+1}\Rm|^2+\frac{C}{\epsilon^l}\int_{[\phi>0]}\phi^q|\Rm|^2.
 \end{equation*}
 where $C=C(n,l,\Lambda_1,q)$.
\end{lemma}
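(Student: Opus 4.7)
The plan is to prove this by induction on $l$, using Proposition \ref{prop: K-S Corollary 5.2} (Kuwert--Sch\"atzle's local interpolation) as the main tool at each step.

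For the base case $l=1$, I would apply Proposition \ref{prop: K-S Corollary 5.2} directly with the tensor $A=\Rm$, exponent $p=2$, and weight exponent $s=q+2\geq p$. After squaring (with the elementary bound $(a+b)^2\le 2a^2+2b^2$) and then rescaling the free parameter to replace $\epsilon^2$ by $\epsilon$, one obtains exactly the desired inequality
\[
\int_M\phi^{q+2}|\nabla\Rm|^2\leq \epsilon\int_M\phi^{q+4}|\nabla^2\Rm|^2+\frac{C}{\epsilon}\int_{[\phi>0]}\phi^q|\Rm|^2,
\]
so the $l=1$ case holds.

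For the inductive step, suppose the inequality holds at level $l-1$ (for every $q\geq 0$ and every positive parameter). I would apply Proposition \ref{prop: K-S Corollary 5.2} with $A=\nabla^{l-1}\Rm$, $p=2$, and $s=2l+q$ to get
\[
\int_M\phi^{2l+q}|\nabla^l\Rm|^2\leq 2\epsilon'^{2}\int_M\phi^{2l+q+2}|\nabla^{l+1}\Rm|^2+\frac{2c^2}{\epsilon'^{2}}\int_{[\phi>0]}\phi^{2(l-1)+q}|\nabla^{l-1}\Rm|^2.
\]
The last term is of the form handled by the inductive hypothesis (with the same $q$, but at level $l-1$), so I can bound it in turn by $\tilde\epsilon\int\phi^{2l+q}|\nabla^l\Rm|^2+\tilde\epsilon^{-(l-1)}C\int_{[\phi>0]}\phi^q|\Rm|^2$.

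Substituting back and choosing $\tilde\epsilon$ small enough (e.g.\ $\tilde\epsilon=\epsilon'^{2}/(4c^2)$) so that the $|\nabla^l\Rm|^2$ term on the right can be absorbed into the left-hand side, I arrive at an inequality of the form
\[
\int_M\phi^{2l+q}|\nabla^l\Rm|^2\leq 4\epsilon'^{2}\int_M\phi^{2l+q+2}|\nabla^{l+1}\Rm|^2+\frac{C'}{\epsilon'^{2l}}\int_{[\phi>0]}\phi^q|\Rm|^2.
\]
Relabeling $\epsilon=4\epsilon'^{2}$ turns $\epsilon'^{-2l}$ into a constant times $\epsilon^{-l}$, giving the statement in the desired form. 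The only step requiring real care is the bookkeeping of the powers of the small parameter: each application of the degree-$1$ interpolation inequality contributes one power of $\epsilon^{-1}$ on the low-order term, and the inductive absorption compounds these exactly into the claimed $\epsilon^{-l}$; the rest of the argument is routine integration by parts hidden inside Proposition \ref{prop: K-S Corollary 5.2}.
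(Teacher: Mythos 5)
Your proposal is correct and follows essentially the same route as the paper: induction on $l$ with the base case given directly by Proposition \ref{prop: K-S Corollary 5.2}, and an inductive step that applies the degree-one interpolation once (to $\nabla^{l-1}\Rm$), invokes the inductive hypothesis on the resulting lower-order term, and absorbs the reappearing top-order term into the left-hand side before rescaling the small parameter. The only difference is cosmetic indexing (you pass from level $l-1$ to $l$, the paper from $l$ to $l+1$), and your bookkeeping of the powers of $\epsilon$ matches the claimed $\epsilon^{-l}$.
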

\begin{proof}
 If $l=1$, the inequality follows immediately from Proposition \ref{prop: K-S Corollary 5.2}. Assume that the the inequality is true for all integers at most $l$. Then
 \begin{align*}
  \int\phi^{2l+2+q}|\nabla^{l+1}\Rm|^2
  & \leq\frac{\epsilon}{2}\int\phi^{2l+4+q}|\nabla^{l+2}\Rm|^2+\frac{C}{\epsilon}\int\phi^{2l+q}|\nabla^l\Rm|^2 \\
  & \leq\frac{\epsilon}{2}\int\phi^{2l+4+q}|\nabla^{l+2}\Rm|^2+\frac{C}{\epsilon}\frac{\epsilon}{2C}\int\phi^{2l+q+2}|\nabla^{l+1}\Rm|^2+\frac{C}{\epsilon}\frac{C}{\epsilon^l}\int\phi^q|\Rm|^2 \\
  & =\frac{\epsilon}{2}\int\phi^{2l+4+q}|\nabla^{l+2}\Rm|^2+\frac{1}{2}\int\phi^{2l+q+2}|\nabla^{l+1}\Rm|^2+\frac{C}{\epsilon^{l+1}}\int\phi^q|\Rm|^2.
 \end{align*}
 Collecting terms, we see that the statement is also true for $l+1$.
\end{proof}

\begin{lemma} \label{lem: nabla^l to nabla^q interpolation inequality}
 Suppose $M,\phi$ satisfy the above hypotheses. If $q\geq 0$ and $0\leq l\leq q$,
 \begin{equation*}
  \int_M\phi^{2l+r}|\nabla^l\Rm|^2\leq\epsilon^{q-l}\int_M\phi^{2q+r}|\nabla^q\Rm|^2+C\epsilon^{-l}\int_{[\phi>0]}\phi^r|\Rm|^2.
 \end{equation*}
 where $C=C(n,l,\Lambda_1,r)$.
\end{lemma}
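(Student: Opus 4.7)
The plan is to induct on $q$ with $l$ fixed, using the single-step interpolation Lemma \ref{lem: nabla(l) to nabla(l+1) interpolation inequality} repeatedly. The base case $q=l$ is immediate: the first term on the right-hand side equals the left-hand side (with $\epsilon^0=1$), and the remainder term is nonnegative.

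For the inductive step, suppose the stated inequality holds for some $q\ge l$, i.e.
\begin{equation*}
\int_M\phi^{2l+r}|\nabla^l\Rm|^2\le\epsilon^{q-l}\int_M\phi^{2q+r}|\nabla^q\Rm|^2+C\epsilon^{-l}\int_{[\phi>0]}\phi^r|\Rm|^2.
\end{equation*}
I apply Lemma \ref{lem: nabla(l) to nabla(l+1) interpolation inequality} to the first term on the right (with parameter $q$ there playing the role of $r$ here and $l$ there being $q$). Choosing the parameter $\delta=\epsilon$ in that lemma gives
\begin{equation*}
\int_M\phi^{2q+r}|\nabla^q\Rm|^2\le\epsilon\int_M\phi^{2q+2+r}|\nabla^{q+1}\Rm|^2+C'\epsilon^{-q}\int_{[\phi>0]}\phi^r|\Rm|^2.
\end{equation*}
Substituting and multiplying through by $\epsilon^{q-l}$, the leading term becomes $\epsilon^{q+1-l}\int_M\phi^{2(q+1)+r}|\nabla^{q+1}\Rm|^2$, while the lower-order term contributes $C'\epsilon^{(q-l)-q}=C'\epsilon^{-l}$ against $\int_{[\phi>0]}\phi^r|\Rm|^2$, which combines with the original $C\epsilon^{-l}$ remainder. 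This closes the induction (with the constant updated to $C+C'$). The edge case $l=0$ needs a quick sanity check: when $q=l=0$ the base case still holds; the step $q=0\to q=1$ with $l=0$ is the trivial bound $\int\phi^r|\Rm|^2\le\int_{[\phi>0]}\phi^r|\Rm|^2$ plus a nonnegative higher-order term, and for $q\ge 1$ the previous lemma applies directly.

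The only real bookkeeping to watch is the exponents of $\epsilon$: one must verify that the choice $\delta=\epsilon$ at every iteration makes the $\epsilon^{-q}$ remainder factor combine exactly with the $\epsilon^{q-l}$ prefactor to produce $\epsilon^{-l}$, independently of $q$. That is the one bit where the wrong choice of $\delta$ would accumulate spurious negative powers of $\epsilon$, and it is why iterating with $\delta=\epsilon$ rather than choosing $\delta$ adaptively is the right move. The constant $C$ produced by this argument depends on $n$, $l$, $q$, $\Lambda_1$, and $r$ (absorbing a finite number of applications of Lemma \ref{lem: nabla(l) to nabla(l+1) interpolation inequality}), which is consistent with the stated dependencies.
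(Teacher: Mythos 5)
Your proof is correct and follows essentially the same route as the paper: both arguments iterate the one-step interpolation Lemma \ref{lem: nabla(l) to nabla(l+1) interpolation inequality} a total of $q-l$ times with the parameter there set to $\epsilon$, so that the $\epsilon^{-j}$ remainder at each stage combines with the accumulated prefactor to give exactly $\epsilon^{-l}$; the only difference is that the paper inducts on the gap $m=q-l$ (raising the lower index toward a fixed $q$) while you fix $l$ and raise the upper index, which is the same induction re-parametrized. Your remark that the resulting constant also depends on $q$ is accurate, but the paper's own proof has the identical (harmless) dependence, so this is not a discrepancy you introduced.
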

\begin{proof}
 Let $m=q-l$. The desired inequality is equivalent to 
 \begin{equation} \label{eq: ineq inside pf of nabla^l to nabla^q interpolation inequality lemma}
  \int_M\phi^{2q-2m+r}|\nabla^{q-m}\Rm|^2\leq\epsilon^m\int_M\phi^{2q+r}|\nabla^q\Rm|^2+C\epsilon^{m-q}\int_{[\phi>0]}\phi^r|\Rm|^2.
 \end{equation}
 We prove this inequality by induction on $m$. If $m=0$ the inequality is true:
 \begin{equation*}
  \int_M\phi^{2q+r}|\nabla^q\Rm|^2\leq\int_M\phi^{2q+r}|\nabla^q\Rm|^2+C\epsilon^{-q}\int_{[\phi>0]}\phi^r|\Rm|^2.
 \end{equation*}
 Assume the inequality \eqref{eq: ineq inside pf of nabla^l to nabla^q interpolation inequality lemma} is true for every integer less than $m$. Then
 \begin{align*}
  \int_M\phi^{2q-2m+r}|\nabla^{q-m}\Rm|^2
  & \leq\epsilon\int_M\phi^{2q-2m+r+2}|\nabla^{q-m+1}\Rm|^2+C\epsilon^{m-q}\int_{[\phi>0]}\phi^r|\Rm|^2 \\
  & \leq\epsilon\epsilon^{m-1}\int_M\phi^{2q+r}|\nabla^q\Rm|^2+\epsilon C\epsilon^{m-q-1}\int_{[\phi>0]}\phi^r|\Rm|^2+C\epsilon^{m-q}\int_{[\phi>0]}\phi^r|\Rm|^2 \\
  & =\epsilon^m\int_M\phi^{2q+r}|\nabla^q\Rm|^2+C\epsilon^{m-q}\int_{[\phi>0]}\phi^r|\Rm|^2.
 \end{align*}
 We applied Lemma \ref{lem: nabla(l) to nabla(l+1) interpolation inequality} in the first line and the inductive hypothesis in the second line.
\end{proof}

\begin{lemma} \label{lem: k dependent estimate for term containing P(phi)}
 Suppose $M,\phi$ satisfy the above hypotheses. Let $0\leq i\leq\frac{n}{2}-1$ and $p\geq n+2k$. Then for every $\delta>0$,
 \begin{equation*}
  \int_M P_p^{\frac{n}{2}-i}(\phi)*\nabla^{i+k}\Rm*\nabla^{\frac{n}{2}+k}\Rm
  \leq C\delta\int_M\phi^p|\nabla^{\frac{n}{2}+k}\Rm|^2+C\delta^\frac{-n-2i-4k}{n-2i}\int_{[\phi>0]}\phi^{p-n-2k}|\Rm|^2,
 \end{equation*}
 where $C=C(n,k,p,\Lambda,i)$.
\end{lemma}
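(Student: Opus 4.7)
My plan is to first exploit the pointwise structure of $P_p^{n/2-i}(\phi)$. By definition each summand is a product of exactly $p$ factors $\nabla^{\alpha_j}\phi$ whose derivative orders satisfy $\alpha_1+\cdots+\alpha_p=\tfrac{n}{2}-i\leq\tfrac{n}{2}-1$. Consequently at least $p-(\tfrac{n}{2}-i)$ of these factors are undifferentiated copies of $\phi$, while each differentiated factor satisfies $|\nabla^{\alpha_j}\phi|\leq\Lambda$ since $\alpha_j\leq \tfrac{n}{2}$. Combining these observations yields the pointwise estimate
\begin{equation*}
|P_p^{n/2-i}(\phi)|\leq C\phi^{p-n/2+i},
\end{equation*}
with $C=C(n,i,p,\Lambda)$; note that $p-\tfrac{n}{2}+i\geq 0$ because $p\geq n+2k\geq n$.

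Next I would split this weight evenly and apply Young's inequality with a free parameter $\delta>0$. Writing $\phi^{p-n/2+i}=\phi^{p/2}\cdot\phi^{p/2-n/2+i}$ and pairing the first factor with $\nabla^{n/2+k}\Rm$, I obtain
\begin{equation*}
\int_M P_p^{n/2-i}(\phi)*\nabla^{i+k}\Rm*\nabla^{n/2+k}\Rm\leq C\delta\int_M\phi^p|\nabla^{n/2+k}\Rm|^2+\frac{C}{\delta}\int_M\phi^{p-n+2i}|\nabla^{i+k}\Rm|^2.
\end{equation*}
The first term already matches the form appearing on the right-hand side of the claim.

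The remaining step is to dominate the intermediate-derivative integral by the top-derivative one plus a zeroth-order term. For this I would invoke Lemma \ref{lem: nabla^l to nabla^q interpolation inequality} with $l=i+k$, $q=\tfrac{n}{2}+k$, and $r=p-n-2k\geq 0$, so that the two weights $2l+r=p-n+2i$ and $2q+r=p$ match exactly. This produces, for any auxiliary $\tilde\epsilon>0$,
\begin{equation*}
\int_M\phi^{p-n+2i}|\nabla^{i+k}\Rm|^2\leq\tilde\epsilon^{n/2-i}\int_M\phi^p|\nabla^{n/2+k}\Rm|^2+C\tilde\epsilon^{-(i+k)}\int_{[\phi>0]}\phi^{p-n-2k}|\Rm|^2.
\end{equation*}
Substituting back and choosing $\tilde\epsilon=\delta^{4/(n-2i)}$ balances $\tilde\epsilon^{n/2-i}/\delta$ against the leading $C\delta$; the coefficient of the $|\Rm|^2$ integral then simplifies to $C\delta^{-1}\tilde\epsilon^{-(i+k)}=C\delta^{(-n-2i-4k)/(n-2i)}$, matching the stated exponent.

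There is no real obstacle: the argument is a careful bookkeeping exercise matching derivative counts against powers of the cutoff. The only step requiring attention is the tuning of $\tilde\epsilon$ as a power of $\delta$, since it is precisely this optimization that yields the somewhat delicate exponent $(-n-2i-4k)/(n-2i)$ appearing in the statement.
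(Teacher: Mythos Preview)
Your proposal is correct and follows essentially the same approach as the paper: bound $P_p^{n/2-i}(\phi)$ pointwise by $C\phi^{p-n/2+i}$, apply a weighted Young/Cauchy--Schwarz inequality, invoke Lemma~\ref{lem: nabla^l to nabla^q interpolation inequality} with $l=i+k$, $q=\tfrac{n}{2}+k$, $r=p-n-2k$, and then tune the free parameter. The only cosmetic difference is that the paper carries a single parameter $\epsilon$ with an undetermined exponent $\beta$ (solving $\beta=\tfrac{n}{2}-i-\beta$ to balance the two top-order contributions) and sets $\delta=\epsilon^{(n-2i)/4}$ at the end, whereas you introduce $\delta$ and $\tilde\epsilon$ separately and relate them by $\tilde\epsilon=\delta^{4/(n-2i)}$; the resulting exponents agree.
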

\begin{proof} We apply the Cauchy-Schwarz inequality:
 \begin{align*}
  \int_M P_p^{\frac{n}{2}-i}(\phi)*\nabla^{i+k}\Rm*\nabla^{\frac{n}{2}+k}\Rm
  & \leq C(\Lambda)\int_M|\phi^{p-(\frac{n}{2}-i)}*\nabla^{i+k}\Rm*\nabla^{\frac{n}{2}+k}\Rm| \\
  & \leq C\epsilon^\beta\int_M\phi^p|\nabla^{\frac{n}{2}+k}\Rm|^2+C\epsilon^{-\beta}\int_{[\phi>0]}\phi^{p-n+2i}|\nabla^{i+k}\Rm|^2.
 \end{align*}
 The second term can be estimated using Lemma \ref{lem: nabla^l to nabla^q interpolation inequality}:
 \begin{align*}
  \int_{[\phi>0]}\phi^{p-n+2i}|\nabla^{i+k}\Rm|^2
  & =\int_{[\phi>0]}\phi^{2(i+k)+(p-n-2k)}|\nabla^{i+k}\Rm|^2 \\
  & \leq\epsilon^{\frac{n}{2}-i}\int_M|\nabla^{\frac{n}{2}+k}\Rm|^2+C\epsilon^{-i-k}\int_{[\phi>0]}\phi^{p-n-2k}|\Rm|^2.
 \end{align*}
 If $\beta=\frac{n}{2}-i-\beta$, then $\beta=\frac{n-2i}{4}$. If we set $\delta=\epsilon^\frac{n-2i}{4}$, then $\epsilon=\delta^\frac{4}{n-2i}$ and
 \begin{equation*}
  \epsilon^{-\beta-i-k}=\delta^{\frac{4}{n-2i}\left(\frac{2i-n}{4}-i-k\right)}=\delta^\frac{-n-2i-4k}{n-2i}.
 \end{equation*}
 Therefore
 \begin{align*}
  \int_M P_p^{\frac{n}{2}-i}(\phi)*\nabla^{i+k}\Rm*\nabla^{\frac{n}{2}+k}\Rm
  & \begin{aligned}[t]
     \leq C\epsilon^\beta\int_M\phi^p|\nabla^{\frac{n}{2}+k}\Rm|^2+C\epsilon^{-\beta+\frac{n}{2}-i}\int_M|\nabla^{\frac{n}{2}+k}\Rm|^2 \\
     {}+C\epsilon^{-\beta-i-k}\int_{[\phi>0]}\phi^{p-n-2k}|\Rm|^2
    \end{aligned} \\
  & \leq C\delta\int_M\phi^p|\nabla^{\frac{n}{2}+k}\Rm|^2+C\delta^\frac{-n-2i-4k}{n-2i}\int_{[\phi>0]}\phi^{p-n-2k}|\Rm|^2.
 \end{align*}
\end{proof}

We estimate the penultimate term of \eqref{eq: time derivative eqn for local L2 norm of nabla^k Rm}.
\begin{lemma} \label{lem: k dependent estimate for term containing P(Rm)}
 Suppose $M,\phi$ satisfy the above hypotheses. Let $K=\max\{1,\|\Rm\|_\infty\}$. If $p\geq n+2k$ and $k\leq l\leq\frac{n}{2}+k-l$, then for every $\delta$ satisfying $0<\delta\leq 1$,
 \begin{equation*}
  \int_M\phi^p P_{\frac{n}{2}+k-l+2}^{2l}(\Rm)\leq C\delta\int_M\phi^{p+n+2k-2l}|\nabla^{\frac{n}{2}+k}\Rm|^2+CK^{\frac{n}{2}+k}\delta^\frac{2l}{2l-n-2k}\|\Rm\|_{2,[\phi>0]}^2,
 \end{equation*}
 where $C=C(n,k,p,\Lambda_1,l)$.
\end{lemma}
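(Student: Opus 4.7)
The plan is to combine Proposition \ref{prop: K-S Corollary 5.5} with Lemma \ref{lem: nabla^l to nabla^q interpolation inequality} and then calibrate the interpolation parameter to produce the requested $\delta$-form. The idea: each summand in $P_{\frac{n}{2}+k-l+2}^{2l}(\Rm)$ has $\frac{n}{2}+k-l+2$ curvature factors sharing a total of $2l$ derivatives, so the Kuwert--Sch\"{a}tzle multiplicative estimate absorbs $\frac{n}{2}+k-l$ of the extra factors into $K^{\frac{n}{2}+k-l}$ and reduces matters to controlling $\int \phi^p|\nabla^l\Rm|^2$, which is then interpolated against $\nabla^{\frac{n}{2}+k}\Rm$.

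Write a generic summand as $\nabla^{i_1}\Rm*\cdots*\nabla^{i_r}\Rm$ with $r=\frac{n}{2}+k-l+2$ and $i_1+\cdots+i_r=2l$. After integrating by parts to redistribute derivatives off any factor of order exceeding $l$ (the resulting error terms carry derivatives of $\phi$ controlled by $\Lambda_1$), I may assume $i_j\le l$ for every $j$. Apply Proposition \ref{prop: K-S Corollary 5.5} with $A=\Rm$, with its parameter $k$ replaced by $l$, and with $s=p$; this is legal since $p\ge n+2k\ge 2l$. Combined with $\|\Rm\|_\infty\le K$ and $r-2=\frac{n}{2}+k-l$, this gives
\begin{equation*}
\int_M \phi^p\, P_{\frac{n}{2}+k-l+2}^{2l}(\Rm) \leq cK^{\frac{n}{2}+k-l}\left(\int_M \phi^p\, |\nabla^l\Rm|^2 + \|\Rm\|_{2,[\phi>0]}^2\right).
\end{equation*}

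Next I apply Lemma \ref{lem: nabla^l to nabla^q interpolation inequality} with its $q$ set to $\frac{n}{2}+k$ and its $r$ set to $p-2l$; the hypotheses hold because $l\le \frac{n}{2}+k-1<\frac{n}{2}+k$ and $p-2l\ge n+2k-2l\ge 0$. This yields
\begin{equation*}
\int_M \phi^p\,|\nabla^l\Rm|^2 \leq \epsilon^{\frac{n}{2}+k-l}\int_M \phi^{p+n+2k-2l}\,|\nabla^{\frac{n}{2}+k}\Rm|^2 + C\epsilon^{-l}\|\Rm\|_{2,[\phi>0]}^2,
\end{equation*}
after absorbing $\phi^{p-2l}\le \Lambda^{p-2l}$ into $C$. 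Now set $\epsilon=K^{-1}\delta^{1/(\frac{n}{2}+k-l)}$, which calibrates so that $K^{\frac{n}{2}+k-l}\epsilon^{\frac{n}{2}+k-l}=\delta$, while the error coefficient becomes $K^{\frac{n}{2}+k-l}\epsilon^{-l}=K^{\frac{n}{2}+k}\delta^{-l/(\frac{n}{2}+k-l)}=K^{\frac{n}{2}+k}\delta^{2l/(2l-n-2k)}$. Since $0<\delta\le 1$ and $2l-n-2k<0$, we have $\delta^{2l/(2l-n-2k)}\ge 1$, which also absorbs the residual $cK^{\frac{n}{2}+k-l}\|\Rm\|_{2,[\phi>0]}^2$ using $K^{\frac{n}{2}+k-l}\le K^{\frac{n}{2}+k}$. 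This delivers the lemma.

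The main bookkeeping obstacle is the exponent arithmetic in the last step, matching the specific fraction $\frac{2l}{2l-n-2k}$ via the choice of $\epsilon$ as a power of $\delta/K$; the integration-by-parts reduction to $i_j\le l$, while routine, must be made explicit in order to satisfy the hypothesis of Proposition \ref{prop: K-S Corollary 5.5}.
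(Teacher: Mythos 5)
Your proof follows the paper's argument essentially verbatim: apply Proposition \ref{prop: K-S Corollary 5.5} with $A=\Rm$ to pull out $K^{\frac{n}{2}+k-l}$, interpolate $\int\phi^p|\nabla^l\Rm|^2$ against $\nabla^{\frac{n}{2}+k}\Rm$ via Lemma \ref{lem: nabla^l to nabla^q interpolation inequality} with $q=\frac{n}{2}+k$, $r=p-2l$, and choose $\epsilon=K^{-1}\delta^{2/(n+2k-2l)}$ (identical to your $K^{-1}\delta^{1/(\frac{n}{2}+k-l)}$), absorbing the leftover terms using $K\geq 1$ and $\delta^{2l/(2l-n-2k)}\geq 1$. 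Your extra remark about redistributing derivatives so that each $i_j\leq l$ before invoking Proposition \ref{prop: K-S Corollary 5.5} is a point the paper glosses over, but it does not change the route.
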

\begin{proof} Since $p\geq n+2k\geq n+2k-2=2(\frac{n}{2}+k-1)$, Proposition \ref{prop: K-S Corollary 5.5} implies
 \begin{equation*}
  \int_M\phi^p P_{\frac{n}{2}+k-l+2}^{2l}(\Rm)\leq C\|\Rm\|_\infty^{\frac{n}{2}+k-l}\left(\int_M\phi^p|\phi^l\Rm|^2+\|\Rm\|_{2,[\phi>0]}^2\right)
 \end{equation*}
 Let $\epsilon=K^{-1}\delta^\frac{2}{n+2k-2l}$. We have $p-2l\geq n+2k-(n+2k-1)=1$. Via Lemma \ref{lem: nabla^l to nabla^q interpolation inequality},
 \begin{align*}
  C\|\Rm\|_\infty^{\frac{n}{2}+k-l}\int_M\phi^p|\nabla^l\Rm|^2
  & \begin{aligned}[t]
     \leq CK^{\frac{n}{2}+k-l}\epsilon^{\frac{n}{2}+k-l}\int_M\phi^{n+2k+p-2l}|\nabla^{\frac{n}{2}+k}\Rm|^2 \\
     {}+CK^{\frac{n}{2}+k-l}\epsilon^{-l}\int_{[\phi>0]}\phi^{p-2l}|\Rm|^2
    \end{aligned} \\
  & =C\delta\int_M\phi^{n+2k+p-2l}|\nabla^{\frac{n}{2}+k}\Rm|^2+CK^{\frac{n}{2}+k}\delta^\frac{2l}{2l-n-2k}\int_{[\phi>0]}\phi^{p-2l}|\Rm|^2.
 \end{align*}
 Since $k\leq l\leq\frac{n}{2}+k-l$ and $0<\delta\leq 1$, we get $\delta^\frac{2l}{2l-n-2k}\geq\delta^{-\frac{2k}{n}}\geq 1$ and $K^{\frac{n}{2}+k-l}\leq K^\frac{n}{2}$. Therefore
 \begin{align*}
  \int_M\phi^p P_{\frac{n}{2}+k-l+2}^{2l}(\Rm)
  & \begin{aligned}[t]
     \leq C\delta\int_M\phi^{n+2k+p-2l}|\nabla^{\frac{n}{2}+k}\Rm|^2+CK^{\frac{n}{2}+k}\delta^\frac{2l}{2l-n-2k}\int_{[\phi>0]}\phi^{p-2l}|\Rm|^2 \\
     {}+K^{\frac{n}{2}+k-l}\|\Rm\|_{2,[\phi>0]}^2
    \end{aligned} \\
  & \leq C\delta\int_M\phi^{p+n+2k-2l}|\nabla^{\frac{n}{2}+k}\Rm|^2+CK^{\frac{n}{2}+k}\delta^\frac{2l}{2l-n-2k}\|\Rm\|_{2,[\phi>0]}^2.
 \end{align*}
\end{proof}

\begin{proposition} \label{prop: interpolation type estimate for patial_t nabla^k Rm}
 Suppose $M,\phi$ satisfy the above hypotheses. Let $K=\max\{1,\|\Rm\|_\infty\}$. If $p\geq n+2k$, then for every $\delta$ satisfying $0<\delta\leq 1$,
  \begin{equation*}
   \partial_t\|\phi^\frac{p}{2}\nabla^k\Rm\|_2^2
   \leq -\tfrac{1}{2(n-2)}\|\phi^\frac{p}{2}\nabla^{\frac{n}{2}+k}\Rm\|_2^2+CK^{\frac{n}{2}+k}\|\Rm\|_{2,[\phi>0]}^2
  \end{equation*}
 where $C=C(n,k,p,\Lambda)$.
\end{proposition}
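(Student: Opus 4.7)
The plan is to substitute the evolution identity from Proposition \ref{prop: time derivative for local L2 norm of nabla^k Rm} and control the two nonlinear error terms using Lemmas \ref{lem: k dependent estimate for term containing P(phi)} and \ref{lem: k dependent estimate for term containing P(Rm)}; a small-parameter absorption argument then converts the coefficient $-\tfrac{1}{n-2}$ of the dissipative term into the claimed $-\tfrac{1}{2(n-2)}$.

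First I would apply Lemma \ref{lem: k dependent estimate for term containing P(Rm)} termwise to the curvature polynomial sum $\int_M\phi^p\sum_{l=k}^{n/2+k-1}P_{n/2+k-l+2}^{2l}(\Rm)$. For each $l$ in the range one has $n+2k-2l\geq 2>0$, so under the standard cutoff normalization $\phi\leq 1$ the factor $\phi^{p+n+2k-2l}$ produced by the lemma is bounded by $\phi^p$, and the term contributes at most
\begin{equation*}
C\delta\,\|\phi^{p/2}\nabla^{n/2+k}\Rm\|_2^2+CK^{n/2+k}\delta^{-\alpha_l}\|\Rm\|_{2,[\phi>0]}^2
\end{equation*}
for some $\alpha_l>0$ depending on $n,k,l$. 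Applying Lemma \ref{lem: k dependent estimate for term containing P(phi)} to each summand of the cutoff-derivative term $\int_{[\phi>0]}\sum_{i=0}^{n/2-1}P_p^{n/2-i}(\phi)*\nabla^{k+i}\Rm*\nabla^{k+n/2}\Rm$ proceeds analogously: the hypothesis $p\geq n+2k$ makes the residual factor $\phi^{p-n-2k}\leq 1$, producing a bound
\begin{equation*}
C\delta\,\|\phi^{p/2}\nabla^{n/2+k}\Rm\|_2^2+C\delta^{-\beta_i}\|\Rm\|_{2,[\phi>0]}^2,
\end{equation*}
into which the harmless factor $K^{n/2+k}\geq 1$ may be inserted.

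Summing over the finitely many indices $i,l$, the evolution identity reduces to an inequality of the shape
\begin{equation*}
\partial_t\|\phi^{p/2}\nabla^k\Rm\|_2^2\leq\Bigl(-\tfrac{1}{n-2}+C_0\delta\Bigr)\|\phi^{p/2}\nabla^{n/2+k}\Rm\|_2^2+CK^{n/2+k}\delta^{-\gamma}\|\Rm\|_{2,[\phi>0]}^2
\end{equation*}
for some $\gamma>0$ and $C_0=C_0(n,k,p,\Lambda)$. Fixing $\delta$ small enough (depending only on $n,k,p,\Lambda$) so that $C_0\delta\leq\tfrac{1}{2(n-2)}$, and absorbing the resulting $\delta^{-\gamma}$ into the final constant $C$, yields the stated estimate.

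The only real obstacle is bookkeeping: keeping track of the powers of $\phi$ so that everything on the right collapses to $\phi^p$. This uses $l\leq n/2+k-1$ for the first sum and $p\geq n+2k$ for the second, precisely the hypotheses available. Beyond this, the absorption of the positive $\|\phi^{p/2}\nabla^{n/2+k}\Rm\|_2^2$ contributions into the negative dissipative term is routine.
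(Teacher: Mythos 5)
Your proposal is correct and follows essentially the same route as the paper: substitute the identity of Proposition \ref{prop: time derivative for local L2 norm of nabla^k Rm}, estimate the two error sums termwise via Lemmas \ref{lem: k dependent estimate for term containing P(Rm)} and \ref{lem: k dependent estimate for term containing P(phi)} (using $\phi\leq 1$ and $p\geq n+2k$ to collapse the powers of $\phi$), and then fix $\delta$ small to absorb the positive $\|\phi^{p/2}\nabla^{n/2+k}\Rm\|_2^2$ contributions, halving the dissipative coefficient. The paper merely makes the final constant explicit by identifying the worst power of $\delta$ as $\delta^{1-n-2k}$, which your absorption of $\delta^{-\gamma}$ into $C$ handles equivalently.
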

\begin{proof}
 Applying the estimates from Lemmas \ref{lem: k dependent estimate for term containing P(Rm)} and \ref{lem: k dependent estimate for term containing P(phi)} to the equation \eqref{eq: time derivative eqn for local L2 norm of nabla^k Rm} in Proposition \ref{prop: time derivative for local L2 norm of nabla^k Rm}, we obtain
 \begin{multline*}
  \partial_t\|\phi^\frac{p}{2}\nabla^k\Rm\|_2^2
  \leq -\tfrac{1}{n-2}\|\phi^\frac{p}{2}\nabla^{\frac{n}{2}+k}\Rm\|_2^2 \\
  +\sum_{l=k}^{\frac{n}{2}+k-1}\left[C_1\delta\|\phi^{\frac{p}{2}+\frac{n}{2}+k-l}\nabla^{\frac{n}{2}+k}\Rm|_2^2+C_1K^{\frac{n}{2}+k}\delta^\frac{2l}{2l-n-2k}\|\Rm\|_{2,[\phi>0]}^2\right] \\
  {}+\sum_{i=0}^{\frac{n}{2}-1}\left[C_2\delta\|\phi^\frac{p}{2}\nabla^{\frac{n}{2}+k}\Rm\|_2^2+C_2\delta^\frac{-n-2i-4k}{n-2i}\|\phi^{\frac{p}{2}-\frac{n}{2}-k}\Rm\|_{2,[\phi>0]}^2\right],
 \end{multline*}
 where $C_1=C_1(n,k,p,\Lambda,l)$ and $C_2=C_2(n,k,p,\Lambda_1,i)$. From the inequalities
 \begin{equation*}
  1-n-2k\leq 1-\frac{2n+4k}{n-2i}\leq -\frac{n+4k}{n},\quad \frac{2-n-2k}{2}\leq 1+\frac{n+2k}{2l-n-2k}\leq -\frac{2k}{n}
 \end{equation*}
 we conclude
 \begin{equation*}
  \max\left(\{\delta^\frac{2l}{2l-n-2k}:k\leq l\leq\tfrac{n}{2}+k-1\}\cup\{\delta^\frac{-n-2i-4k}{n-2i}:0\leq i\leq\tfrac{n}{2}-1\}\right)=\delta^{1-n-2k}.
 \end{equation*}
 Therefore
 \begin{align*}
  \partial_t\|\phi^\frac{p}{2}\nabla^k\Rm\|_2^2
  & \leq -\tfrac{1}{n-2}\|\phi^\frac{p}{2}\nabla^{\frac{n}{2}+k}\Rm\|_2^2
     +\widetilde{C}\delta\|\phi^\frac{p}{2}\nabla^{\frac{n}{2}+k}\Rm\|_2^2+\widetilde{C}K^{\frac{n}{2}+k}\delta^{1-n-2k}\|\Rm\|_{2,[\phi>0]}^2 \\
  & \leq -\tfrac{1}{2(n-2)}\|\phi^\frac{p}{2}\nabla^{\frac{n}{2}+k}\Rm\|_2^2+CK^{\frac{n}{2}+k}\|\Rm\|_{2,[\phi>0]}^2,
 \end{align*}
 where
 \begin{equation*}
  \widetilde{C}\equiv\sum_{l=k}^{\frac{n}{2}+k-1}C_1+\sum_{i=0}^{\frac{n}{2}-1}C_2,\quad\delta\equiv\min\{\tfrac{1}{2(n-2)}\widetilde{C}^{-1},1\}.
 \end{equation*}
\end{proof}
\begin{proposition} \label{prop: integral BBS estimate for multiples of n/2}
 Suppose $M,\phi$ satisfy the above hypotheses. Suppose $\max\{\|\Rm\|_\infty,1\}\leq K$ for all $t\in[0,\alpha K^{-\frac{n}{2}}]$. Then
 \begin{equation*}
  \|\phi^{\frac{n}{2}(m+1)}\nabla^{\frac{n}{2}m}\Rm\|_2\leq Ct^{-\frac{m}{2}}\sup_{t\in[0,\alpha K^{-\frac{n}{2}}]}\|\Rm\|_{L^2(t),[\phi>0]}, 
 \end{equation*}
 where $C=C(m,n,\alpha,\Lambda)$, for all $t\in(0,\alpha K^{-\frac{n}{2}}]$.
\end{proposition}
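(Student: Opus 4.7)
The plan is to apply Proposition \ref{prop: interpolation type estimate for patial_t nabla^k Rm} repeatedly and to bundle the resulting chain of differential inequalities into a single time-weighted energy. For $j=0,1,\ldots,m$, set
\begin{equation*}
\Phi_j(t)=\|\phi^{n(j+1)/2}\nabla^{nj/2}\Rm\|_2^2,
\end{equation*}
and apply Proposition \ref{prop: interpolation type estimate for patial_t nabla^k Rm} with $p=n(j+1)$ and $k=nj/2$; here $p=n+2k$, which just meets the hypothesis. This yields
\begin{equation*}
\Phi_j'(t)\leq -\tfrac{1}{2(n-2)}\|\phi^{n(j+1)/2}\nabla^{n(j+1)/2}\Rm\|_2^2+CK^{n(j+1)/2}\|\Rm\|_{2,[\phi>0]}^2.
\end{equation*}
Since the cutoff satisfies $\phi\leq\Lambda$, one has $\phi^{n(j+2)/2}\leq\Lambda^{n/2}\phi^{n(j+1)/2}$, so the negative term dominates $\Phi_{j+1}(t)$ up to a constant. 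Hence there exists $c_1=c_1(n,\Lambda)>0$ such that for all $j=0,\ldots,m$,
\begin{equation*}
\Phi_j'(t)\leq -c_1\Phi_{j+1}(t)+CK^{n(j+1)/2}\|\Rm\|_{2,[\phi>0]}^2.
\end{equation*}

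Next, I would introduce the weighted energy
\begin{equation*}
G(t)=\sum_{j=0}^m a_j t^j\Phi_j(t),\qquad a_j=\frac{c_1^j}{2^j j!},
\end{equation*}
chosen so that $ja_j\leq\tfrac{c_1}{2}a_{j-1}$ for each $j\geq 1$. Differentiating in $t$, substituting the telescoping inequality, and reindexing, the positive $ja_j t^{j-1}\Phi_j$ produced by the time weight is absorbed by the $-c_1a_{j-1}t^{j-1}\Phi_j$ generated by Proposition \ref{prop: interpolation type estimate for patial_t nabla^k Rm} at level $j-1$, while the residual $-c_1a_m t^m\Phi_{m+1}$ is discarded. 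This gives
\begin{equation*}
G'(t)\leq C\sum_{j=0}^m a_j t^j K^{n(j+1)/2}\|\Rm\|_{2,[\phi>0]}^2.
\end{equation*}
On $[0,\alpha K^{-n/2}]$ one has $t^j K^{nj/2}\leq\alpha^j$, so the sum is bounded by $C(m,n,\alpha)K^{n/2}\|\Rm\|_{2,[\phi>0]}^2$.

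Integrating from $0$ to $t\in(0,\alpha K^{-n/2}]$ and using $G(0)=\Phi_0(0)\leq\Lambda^n\|\Rm(0)\|_{2,[\phi>0]}^2$, I obtain
\begin{equation*}
G(t)\leq G(0)+CK^{n/2}t\sup_{s\in[0,t]}\|\Rm(s)\|_{2,[\phi>0]}^2\leq C\sup\|\Rm\|_{2,[\phi>0]}^2,
\end{equation*}
since $tK^{n/2}\leq\alpha$. Extracting the $j=m$ term, $a_m t^m\Phi_m(t)\leq G(t)$, and taking square roots yields the claimed estimate.

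The main subtlety is choosing the exponents $p_j=n(j+1)$ (the minimum allowed by Proposition \ref{prop: interpolation type estimate for patial_t nabla^k Rm}) together with the time weights $t^j$ so that the chain $\Phi_j\to\Phi_{j+1}$ telescopes cleanly at each level. The constraint $t\leq\alpha K^{-n/2}$ is the parabolic time scale that makes each factor $a_j t^j K^{n(j+1)/2}$ stay bounded by $K^{n/2}$ times a universal constant; this is precisely the mechanism by which no power of $K$ beyond $K^{n/2}$ ever appears, and in fact, after the integration in $t$, no $K$ survives in the final constant at all.
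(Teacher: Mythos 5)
Your proposal is correct and follows essentially the same route as the paper: apply Proposition \ref{prop: interpolation type estimate for patial_t nabla^k Rm} at each level $k=\frac{n}{2}j$ with the minimal admissible weight $p=n(j+1)$, form a time-weighted energy $G(t)=\sum_j a_j t^j\Phi_j(t)$ with coefficients chosen so the positive terms from differentiating $t^j$ are absorbed by the negative terms at the previous level, bound the sources via $t^jK^{nj/2}\le\alpha^j$, and integrate. The paper's $\beta_k$ are just your $a_j$ up to normalization (it fixes the top coefficient to $1$ and solves downward), so the two arguments are the same.
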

\begin{proof}
 Define
 \begin{equation*}
  G(t)\equiv t^m\|\phi^{\frac{n}{2}(m+1)}\nabla^{\frac{n}{2}m}\Rm\|_2^2+\sum_{k=0}^{m-1}\beta_kt^k\|\phi^{\frac{n}{2}(k+1)}\nabla^{\frac{n}{2}k}\Rm\|_{2,[\phi>0]}^2.
 \end{equation*}
 Using Proposition \ref{prop: interpolation type estimate for patial_t nabla^k Rm},
 \begin{align*}
 \frac{dG}{dt}
 & \leq \begin{aligned}[t]
         & mt^{m-1}\|\phi^{\frac{n}{2}(m+1)}\nabla^{\frac{n}{2}m}\Rm\|_2^2 \\
         & {}+t^m\Big(\!-\tfrac{1}{2(n-2)}\|\phi^{\frac{n}{2}(m+1)}\nabla^{\frac{n}{2}(m+1)}\Rm\|_2^2+C_{\frac{n}{2}m}K^{\frac{n}{2}(m+1)}\|\Rm\|_{2,[\phi>0]}^2\Big) \\
         & {}+\sum_{k=1}^{m-1}\beta_kkt^{k-1}\|\phi^{\frac{n}{2}(k+1)}\nabla^{\frac{n}{2}k}\Rm\|_2^2 \\
         & {}+\sum_{k=0}^{m-1}\beta_kt^k\Big(\!-\tfrac{1}{2(n-2)}\|\phi^{\frac{n}{2}(k+1)}\nabla^{\frac{n}{2}(k+1)}\Rm\|_2^2+C_{\frac{n}{2}k}K^{\frac{n}{2}(k+1)}\|\Rm\|_{2,[\phi>0]}^2\Big)
        \end{aligned} \\
 & \leq \begin{aligned}[t]
         & mt^{m-1}\|\phi^{\frac{n}{2}m}\nabla^{\frac{n}{2}m}\Rm\|_2^2+t^m\Big(C_{\frac{n}{2}m}K^{\frac{n}{2}(m+1)}\|\Rm\|_{2,[\phi>0]}^2\Big) \\
         & {}+\sum_{k=0}^{m-2}\beta_{k+1}(k+1)t^k\|\phi^{\frac{n}{2}(k+1)}\nabla^{\frac{n}{2}(k+1)}\Rm\|_2^2 \\
         & {}+\sum_{k=0}^{m-1}\beta_kt^k\Big(\!-\tfrac{1}{2(n-2)}\|\phi^{\frac{n}{2}(k+1)}\nabla^{\frac{n}{2}(k+1)}\Rm\|_2^2+C_{\frac{n}{2}k}K^{\frac{n}{2}(k+1)}\|\Rm\|_{2,[\phi>0]}^2\Big).
        \end{aligned}   
 \end{align*}
If
\begin{equation*}
 mt^{m-1}\|\phi^{\frac{n}{2}m}\nabla^{\frac{n}{2}m}\Rm\|_2^2-\beta_{m-1}\tfrac{1}{2(n-2)}t^{m-1}\|\phi^{\frac{n}{2}m}\nabla^{\frac{n}{2}m}\Rm\|_2^2=0
\end{equation*}
then $\beta_{m-1}=2(n-2)m$.
If for $k$ satisfying $0\leq k\leq m-2$,
\begin{equation*}
 \beta_{k+1}(k+1)t^k\|\phi^{\frac{n}{2}(k+1)}\nabla^{\frac{n}{2}(k+1)}\Rm\|_2^2-\beta_k\tfrac{1}{2(n-2)}t^k\|\phi^{\frac{n}{2}(k+1)}\nabla^{\frac{n}{2}(k+1)}\Rm\|_2^2=0,
\end{equation*}
then
\begin{align*}
\beta_k &= 2(n-2)(k+1)\beta_{k+1} \\
& =(2n-4)^{m-k-1}(m-1)\cdots(k+1)\beta_{m-1} \\
& =(2n-4)^{m-k}m!/k!.
\end{align*}
Also define $\beta_m=1$. Using these choices for $\beta_k,0\leq k\leq m$ and choosing $t_0\in[0,\alpha K^{-\frac{n}{2}}]$ such that
\begin{equation*}
 \|\Rm\|_{L^2(t_0),[\phi>0]}=\sup_{t\in[0,\alpha K^{-\frac{n}{2}}]}\|\Rm\|_{L^2(t),[\phi>0]},
\end{equation*}
we have
\begin{align*}
 \frac{dG}{dt} &\leq\alpha^mK^{-\frac{n}{2}m}C_{\frac{n}{2}m}K^{\frac{n}{2}(m+1)}\|\Rm\|_{2,[\phi>0]}^2
 +\sum_{k=0}^{m-1}\beta_k\alpha^kK^{-\frac{n}{2}k}C_{\frac{n}{2}k}K^{\frac{n}{2}(k+1)}\|\Rm\|_{2,[\phi>0]}^2\\
 & =\sum_{k=0}^m\beta_kC_{\frac{n}{2}k}\alpha^kK^\frac{n}{2}\|\Rm\|_{2,[\phi>0]}^2 \\
 & =CK^\frac{n}{2}\|\Rm\|_{L^2(t_0),[\phi>0]}^2.
\end{align*}
Therefore
\begin{align*}
 t^m\|\phi^{\frac{n}{2}(m+1)}\nabla^{\frac{n}{2}m}\Rm\|_2^2 \leq G
 &\leq\beta_0\|\Rm\|_{L^2(0),[\phi>0]}^2+CK^\frac{n}{2}\|\Rm\|_{L^2(t_0),[\phi>0]}^2t \\
 & \leq(\beta_0+\alpha C)\|\Rm\|_{L^2(t_0),[\phi>0]}^2 \\
 & =C\|\Rm\|_{L^2(t_0),[\phi>0]}^2,
\end{align*}
proving the proposition.
\end{proof}

\begin{proposition} \label{prop: integral BBS estimate}
 Let $(M^n,g(t))$ be a solution to the AOF for $t\in[0,T)$. Let $\phi\in C_c^\infty(M)$ be a cutoff function such that
 \begin{equation*}
  \max_{0\leq i\leq\frac{n}{2}}\sup_{t\in[0,T)}\|\nabla^i\phi\|_{C^0(M,g(t))}\leq\Lambda.
 \end{equation*}
 Suppose $\max\{\|\Rm\|_{C^0(M,g(t))},1\}\leq K$ for all $t\in[0,\alpha K^{-\frac{n}{2}}]$. Then, for every $l\geq 0$ and all $t\in(0,\alpha K^{-\frac{n}{2}}]$,
 \begin{equation*}
  \|\phi^{l+\frac{n}{2}}\nabla^l\Rm\|_{L^2(M,g(t))}\leq C(1+t^{-\ceil{2l/n}/2})\sup_{t\in[0,\alpha K^{-\frac{n}{2}}]}\|\Rm\|_{L^2(\supp(\phi),g(t))}, 
 \end{equation*}
 where $C=C(l,n,\alpha,\Lambda)$.
\end{proposition}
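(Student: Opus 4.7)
The strategy is to reduce the general case to Proposition \ref{prop: integral BBS estimate for multiples of n/2}, which already establishes the result when $l$ is a multiple of $\tfrac{n}{2}$. For arbitrary $l \geq 0$, I set $m = \ceil{2l/n}$ so that $\tfrac{n}{2}(m-1) < l \leq \tfrac{n}{2}m$, and then I interpolate between $\nabla^l \Rm$ and $\nabla^{(n/2)m} \Rm$ using the weighted interpolation inequality already developed in Lemma \ref{lem: nabla^l to nabla^q interpolation inequality}. This kills two birds at once: interpolation provides the exponent $q = \tfrac{n}{2}m$ needed to invoke the multiples-of-$n/2$ estimate, and it also matches the cutoff weight $\phi^{l + n/2}$ required on the left-hand side to the weight $\phi^{(n/2)(m+1)}$ appearing in the hypothesis of Proposition \ref{prop: integral BBS estimate for multiples of n/2}.

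Concretely, I would apply Lemma \ref{lem: nabla^l to nabla^q interpolation inequality} with $q = \tfrac{n}{2}m$ and $r = n$, together with $\epsilon = 1$, to obtain
\begin{equation*}
  \int_M \phi^{2l+n} |\nabla^l \Rm|^2
  \leq \int_M \phi^{2q+n} |\nabla^q \Rm|^2 + C \int_{[\phi>0]} \phi^n |\Rm|^2,
\end{equation*}
with $C = C(l, n, \Lambda_1)$. Since $2q + n = 2 \cdot \tfrac{n}{2}(m+1)$, the first term on the right is exactly $\|\phi^{(n/2)(m+1)} \nabla^{(n/2)m} \Rm\|_2^2$, to which Proposition \ref{prop: integral BBS estimate for multiples of n/2} applies and yields a bound by $C t^{-m} \sup_{[0,\alpha K^{-n/2}]} \|\Rm\|_{L^2([\phi>0])}^2$. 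The second term is controlled trivially by $\Lambda^n \sup_{[0,\alpha K^{-n/2}]} \|\Rm\|_{L^2([\phi>0])}^2$, since $|\phi|\leq\Lambda$ and $\supp\phi = [\phi>0]$ (taking the closure). Summing the two contributions and taking square roots gives
\begin{equation*}
  \|\phi^{l + n/2} \nabla^l \Rm\|_{L^2(M,g(t))}
  \leq C\bigl(1 + t^{-m/2}\bigr) \sup_{t \in [0, \alpha K^{-n/2}]} \|\Rm\|_{L^2(\supp \phi, g(t))},
\end{equation*}
with $m = \ceil{2l/n}$, which is precisely the stated inequality.

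The boundary cases deserve a sanity check rather than being a real obstacle: when $l = \tfrac{n}{2}m$ is itself a multiple of $\tfrac{n}{2}$, the interpolation step is trivial and one just reads off the estimate from Proposition \ref{prop: integral BBS estimate for multiples of n/2}; when $l = 0$ the claim reduces to the bound $\|\phi^{n/2} \Rm\|_{L^2} \leq \Lambda^{n/2}\|\Rm\|_{L^2(\supp \phi)}$, which is immediate. The only mildly delicate point is confirming that the interpolation lemma is applicable with $r = n$ and $q = \tfrac{n}{2}m \geq l$, which requires $m \geq \lceil 2l/n \rceil$, satisfied by construction. Since the constants produced by Lemma \ref{lem: nabla^l to nabla^q interpolation inequality} depend only on $l, n, r, \Lambda_1$ and the constant from Proposition \ref{prop: integral BBS estimate for multiples of n/2} depends only on $m, n, \alpha, \Lambda$, the final constant $C$ depends only on $l, n, \alpha, \Lambda$, as required.
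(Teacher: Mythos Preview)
Your proof is correct and follows essentially the same route as the paper's: both apply Lemma \ref{lem: nabla^l to nabla^q interpolation inequality} with $r=n$ and $\epsilon=1$ to interpolate up to the nearest multiple of $\tfrac{n}{2}$, then invoke Proposition \ref{prop: integral BBS estimate for multiples of n/2}. The only difference is notational---the paper writes $l=\tfrac{n}{2}m+r$ with $1\leq r\leq\tfrac{n}{2}$ (so its $m+1$ equals your $\ceil{2l/n}$), and your treatment of the trivial cases $l=0$ and $l$ a multiple of $\tfrac{n}{2}$ is slightly more explicit.
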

\begin{proof}
 Let $l=\frac{n}{2}m+r,1\leq r\leq\frac{n}{2}$. Then, applying Lemma \ref{lem: nabla^l to nabla^q interpolation inequality} and Proposition \ref{prop: integral BBS estimate for multiples of n/2}, we get
 \begin{align*}
  \int_M\phi^{n(m+1)+2r}|\nabla^{\frac{n}{2}m+r}\Rm|^2
  & \leq\int_M\phi^{n(m+2)}|\nabla^{\frac{n}{2}(m+1)}\Rm|^2+C'\int_{[\phi>0]}\phi^n|\Rm|^2 \\
  & \leq t^{-(m+1)}C\Theta^2+C'\Theta^2 \\
  \|\phi^{l+\frac{n}{2}}\nabla^l\Rm\|_{L^2(t)}
  & \leq \Theta(Ct^{-\frac{m+1}{2}}+C'), 
 \end{align*}
 where
 \begin{equation*}
  \Theta=\sup_{t\in[0,\alpha K^{-\frac{n}{2}}]}\|\Rm\|_{L^2(t),[\phi>0]}.
 \end{equation*}
\end{proof}

\section{Pointwise Smoothing Estimates} \label{sec: pointwise smoothing estimates}
Let $(M,g(t))$ be a solution to AOF and let $\phi$ be a cutoff function on $M$. We give estimates of $|\nabla^i\phi|_{g(t)}$ for $1\leq i\leq\frac{n}{2}$ that depend on spacetime derivatives of the metric and $|\nabla^i\phi|_{g(0)}$ for $0\leq i\leq\frac{n}{2}$. We then give a proof of the pointwise smoothing estimates given in Theorem \ref{thm:C^m smoothing estimate along AOF}.
\begin{lemma} \label{lem: evolution of nabla^k(function)}
 Let $M$ be a manifold and $g(t)$ be a one-parameter family of metrics on $M$. For a function $\phi\in C^i(M)$ and $i\geq 2$,
 \begin{equation*}
  \partial_t\nabla^i\phi=\sum_{j=1}^{i-1}\nabla^{i-j}\partial_t g*\nabla^j\phi.
 \end{equation*}
\end{lemma}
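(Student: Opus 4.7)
The plan is to induct on $i$, treating $i = 2$ separately as the base case and then bootstrapping with Proposition \ref{prop: commutator of partial_t and nabla^k}.

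For the base case $i = 2$, I would compute directly. Since $\phi$ is independent of $t$ and $\nabla_a\nabla_b\phi = \partial_a\partial_b\phi - \Gamma_{ab}^c\partial_c\phi$, differentiating in $t$ gives $\partial_t\nabla^2\phi = -\partial_t\Gamma * \nabla\phi$. The standard identity $\partial_t\Gamma_{ab}^c = \tfrac{1}{2}g^{cd}(\nabla_a h_{bd} + \nabla_b h_{ad} - \nabla_d h_{ab})$ with $h = \partial_t g$ shows that $\partial_t\Gamma = \nabla\partial_t g$ in the $*$ notation, giving $\partial_t\nabla^2\phi = \nabla\partial_t g * \nabla\phi$, which matches the claimed formula at $i = 2$.

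For the inductive step, assume the formula holds for some $i \geq 2$. Write $\nabla^{i+1}\phi = \nabla(\nabla^i\phi)$, and apply Proposition \ref{prop: commutator of partial_t and nabla^k} with $A = \nabla^i\phi$ (now a genuine $(0,i)$-tensor) and $k = 1$ to obtain
\begin{equation*}
\partial_t\nabla^{i+1}\phi = \nabla\partial_t\nabla^i\phi + \nabla\partial_t g * \nabla^i\phi.
\end{equation*}
Substituting the inductive hypothesis for $\partial_t\nabla^i\phi$ and distributing the outer $\nabla$ by Leibniz yields
\begin{equation*}
\partial_t\nabla^{i+1}\phi = \sum_{j=1}^{i-1}\bigl(\nabla^{i+1-j}\partial_t g * \nabla^j\phi + \nabla^{i-j}\partial_t g * \nabla^{j+1}\phi\bigr) + \nabla\partial_t g * \nabla^i\phi.
\end{equation*}
Reindexing the second summand via $j \mapsto j-1$ produces terms with $\nabla^j\phi$ for $j \in \{2,\ldots,i\}$, and combining with the boundary contribution $\nabla\partial_t g * \nabla^i\phi$ (which fills in the $j = i$ slot) absorbs constant coefficients into the $*$ notation to give $\sum_{j=1}^{i}\nabla^{i+1-j}\partial_t g * \nabla^j\phi$, completing the induction.

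There is no substantial obstacle in this argument. The only subtlety worth noting is that one should not invoke Proposition \ref{prop: commutator of partial_t and nabla^k} directly with $A = \phi$, since $\phi$ has no tensor indices for the $\partial_t\Gamma$ contractions to act on, and doing so would produce a spurious $\nabla^i\partial_t g * \phi$ boundary term absent from the claimed formula. Applying the proposition only at the level $k = 1$, where the input $\nabla^i\phi$ genuinely carries $i$ indices, sidesteps this issue and yields exactly the correct range $j \in \{1,\ldots,i\}$.
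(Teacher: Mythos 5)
Your proof is correct and follows essentially the same route as the paper: both establish the base case $i=2$ from $\partial_t\nabla^2\phi=\nabla\partial_t g*\nabla\phi$ (the paper cites Proposition \ref{prop: commutator of partial_t and nabla^k} together with $\partial_t\nabla\phi=0$, you do the equivalent coordinate computation directly) and then induct by applying that proposition with $k=1$ to $A=\nabla^{i}\phi$, substituting the inductive hypothesis, and reindexing. Your closing remark about why one should not apply the commutator proposition with $A=\phi$ and $k=i$ is exactly the subtlety the paper's choice of induction is designed to avoid.
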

\begin{proof}
 If $i=2$, the statement is true since via Proposition \ref{prop: commutator of partial_t and nabla^k}, we have
 \begin{equation*}
  \partial_t\nabla^2\phi=\nabla\partial_t\nabla\phi+\nabla\partial_t g*\nabla\phi=\nabla\partial_t g*\nabla\phi.
 \end{equation*}
 Suppose $i>2$ and the statement is true for every $j$ such that $2\leq j\leq i-1$. Then, using Proposition \ref{prop: commutator of partial_t and nabla^k} in the second line,
 \begin{align*}
  \partial_t\nabla^i\phi
  & =\partial_t\nabla\nabla^{i-1}\phi \\
  & =\nabla\partial_t\nabla^{i-1}\phi+\nabla\partial_t g*\nabla^{i-1}\phi \\
  & =\nabla\sum_{j=1}^{i-2}\nabla^{i-1-j}\partial_t g*\nabla^j\phi+\nabla\partial_t g*\nabla^{i-1}\phi \\
  & =\sum_{j=1}^{i-2}\nabla^{i-j}\partial_t g*\nabla^j\phi+\sum_{j=1}^{i-2}\nabla^{i-1-j}\partial_t g*\nabla^{j+1}\phi+\nabla\partial_t g*\nabla^{i-1}\phi \\
  & =\sum_{j=1}^{i-1}\nabla^{i-j}\partial_t g*\nabla^j\phi+\sum_{j=2}^{i-1}\nabla^{i-j}\partial_t g*\nabla^j\phi \\
  & =\sum_{j=1}^{i-1}\nabla^{i-j}\partial_t g*\nabla^j\phi.
 \end{align*}
\end{proof}
\begin{proposition} \label{prop: evolution of norm of nabla^k(function)}
 Let $M$ be a manifold and $g(t)$ be a one-parameter family of metrics on $M$. For a function $\phi\in C^i(M)$ and $i\geq 1$,
 \begin{equation*}
  \partial_t|\nabla^i\phi|_{g(t)}^2=\sum_{j=1}^i\nabla^{i-j}\partial_t g*\nabla^j\phi*\nabla^i\phi.
 \end{equation*}
\end{proposition}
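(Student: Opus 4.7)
The strategy is a direct product-rule computation on the coordinate expression for $|\nabla^i \phi|_{g(t)}^2$, combined with the previous lemma.

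First I would write out
\begin{equation*}
|\nabla^i \phi|_{g(t)}^2 = g^{a_1 b_1}\cdots g^{a_i b_i}\,(\nabla^i \phi)_{a_1\cdots a_i}\,(\nabla^i \phi)_{b_1\cdots b_i}
\end{equation*}
and differentiate in $t$ using the Leibniz rule. The derivative either falls on one of the $i$ inverse metrics, or on one of the two copies of $\nabla^i\phi$. The two copies of $\nabla^i \phi$ contribute identically by the symmetry of the contraction, so I only need to track one of them and double (this factor is absorbed into the $*$ notation anyway).

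For the inverse metric terms, the identity $\partial_t g^{ab} = -g^{ac}g^{bd}\partial_t g_{cd}$ produces a contribution of the form $\partial_t g * \nabla^i\phi * \nabla^i\phi$, which is exactly the $j = i$ summand on the right-hand side (where $\nabla^{i-j}\partial_t g = \partial_t g$ when $j=i$). For the terms where $\partial_t$ lands on $\nabla^i \phi$, Lemma~\ref{lem: evolution of nabla^k(function)} gives
\begin{equation*}
\partial_t \nabla^i \phi = \sum_{j=1}^{i-1} \nabla^{i-j}\partial_t g * \nabla^j \phi,
\end{equation*}
and contracting this with the remaining copy of $\nabla^i \phi$ yields precisely the summands with $j = 1,\dots,i-1$ on the right-hand side of the claim.

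The base case $i = 1$ is handled separately but trivially, since Lemma~\ref{lem: evolution of nabla^k(function)} was only stated for $i\geq 2$; here one directly computes $\partial_t(g^{ab}\nabla_a\phi\,\nabla_b\phi) = \partial_t g * \nabla\phi * \nabla\phi$, matching the $j=1$ term. I do not anticipate any serious obstacle: the only subtlety is bookkeeping the index $j=i$ contribution coming from the metric variation (which is not covered by Lemma~\ref{lem: evolution of nabla^k(function)}) versus the $j<i$ contributions coming from the commutator formula, and verifying that together they assemble into the single sum $\sum_{j=1}^i$ in the stated form.
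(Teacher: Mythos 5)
Your proposal is correct and matches the paper's proof essentially verbatim: the paper also splits $\partial_t|\nabla^i\phi|^2$ into $\partial_t g*(\nabla^i\phi)^{*2}$ (the $j=i$ term, from varying the metric contractions) plus $\partial_t\nabla^i\phi*\nabla^i\phi$, which Lemma \ref{lem: evolution of nabla^k(function)} converts into the $j=1,\dots,i-1$ terms. Your remark about the $i=1$ case is a fine point the paper leaves implicit (there the sum from the lemma is empty and $\partial_t\nabla\phi=\partial_t\,d\phi=0$), but it changes nothing of substance.
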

\begin{proof}
 We compute, using the preceding Lemma \ref{lem: evolution of nabla^k(function)} in the second line:
 \begin{align*}
  \partial_t|\nabla^i\phi|_{g(t)}^2
  & =\partial_t g*\nabla^i\phi^{*2}+\partial_t\nabla^i\phi*\nabla^i\phi \\
  & =\partial_t g*\nabla^i\phi^{*2}+\sum_{j=1}^{i-1}\nabla^{i-j}\partial_t g*\nabla^j\phi*\nabla^i\phi \\
  & =\sum_{j=1}^i\nabla^{i-j}\partial_t g*\nabla^j\phi*\nabla^i\phi.
 \end{align*}
\end{proof}
\begin{proposition} \label{prop: control of cutoff function as time varies}
 Let $M$ be a Riemannian manifold with a one-parameter family of metrics $\{g(t)\}_{t\in[0,T]}$ and $\phi\in C_c^\infty(M)$. Fix $i\geq 1$. Suppose that, for each $j$ satisfying $0\leq j\leq i-1$, there exists $K_j>0$ such that $|\nabla^j\partial_t g(x,t)|_{g(t)}\leq K_j$ on $\supp\phi\x[0,T]$ and, for each $j$ satisfying $1\leq j\leq i$, there exists $C_j'>0$ such that $|\nabla^j\phi|_{g(0)}\leq C_j'$ on $\supp\phi$. Then there exists a constant $C_i$ such that
 \begin{equation*}
  |\nabla^i\phi|_{g(t)}^2\leq C_i=C_i(K_0,\dots,K_{i-1},C_1',\dots,C_i',T).
 \end{equation*}
\end{proposition}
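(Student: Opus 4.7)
The plan is to induct on $i$ and apply a Gronwall-type argument to the evolution equation for $|\nabla^i\phi|_{g(t)}^2$ provided by Proposition \ref{prop: evolution of norm of nabla^k(function)}.

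For the base case $i=1$, that proposition reduces to
\[
\partial_t|\nabla\phi|_{g(t)}^2=\partial_t g*\nabla\phi*\nabla\phi,
\]
so $|\partial_t|\nabla\phi|_{g(t)}^2|\leq c\,K_0\,|\nabla\phi|_{g(t)}^2$ pointwise on $\supp\phi$. Together with the initial bound $|\nabla\phi|_{g(0)}^2\leq (C_1')^2$, Gronwall's inequality delivers $|\nabla\phi|_{g(t)}^2\leq (C_1')^2 e^{cK_0 T}$, which is of the required form.

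For the inductive step, assume the conclusion holds for every order between $1$ and $i-1$, with constants $C_1,\dots,C_{i-1}$ of the stated dependence. Separating the $j=i$ summand in Proposition \ref{prop: evolution of norm of nabla^k(function)} gives
\[
\partial_t|\nabla^i\phi|_{g(t)}^2=\partial_t g*\nabla^i\phi*\nabla^i\phi+\sum_{j=1}^{i-1}\nabla^{i-j}\partial_t g*\nabla^j\phi*\nabla^i\phi.
\]
In the sum, the index $i-j$ ranges over $\{1,\dots,i-1\}$, so each factor $|\nabla^{i-j}\partial_t g|_{g(t)}$ is controlled by the corresponding $K_{i-j}$ by hypothesis, while each $|\nabla^j\phi|_{g(t)}$ with $1\leq j\leq i-1$ is bounded by $C_j^{1/2}$ thanks to the inductive hypothesis. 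Writing $u(t)=|\nabla^i\phi|_{g(t)}^2$, this yields a pointwise differential inequality of the form
\[
\partial_t u\leq A\,u+B\,u^{1/2},
\]
with $A$ and $B$ depending only on $n$, $K_0,\dots,K_{i-1}$, $C_1',\dots,C_{i-1}'$ and $T$. Absorbing via $u^{1/2}\leq\tfrac12(1+u)$ converts this to a linear differential inequality $\partial_t u\leq\widetilde{A}u+\widetilde{B}$; combined with the initial bound $u(0)\leq (C_i')^2$, Gronwall's inequality produces a constant $C_i=C_i(K_0,\dots,K_{i-1},C_1',\dots,C_i',T)$ with $|\nabla^i\phi|_{g(t)}^2\leq C_i$ on $\supp\phi\times[0,T]$.

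The argument is essentially bookkeeping: no genuine analytic obstacle arises beyond the evolution equation of Proposition \ref{prop: evolution of norm of nabla^k(function)} and Gronwall. The only point requiring care is tracking how each new constant depends on the previous ones together with all of the initial derivative bounds $C_1',\ldots,C_i'$, so that the hierarchy of Gronwall estimates closes up with the correct parameter dependence claimed in the statement.
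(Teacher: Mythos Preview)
Your proof is correct and follows essentially the same route as the paper: induction on $i$, with the base case and inductive step both driven by the evolution formula of Proposition \ref{prop: evolution of norm of nabla^k(function)}, separating the top-order term $\partial_t g*\nabla^i\phi*\nabla^i\phi$ from the lower-order ones, and closing with a Gronwall argument after absorbing $u^{1/2}$ into $1+u$. The paper's write-up is organized identically, so there is nothing substantive to compare.
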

\begin{proof}
 Let $i=1$. Then Proposition \ref{prop: evolution of norm of nabla^k(function)} gives
 \begin{equation*}
  \partial_t|\nabla\phi|_{g(t)}^2=\partial_t g*\nabla\phi^{*2}\leq CK_0|\nabla\phi|_{g(t)}^2.
 \end{equation*}
 Solving the differential inequality, we get
 \begin{equation*}
  |\nabla\phi|_{g(t)}^2\leq|\nabla\phi|_{g(0)}^2e^{CK_0T}\equiv C_1^2
 \end{equation*}
 which proves the proposition for $i=1$.
 
 Fix $i\geq 2$ and suppose that the proposition is true for every $j$ satisfying $1\leq j\leq i-1$. Let $f(t)=|\nabla^i\phi|_{g(t)}^2$. Then, via Proposition \ref{prop: evolution of norm of nabla^k(function)},
 \begin{align*}
  \frac{df}{dt}
  & \leq\sum_{j=1}^i|\nabla^{i-j}\partial_t g*\nabla^j\phi*\nabla^i\phi| \\
  & \leq\sum_{j=1}^{i-1}|\nabla^{i-j}\partial_t g||\nabla^j\phi||\nabla^i\phi|+|\partial_t g||\nabla^i\phi|^2 \\
  & \leq\sum_{j=1}^{i-1}CK_{i-j}C_jf^\frac{1}{2}+CK_0f \\
  & \leq\widetilde{C}(K_0,\dots,K_{i-1},C_1,\dots,C_{i-1})(1+f) \\
  & =\widetilde{C}(K_0,\dots,K_{i-1},C_1',\dots,C_{i-1}',T)(1+f).
 \end{align*}
 Solving the differential inequality, we get
 \begin{align*}
  1+f(t) & \leq(1+f(0))\widetilde{C}t \\
  |\nabla^i\phi|_{g(t)}^2 & \leq (1+|\nabla^i\phi|_{g(0)}^2)e^{\widetilde{C}T} \\
  & \leq (1+(C_i')^2)e^{\widetilde{C}T}\equiv C_i^2.
 \end{align*}
\end{proof}

\begin{proposition} \label{prop: control of nabla^k Rm using cutoff function at one time}
 Let $(M^n,g(t))$ solve AOF on $[0,T]$, where $n\geq 4$. Fix $r>0$. Suppose there exist $x\in M$, $r>0$, and $K>0$ such that
\begin{equation}  \label{eqn: control of nabla^k Rm using cutoff function at one time: curvature bound}
\max\left[1,\sup_{[0,T]}\|\Rm\|_{C^0(B_{g(T)}(x,2r),g(t))}\right]+\sum_{j=1}^{3n/2-3}\sup_{[0,T]}\|\Rm\|^{\frac{2}{j+2}}_{C^0(B_{g(T)}(x,2r),g(t))}<K.
\end{equation}
 Then for all $l\geq 0$ and $t\in(0,T]$,
 \begin{equation} \label{eqn: control of nabla^k Rm using cutoff function at one time: desired bound}
  \|\nabla^l\Rm\|_{L^2(B_{g(T)}(x,r),g(t))}\leq C(1+t^{-\ceil{2l/n}/2})\sup_{t\in[0,T]}\|\Rm\|_{L^2(B_{g(T)}(x,2r),g(t))}, 
 \end{equation}
 where $C=C(n,l,K,T,\frac{1}{r})$.
\end{proposition}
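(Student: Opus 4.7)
The plan is to construct a cutoff function $\phi$ adapted to the ball $B_{g(T)}(x,2r)$, show that its covariant derivatives $|\nabla^i\phi|_{g(t)}$ remain uniformly bounded for $t\in[0,T]$ and $0\leq i\leq n/2$, and then invoke the integral BBS estimate from Proposition \ref{prop: integral BBS estimate}.

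First, I would choose a standard $\phi\in C_c^\infty(M)$ with $\phi\equiv 1$ on $B_{g(T)}(x,r)$, $\supp\phi\subset B_{g(T)}(x,2r)$, and
\begin{equation*}
\max_{0\leq i\leq n/2}\,\sup_M |\nabla^i\phi|_{g(T)}\leq C(n)/r^i,
\end{equation*}
obtained for instance by mollifying a cutoff of the $g(T)$-distance from $x$. Here the bounds are with respect to $g(T)$ only; the goal of the next step is to transfer them to $g(t)$ for all $t\in[0,T]$.

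Next I would apply Proposition \ref{prop: control of cutoff function as time varies} in reversed time, using $g(T)$ as the ``initial'' metric for $\phi$ and running backward to $g(0)$. That proposition requires, for each $i\leq n/2$, bounds on $|\nabla^j\partial_t g|_{g(t)}$ for $0\leq j\leq i-1$ on $\supp\phi\times[0,T]$. Since $\partial_t g=\Obhat$ and Proposition \ref{prop: Obhat expanded} writes $\Obhat$ as a universal polynomial in the curvature and at most $n-2$ of its covariant derivatives, $\nabla^j\partial_t g$ is a polynomial in $\nabla^i\Rm$ for $i\leq (n-2)+(n/2-1)=3n/2-3$. The hypothesis \eqref{eqn: control of nabla^k Rm using cutoff function at one time: curvature bound} bounds precisely these derivatives (with the scaling-adapted exponent $2/(j+2)$ rendering the sum dimensionally a curvature), so H\"older/interpolation in the polynomial terms yields
\begin{equation*}
\sup_{[0,T]}|\nabla^j\partial_t g|_{g(t)}\leq C(n,K)\quad\text{on }\supp\phi,\qquad 0\leq j\leq n/2-1.
\end{equation*}
Inductively applying the Gronwall argument of Proposition \ref{prop: control of cutoff function as time varies} from $t=T$ backward then gives a constant $\Lambda=\Lambda(n,K,T,1/r)$ with
\begin{equation*}
\max_{0\leq i\leq n/2}\,\sup_{t\in[0,T]}|\nabla^i\phi|_{C^0(M,g(t))}\leq\Lambda.
\end{equation*}

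Finally I would apply Proposition \ref{prop: integral BBS estimate} with this $\phi$ and with $\alpha:=TK^{n/2}$, so that $[0,\alpha K^{-n/2}]=[0,T]$. Since $\phi\equiv 1$ on $B_{g(T)}(x,r)$, the left-hand side dominates $\|\nabla^l\Rm\|_{L^2(B_{g(T)}(x,r),g(t))}$, and since $\supp\phi\subset B_{g(T)}(x,2r)$, the right-hand side is bounded by $C\cdot(1+t^{-\lceil 2l/n\rceil/2})\sup_{[0,T]}\|\Rm\|_{L^2(B_{g(T)}(x,2r),g(t))}$, producing the claimed inequality \eqref{eqn: control of nabla^k Rm using cutoff function at one time: desired bound}.

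The main obstacle is the second step: certifying that $|\nabla^i\phi|_{g(t)}$ does not degenerate over the time interval. This is where the curvature-derivative hypothesis is used in an essential way, because $\partial_t g$ is an $n$-th order expression in $g$ and controlling $\nabla^{n/2-1}\partial_t g$ forces us to control derivatives of $\Rm$ up to order $3n/2-3$. Once this bookkeeping is done, the rest of the argument is a direct application of earlier results.
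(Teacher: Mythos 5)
Your proposal follows the same route as the paper's proof: build a cutoff adapted to $B_{g(T)}(x,2r)$ with derivative bounds at time $T$ (using the curvature hypothesis to control the higher covariant derivatives of the cutoff), propagate those bounds to all of $[0,T]$ via Proposition \ref{prop: control of cutoff function as time varies} using the bounds on $\nabla^j\Obhat$ for $j\leq\frac{n}{2}-1$ implied by \eqref{eqn: control of nabla^k Rm using cutoff function at one time: curvature bound}, and then apply Proposition \ref{prop: integral BBS estimate}. The argument is correct and matches the paper, including the derivative count up to order $\tfrac{3n}{2}-3$; the only cosmetic difference is that your constant in the construction of $\phi$ at time $T$ should also depend on $K$, as the Hessian and higher derivatives of the distance function require the curvature bounds.
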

\begin{proof}
 Let $\phi$ be a cutoff function that is equal to $1$ on $B_{g(T)}(x,r)$ and supported on $B_{g(T)}(x,2r)$. The inequality \eqref{eqn: control of nabla^k Rm using cutoff function at one time: curvature bound} provides $C^0$ bounds for the first $\frac{n}{2}-2$ covariant derivatives of $\Rm$, so that
 \begin{equation} \label{eqn: control of nabla^k Rm using cutoff function at one time: cutoff bound}
  \max_{0\leq j\leq\frac{n}{2}}\|\nabla^j\phi\|_{C^0(M,g(T))}\leq C'(n,K,\tfrac{1}{r}).
 \end{equation}
The inequality \eqref{eqn: control of nabla^k Rm using cutoff function at one time: cutoff bound} provides bounds for the first $\frac{n}{2}$ covariant derviatives of $\phi$ at time $T$, and the inequality \eqref{eqn: control of nabla^k Rm using cutoff function at one time: curvature bound} indudes bounds on the first $\frac{n}{2}-1$ covariant derivatives of $\Obhat$. We therefore are able to, for each $t\in[0,T]$ and $j$ satisfying $0\leq j\leq\frac{n}{2}$, to obtain via Proposition \ref{prop: control of cutoff function as time varies} bounds given by
 \begin{equation*}
  \|\nabla^j\phi\|_{C^0(M,g(t))}\leq\widetilde{C}_j(n,K,\tfrac{1}{r},T).
 \end{equation*}
 Therefore, via Proposition \ref{prop: integral BBS estimate},
 \begin{align*}
  \|\nabla^l\Rm\|_{L^2(B_{g(T)}(x,r),g(t))}
  & \leq\|\phi^{l+\frac{n}{2}}\nabla^l\Rm\|_{L^2(M,g(t))} \\
  & \leq C(1+t^{-\ceil{2l/n}/2})\sup_{t\in[0,T]}\|\Rm\|_{L^2(\supp(\phi),g(t))} \\
  & = C(1+t^{-\ceil{2l/n}/2})\sup_{t\in[0,T]}\|\Rm\|_{L^2(B_{g(T)}(x,2r),g(t))}
 \end{align*}
 where $C=C(n,l,K,T,\frac{1}{r})$.
\end{proof}
We are now able to prove the pointwise smoothing estimates given in Theorem \ref{thm:C^m smoothing estimate along AOF}.
\begin{proof}[Proof of Theorem \ref{thm:C^m smoothing estimate along AOF}]
 We adapt the proof of Theorem 1.3 in Streets \cite{StreetsLongTimeBehaviorOfFourthOrderCurvatureFlows}. We will show that if this inequality fails, we can construct a blowup limit that is flat and has nonzero curvature. Consider the function given by
 \begin{equation*}
  f_m(x,t,g)=\sum_{j=1}^m|\nabla^j\Rm(g(x,t))|_{g(t)}^{\frac{2}{j+2}}.
 \end{equation*}
 It suffices to show that
 \begin{equation} \label{eqn:C^m smoothing estimate along AOF:f_m inequality}
   f_m(x,t,g)\leq C\left(K+\frac{1}{t^\frac{2}{n}}\right) 
 \end{equation}
 since for every $l$ satisfying $1\leq l\leq m$,
 \begin{equation*}
  |\nabla^l\Rm(g(x,t))|_{g(t)}^{\frac{2}{l+2}}\leq\sum_{j=1}^m|\nabla^j\Rm(g(x,t))|_{g(t)}^{\frac{2}{j+2}}=f_m(x,t,g)\leq C\left(K+\frac{1}{t^\frac{2}{n}}\right)
 \end{equation*}
 and
 \begin{equation*}
  |\nabla^l\Rm(g(x,t))|_{g(t)}\leq C\left(K+\frac{1}{t^\frac{2}{n}}\right)^{\frac{l+2}{2}}\leq C\left(K+\frac{1}{t^\frac{2}{n}}\right)^{\frac{m+2}{2}}.
 \end{equation*}
 Suppose that the inequality \eqref{eqn:C^m smoothing estimate along AOF:f_m inequality} fails. It suffices to take $m\geq\frac{3n}{2}-3$. Without loss of generality, for each $i\in\N$ there exists a solution to AOF $(M_i^n,g_i(t))$ and $(x_i,t_i)\in M_i\x(0,T]$ such that
 \begin{equation*}
  i<\frac{f_m(x_i,t_i,g_i)}{K+t_i^{-\frac{2}{n}}}=\sup_{M_i\x(0,T]}\frac{f_m(x,t,g_i)}{K+t^{-\frac{2}{n}}}<\infty.
 \end{equation*}
 and define a new sequence of blown up metrics by
 \begin{equation*}
  \widetilde{g}_i(t)=\lambda_ig_i(t_i+\lambda_i^{-\frac{n}{2}}t),
 \end{equation*}
 where $\lambda_i=f_m(x_i,t_i,g_i)$.
 We will show in the next section that these metrics also solve AOF. These metrics, which are defined for $t\in[-\lambda_i^\frac{n}{2}t_i,0]$, are eventually defined on $[-1,0]$ since as $i\to\infty$,
 \begin{equation*}
  t_i^{\frac{2}{n}}\lambda_i=\frac{f_m(x_i,t_i,g_i)}{t_i^{-\frac{2}{n}}}\geq\frac{f_m(x_i,t_i,g_i)}{K+t_i^{-\frac{2}{n}}}\to\infty.
 \end{equation*}
 Replace the sequence of AOF solutions $\{(M_i,\widetilde{g}_i(t))\}_{i\in\N}$ with the tail subsequence for which $\lambda_i^{\frac{n}{2}}t_i>1$. The curvatures of these manifolds converge to $0$ since as $i\to\infty$,
 \begin{equation} \label{eqn: :C^m smoothing estimate along AOF: curv conv to 0}
  |\Rm(\widetilde{g}_i)|_{\tilde{g}_i}\leq\frac{K}{\lambda_i}=\frac{K}{f_m(x_i,t_i,g_i)}\leq\frac{K+t_i^{-\frac{2}{n}}}{f_m(x_i,t_i,g_i)}\to 0.
 \end{equation}

 Furthermore, there is a uniform $C^m$ estimate on the curvature given by
 \begin{align} \label{eq:C^m smoothing estimate along AOF: blowup curvature bound}
  f_m(x,t,\widetilde{g}_i)
  & = \frac{f_m(x_i,t_i+t\lambda_i^{-\frac{n}{2}},g_i)}{\lambda_i} \nonumber \\
  & = \frac{f_m(x_i,t_i+t\lambda_i^{-\frac{n}{2}},g_i)}{f_m(x_i,t_i,g_i)} \nonumber \\
  & \leq \frac{K+(t_i+t\lambda_i^{-\frac{n}{2}})^{-\frac{2}{n}}}{K+t_i^{-\frac{2}{n}}} \nonumber \\
  & \leq \frac{K+t_i^{-\frac{2}{n}}(1+\tfrac{t}{2})^{-\frac{2}{n}}}{K+t_i^{-\frac{2}{n}}} \nonumber \\
  & \leq 2^{\frac{2}{n}}
 \end{align}
 for all $i\in\N$ and $(x,t)\in M_i\x[-1,0]$. 

 Let $\phi_i:B(0,1)\to M_i$ be given by $\exp_{x_i}$ with respect to $g_i(0)$ for each $i\in\N$ and $h_i(t)\equiv\phi_i^*g_i(t)$. The uniform $C^0$ bound on $\Rm(\widetilde{g_i}(t))$ given by \eqref{eq:C^m smoothing estimate along AOF: blowup curvature bound} induces a uniform bound on $(\phi_i)_*$ (see Petersen \cite{PetersenRiemannianGeometry}) which permits the uniform $C^m$ estimate \eqref{eq:C^m smoothing estimate along AOF: blowup curvature bound} on $\Rm(\widetilde{g_i}(t))$ to lift to a  uniform $C^m$ estimate on  $\Rm(h_i(t))$. Furthermore, $h_i(t)$ solves AOF for all $i$ since $\phi_i$ does not depend on $t$.

Since $m\geq\frac{3n}{2}-3$, we have uniform $C^0$ bounds on $\nabla^j\Obhat(g(t))$ for $0\leq j\leq\frac{n}{2}-1$. Via Proposition \ref{prop: control of nabla^k Rm using cutoff function at one time}, we obtain uniform bounds on the $L^2(B_{h_i(0)}(0,\frac{1}{2}))$-norms of all covariant derivatives of $\Rm(h_i(0))$. Since the metrics $h_i(0)$ are uniformly equivalent to the Euclidean metric, the Sobolev constant of $B_{h_i(0)}(0,\frac{1}{2})$ is uniformly bounded for all $i$. Via the Kondrakov compactness theorem, we thus obtain uniform bounds on the $C^0(B_{h_i(0)}(0,\frac{1}{2}))$-norms of all covariant derivatives of $\Rm(h_i(0))$. The Taylor expansion for $h_i$ in terms of geodesic coordinates about $0$ with curvature coefficients can then be used to obtain uniform bounds on the $C^0(B_{h_i(0)}(0,\frac{1}{2}))$-norms of all covariant derivatives of $h_i(0)$. Finally, by the Arzel\`{a}-Ascoli - type Proposition \ref{prop: arzela-ascoli for metrics}, after taking a subsequence, still named $\{h_i(0)\}_{i\in\N}$, we get $h_i(0)\to h_\infty$ in $C^\infty(B(0,\frac{1}{2}))$ for some Riemannian metric $h_\infty$. We have already shown with inequality \eqref{eqn: :C^m smoothing estimate along AOF: curv conv to 0} that $(B(0,\frac{1}{2}),h_\infty)$ is flat. However, for all $i\in\N$,
 \begin{align*}
  f_m(x_i,0,g_i)
  & =\sum_{j=1}^m|\nabla_{\widetilde{g}_i}^j\Rm(\widetilde{g}_i)(x_i,0)|_{\tilde{g}_i(0)}^{\frac{2}{2+j}} \\
  & =\sum_{j=1}^m\left(\lambda_i^{-\frac{j+2}{2}}|\nabla^j\Rm(x_i,t_i)|_{g(t_i)}\right)^{\frac{2}{2+j}} \\
  & =\sum_{j=1}^m\lambda_i^{-1}|\nabla^j\Rm(x_i,t_i)|_{g(t_i)}^{\frac{2}{2+j}} \\
  & =\lambda_i^{-1}\lambda_i=1.
 \end{align*}
Also, $f_m(0,0,h_i)=1$ for all $i$ since $(\phi_i)_*$ is the identity map at $0$. Therefore $f_m(0,0,h_\infty)=1$. This is a contradiction, thereby proving the the inequality \eqref{eqn:C^m smoothing estimate along AOF:f_m inequality}.
\end{proof}

\section{Long Time Existence} \label{sec: long time existence}
In this section, we prove that if a solution $(M,g(t))$ to the AOF only exists for a finite time $T$, then $\|\Rm\|_\infty$ becomes unbounded along a sequence $\{(x_n,t_n)\}_{n=1}^\infty\subset M\times[0,T)$ with $t_n\uparrow T$.
We will prove this theorem by showing that if actually
\begin{equation}
 \sup_{t\in[0,T)}\|\Rm\|_{C^0(g(t))}=K<\infty, \label{eq: |Rm|_g(t) bdd on M x [0,T)}
\end{equation}
then the solution $g(t)$ exists past the time $T$. In order to show this, we show that \eqref{eq: |Rm|_g(t) bdd on M x [0,T)} and the pointwise smoothing estimates on $|\nabla^k\Rm|_{g(t)}$ induce bounds on $|\bar{\nabla}^k g(t)|_{\bar{g}}$ with respect to some fixed background metric $\bar{g}$ and connection $\bar{\nabla}$. We also show that \eqref{eq: |Rm|_g(t) bdd on M x [0,T)} implies uniform convergence of $g(t)$ to some continuous metric $g(T)$. The bounds on $|\bar{\nabla}^k g(t)|_{\bar{g}}$ imply that $g(T)$ is smooth, so that we can extend the solution $g(t)$ past the time $T$ via the short time existence theorem \ref{thm: short time existence}.

We first show that if \eqref{eq: |Rm|_g(t) bdd on M x [0,T)} holds, the metrics $g(t)$ converge uniformly as $t\uparrow T$ to a continuous metric $g(T)$ equivalent to each $g(t)$. The following lemma is from Chow-Knopf \cite{ChowKnopfRicciFlowIntro}:
\begin{lemma} \label{lem: uniform convergence of metrics satisfying differential inequality on [0,T)}
 Let $M$ be a closed manifold. For $0\leq t<T\leq\infty$, let $g(t)$ be a one-parameter family of metrics on $M$ depending smoothly on both space and time. If there exists a constant $C<\infty$ such that
 \begin{equation*}
  \int_0^T\left|\frac{\partial}{\partial t}g(x,t)\right|_{g(t)}\intd t\leq C
 \end{equation*}
 for all $x\in M$, then
 \begin{equation*}
  e^{-C}g(x,0)\leq g(x,t)\leq e^Cg(x,0)
 \end{equation*}
 for all $x\in M$ and $t\in[0,T)$. Furthermore, as $t\uparrow T$, the metrics $g(t)$ converge uniformly to a continuous metric $g(T)$ such that for all $x\in M$,
 \begin{equation*}
  e^{-C}g(x,0)\leq g(x,T)\leq e^Cg(x,0).
 \end{equation*}
\end{lemma}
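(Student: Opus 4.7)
The plan is to establish uniform pointwise two-sided control of $g(t)(v,v)$ for each tangent vector $v$, and then deduce uniform convergence by a Cauchy argument on the unit sphere bundle of $g(0)$.

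For the equivalence of metrics, fix $x\in M$ and a nonzero $v\in T_xM$, and set $\ell(t)=\log g(t)(v,v)$, which is smooth on $[0,T)$ since each $g(t)$ is a metric. Differentiating yields
\[\ell'(t)=\frac{(\partial_t g)(v,v)}{g(t)(v,v)}.\]
Working in a $g(t)$-orthonormal basis at $x$, the symmetric bilinear form $\partial_t g$ is represented by a symmetric matrix $H$ whose Frobenius norm equals $|\partial_t g|_{g(t)}$, and the standard comparison $|v^THv|\leq \|H\|_{\mathrm{op}}\|v\|^2\leq\|H\|_F\|v\|^2$ yields $|(\partial_t g)(v,v)|\leq|\partial_t g|_{g(t)}\,g(t)(v,v)$, hence $|\ell'(t)|\leq|\partial_t g|_{g(t)}$. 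Integrating from $0$ to $t$ and applying the hypothesis gives $|\ell(t)-\ell(0)|\leq C$, which rearranges to $e^{-C}g(0)(v,v)\leq g(t)(v,v)\leq e^C g(0)(v,v)$. Since $v$ was arbitrary and the inequality $g(t)-e^{-C}g(0)\geq 0$ (resp.\ $e^Cg(0)-g(t)\geq 0$) as a quadratic form is the definition of positive semidefiniteness as a bilinear form, we conclude $e^{-C}g(x,0)\leq g(x,t)\leq e^Cg(x,0)$ uniformly in $x$ and $t\in[0,T)$.

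For the uniform convergence claim I would apply the Cauchy criterion. For $v\in T_xM$ with $g(0)(v,v)=1$ and $0\leq t_1<t_2<T$, the fundamental theorem of calculus combined with the bound just proved gives
\[|g(t_2)(v,v)-g(t_1)(v,v)|\leq\int_{t_1}^{t_2}|(\partial_t g)(v,v)|\,dt\leq e^C\int_{t_1}^{t_2}|\partial_t g|_{g(t)}\,dt.\]
Since $\int_0^T|\partial_t g|_{g(t)}\,dt\leq C<\infty$, absolute continuity of the Lebesgue integral forces the right-hand side to $0$ as $t_1,t_2\to T$, uniformly in $x$ and in $v$ on the unit $g(0)$-sphere bundle. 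Because $M$ is closed, this sphere bundle is compact, so $g(t)$ is uniformly Cauchy there and converges uniformly to a continuous quadratic form; this determines a continuous symmetric bilinear form $g(T)$ on $TM$ by polarization. The pointwise bounds pass to this limit to yield $e^{-C}g(x,0)\leq g(x,T)\leq e^Cg(x,0)$, so $g(T)$ is positive definite and hence a continuous Riemannian metric.

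No serious obstacle is anticipated; the only mildly delicate step is the operator-versus-Frobenius comparison on $\partial_t g$ used in the first paragraph, which is a standard linear-algebra fact for symmetric matrices. Everything else is routine bookkeeping with the fundamental theorem of calculus and absolute continuity.
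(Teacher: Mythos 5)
The paper itself offers no proof of this lemma; it is quoted verbatim from Chow--Knopf \cite{ChowKnopfRicciFlowIntro}, so there is no in-paper argument to compare against. Your first paragraph is correct and complete: the pointwise inequality $|(\partial_t g)(v,v)|\le|\partial_t g|_{g(t)}\,g(t)(v,v)$ obtained from the orthonormal-frame/Frobenius comparison, followed by integration of $\tfrac{d}{dt}\log g(t)(v,v)$, is exactly the standard route to the two-sided bounds $e^{-C}g(0)\le g(t)\le e^{C}g(0)$.

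The gap is the uniformity claim in your second paragraph. Absolute continuity of the Lebesgue integral gives, for each \emph{fixed} $x$, that $\int_{t_1}^{t_2}|\partial_t g(x,t)|_{g(t)}\,dt\to 0$ as $t_1,t_2\uparrow T$; it does not give this uniformly in $x$. The hypothesis only bounds the total integral at each point by $C$; the partial integrals $\Phi_\tau(x)=\int_0^\tau|\partial_t g|\,dt$ are continuous and increase pointwise to $\Phi(x)=\int_0^T|\partial_t g|\,dt$, and by Dini the convergence (equivalently, the uniform vanishing of the tails) holds only if $\Phi$ is continuous, which the hypothesis does not guarantee. In fact the uniformity can genuinely fail: on $S^1$ take $g=e^{u(\theta,t)}\,d\theta^2$ with $u(\theta,t)=\rho\bigl((t-a(\theta))/(1-a(\theta))\bigr)$, where $\rho$ is a smooth nondecreasing cutoff vanishing for $s\le 0$ and equal to $1$ for $s\ge 1$, and $a$ is smooth with $a<1$ except at one point where $a=1$; then $|\partial_t g|_{g}=|\partial_t u|$ and $\int_0^1|\partial_t u|\,dt\le 1$ for every $\theta$, yet the limit as $t\uparrow 1$ is discontinuous and the convergence is not uniform. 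So your Cauchy argument delivers pointwise convergence to a limit bilinear form satisfying the stated bounds, but not, as written, uniform convergence or continuity of $g(T)$; some uniform control of the tails must be added. In the only place the paper invokes the lemma (Lemma \ref{lem: uniform convergence of metrics along AOF on [0,T)}), one has $T<\infty$ and $|\partial_t g|_{g(t)}\le K$ on all of $M\times[0,T)$, so $\int_{t_1}^{t_2}|\partial_t g|\,dt\le K(t_2-t_1)$ uniformly in $x$ and the rest of your argument then goes through verbatim.
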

\begin{lemma} \label{lem: uniform convergence of metrics along AOF on [0,T)}
 Let $M$ be a compact manifold and let $(M,g(t))$ be a solution to AOF on $[0,T)$ such that
\begin{equation*}
\sup_{t\in[0,T)}\|\Rm\|_{C^0(g(t))}=K<\infty.
\end{equation*}
Then $g(t)$ converges uniformly as $t\uparrow T$ to a continuous metric $g(T)$ that is uniformly equivalent to $g(t)$ for every $t\in[0,T]$. 
\end{lemma}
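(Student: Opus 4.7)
The plan is to reduce the lemma to Lemma \ref{lem: uniform convergence of metrics satisfying differential inequality on [0,T)} by establishing a uniform bound
\begin{equation*}
\sup_{M\times[0,T)}|\partial_t g|_{g(t)}<\infty,
\end{equation*}
which, together with finiteness of $T$, furnishes the required $L^1(dt)$ bound on $|\partial_t g(x,t)|_{g(t)}$ along each trajectory.

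By Proposition \ref{prop: Obhat expanded}, $\partial_t g=\Obhat$ is a linear combination of $\Delta^{n/2-1}\Rc$, $\Delta^{n/2-2}\nabla^2R$, and the polynomial terms $P_j^{n-2j}(\Rm)$ with $j\geq 2$, so $|\partial_t g|_{g(t)}$ is controlled pointwise by $\max_{0\leq k\leq n-2}|\nabla^k\Rm|_{g(t)}$ together with polynomial combinations of these quantities. For times bounded away from zero I would invoke Theorem \ref{thm:C^m smoothing estimate along AOF} with $m=n-2$ (and smaller $m$ for the lower order factors in the $P_j^{n-2j}(\Rm)$): the hypothesis $|\Rm|_{g(t)}\leq K$ on $M\times[0,T)$ gives
\begin{equation*}
\sup_M|\nabla^{n-2}\Rm|_{g(t)}\leq C\bigl(K+t^{-2/n}\bigr)^{n/2},
\end{equation*}
and analogous bounds for smaller $m$. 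Fixing any $t^{\ast}\in(0,T)$, this yields a uniform bound $|\partial_t g|_{g(t)}\leq C_1=C_1(K,n,t^{\ast})$ on $M\times[t^{\ast},T)$.

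For the remaining interval $[0,t^{\ast}]$ the smoothing estimate alone is insufficient, since $(K+t^{-2/n})^{n/2}$ is comparable to $t^{-1}$ near zero and hence just fails to be integrable there. I would instead exploit the smoothness of the initial metric $h$: since $M$ is compact, $|\nabla^k\Rm|_{g(0)}$ is bounded for every $k$, and the short time existence Theorem \ref{thm: short time existence} produces a smooth solution on $M\times[0,t^{\ast}]$ (after possibly shrinking $t^{\ast}$). By compactness of $M\times[0,t^{\ast}]$ and continuity of $(x,t)\mapsto\partial_t g(x,t)$, we obtain a bound $|\partial_t g|_{g(t)}\leq C_0$ depending on $h$ and $t^{\ast}$. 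Combining the two regimes yields $|\partial_t g|_{g(t)}\leq\max(C_0,C_1)$ throughout $M\times[0,T)$, so $\int_0^T|\partial_t g(x,t)|_{g(t)}\,dt\leq T\max(C_0,C_1)<\infty$ for every $x\in M$.

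Applying Lemma \ref{lem: uniform convergence of metrics satisfying differential inequality on [0,T)} with this constant then produces a continuous limit $g(T)$ and the two-sided equivalence $e^{-C}g(0)\leq g(t)\leq e^{C}g(0)$ for $t\in[0,T]$; transitivity of the resulting relation shows $g(t)$ is uniformly equivalent to $g(T)$ for every $t\in[0,T]$. The main obstacle is the borderline non-integrability of the $L^2$-smoothing bound near $t=0$, where the leading $t^{-1}$ term prevents a single application of Theorem \ref{thm:C^m smoothing estimate along AOF} from closing the argument; handling the initial layer by invoking the smoothness of $h$ and short time existence, rather than by sharpening the smoothing estimate, is the key step.
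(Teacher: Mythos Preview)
Your proof is correct and follows essentially the same approach as the paper: both express $\partial_t g=\Obhat$ via Proposition \ref{prop: Obhat expanded} in terms of $\nabla^k\Rm$ for $k\leq n-2$, split the time interval (the paper uses $T/2$, you use $t^\ast$), apply Theorem \ref{thm:C^m smoothing estimate along AOF} on the later piece and compactness of the smooth solution on the earlier piece, then invoke Lemma \ref{lem: uniform convergence of metrics satisfying differential inequality on [0,T)}. Your invocation of the short time existence theorem is unnecessary---the solution is already assumed smooth on $[0,T)$, so compactness of $M\times[0,t^\ast]$ alone gives the bound there---but this does not affect correctness.
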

\begin{proof}
 Since Proposition \ref{prop: Obhat expanded} states that
 \begin{equation*}
  \frac{\dif g}{\dif t}
  =\frac{(-1)^{\frac{n}{2}}}{n-2}\Delta^{\frac{n}{2}-1}\Rc+\frac{(-1)^{\frac{n}{2}-1}}{2(n-1)}\Delta^{\frac{n}{2}-2}\nabla^2 R+\sum_{j=2}^{n/2}P_j^{n-2j}(\Rm),
 \end{equation*}
 in order to apply the preceding Lemma \ref{lem: uniform convergence of metrics satisfying differential inequality on [0,T)} it suffices to show that $|\nabla^k\Rm|_{g(t)}$ is bounded on $M\times[0,T)$ for al $k$ satisfying $0\leq k\leq n-2$. Using the smoothng estimate provided in Theorem \ref{thm:C^m smoothing estimate along AOF}, we get
\begin{equation*}
\max_{0\leq k\leq n-2}\sup_{M\x[0,T)}|\nabla^k\Rm|_{g(t)}\leq\max_{0\leq k\leq n-2}\sup_{M\x[0,\frac{T}{2}]}|\nabla^k\Rm|_{g(t)}+C\big(\widetilde{K}+(\tfrac{T}{2})^{-\frac{2}{n}}\big)^{\frac{n}{2}},
\end{equation*}
where $C=C(n)$ and $\widetilde{K}=\max\{K,1\}$.

So $\frac{\dif g}{\dif t}$ is bounded on $M\times[0,T)$ and the metrics $g(t)$ converge uniformly as $t\uparrow T$ to a continuous metric $g(T)$ uniformly equivalent to each $g(t)$.
\end{proof}

Since $M$ is a compact manifold, we can obtain bounds on $|\bar{\nabla}^k g(t)|_{\bar{g}}$ by taking the maximum of bounds taken on finitely many coordinate patches. On such a coordinate patch, we can assume that the fixed metric is just the Euclidean one. Thus we will only need to bound the partial derivatives of $g$ and $\Obhat$.

\begin{lemma} \label{lem: partial derivatives of g and Obhat bdd wrt covariant derivatives of Rm}
  Let $M$ be a compact manifold and let $(M,g(t))$ be a solution to AOF on $[0,T)$. Fix $m\geq 0$. Suppose that for $0\leq i\leq m+n-1$, there exist constants $C_i$ such that $|\nabla_{g(t)}^i\Rm(g(t))|_{g(t)}<C_i$ on $M\x[0,T)$. Then for all $t\in[0,T)$,
  \begin{align*} 
   |\partial^m g(t)|_{g(t)} & <\widetilde{C}_1(g(0),C_0,\dots,C_{m+n-1}) \\
   |\partial^m \Obhat(t)|_{g(t)} & <\widetilde{C}_2(g(0),C_0,\dots,C_{m+n-1}).
  \end{align*}
\end{lemma}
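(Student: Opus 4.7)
We prove both bounds simultaneously by induction on $m$. By the compactness of $M$, it suffices to work in a single coordinate chart, and within such a chart we may take the background metric $\bar g$ to be Euclidean, so that the background connection $\bar\nabla$ coincides with the coordinate partial $\partial$ and $|\cdot|_{\bar g}$ is the componentwise Euclidean norm. The preceding lemma on uniform convergence of metrics along AOF ensures that $g(t)$ is uniformly equivalent to $g(0)$, and hence to $\bar g$ in the chart, so norms $|\cdot|_{g(t)}$ and $|\cdot|_{\bar g}$ are interchangeable up to constants depending only on $g(0)$.

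The base case $m=0$ is immediate: $|g|_{g(t)}$ and $|g^{-1}|_{g(t)}$ are bounded, while Proposition~\ref{prop: Obhat expanded} exhibits $\Obhat$ as a polynomial in $\nabla^i\Rm$ (for $i\le n-2$) contracted against factors of $g^{\pm 1}$, so $|\Obhat|_{g(t)}$ is bounded by a polynomial in $C_0,\ldots,C_{n-1}$.

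For the inductive step at $m\ge 1$, assume the claim at all lower orders. Since $\partial$ commutes with $\partial_t$, we have $\partial_t(\partial^m g)=\partial^m\Obhat$, so it suffices to establish the key inequality
\begin{equation*}
|\partial^m\Obhat|_{\bar g}\le K_1|\partial^m g|_{\bar g}+K_2,
\end{equation*}
with $K_1,K_2$ depending on $g(0)$ and on $C_0,\ldots,C_{m+n-1}$. This is obtained by iterating the schematic relation $\partial T = \nabla T - A*T$ (valid for any tensor $T$), where $A=\Gamma(g)=g^{-1}*\partial g$ is the Christoffel tensor in the flat chart; after $m$ iterations, $\partial^m\Obhat$ becomes a polynomial in $\nabla^j\Obhat$ for $j\le m$, in $\partial^l g$ for $l\le m$, and in $g^{-1}$. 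The tensorial leading term $\nabla^m\Obhat$ involves at most $\nabla^{m+n-2}\Rm$ by Proposition~\ref{prop: Obhat expanded} and so has $g(t)$-norm controlled by the hypothesis, while every other term contains a factor $\partial^k A$ for some $k\le m-1$. Since the top-order piece of $\partial^{m-1}A$ is $g^{-1}*\partial^m g$ plus lower order, the derivative $\partial^m g$ can appear only linearly and always paired with a bounded tensor built from $\Obhat$, $\nabla^j\Obhat$, and lower-order $\partial^l g$ (with $l\le m-1$), all controlled by induction and the hypothesis. Inserting this estimate into $\partial_t|\partial^m g|_{\bar g}^2\le 2|\partial^m g|_{\bar g}|\partial^m\Obhat|_{\bar g}$ and applying Gronwall's inequality on $[0,T)$ bounds $|\partial^m g|_{\bar g}$; feeding this back into the inequality then yields the desired bound on $|\partial^m\Obhat|_{\bar g}$.

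Taking the maximum over a finite chart cover of $M$ and converting $\bar g$-norms back to $g(t)$-norms via uniform equivalence completes the proof. The main technical difficulty is the careful expansion of $\partial^m\Obhat$ and the verification that $\partial^m g$ enters only linearly with coefficient controlled by quantities already bounded at earlier inductive stages; once this is secured the Gronwall step is routine.
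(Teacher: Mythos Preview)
Your proposal is correct and follows the same inductive scheme as the paper, but the inductive step is organized differently. The paper exploits the tensorial identity $\partial_t\Gamma=\nabla\partial_t g=\nabla\Obhat$ to first bound $\partial^{m-1}\Gamma$ outright: since
\[
\partial_t\partial^{m-1}\Gamma=\partial^{m-1}\nabla\Obhat=\nabla^m\Obhat+\sum_{j=1}^{m-1}\nabla^j\Obhat*\mathsf{P}^{m-j}(\Gamma)
\]
involves only $\nabla^j\Obhat$ (controlled by hypothesis) and $\partial^l\Gamma$ with $l\le m-2$ (controlled by induction), one integrates in time to get a direct bound on $\partial^{m-1}\Gamma$. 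Plugging this into the expansion $\partial^m\Obhat=\nabla^m\Obhat+\sum_{i=0}^{m-1}\partial^i\Obhat*\mathsf{P}^{m-i}(\Gamma)$ then bounds $\partial^m\Obhat$ \emph{independently} of $\partial^m g$, and $\partial^m g$ is bounded by one more time integration---no Gronwall is needed. You instead accept the linear dependence $|\partial^m\Obhat|\le K_1|\partial^m g|+K_2$ and close via Gronwall. Both arguments are valid; the paper's decoupling is slightly cleaner and yields growth linear rather than exponential in $T$, but since $T<\infty$ in the application this difference is immaterial.
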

\begin{proof}
 We prove this by induction. First we bound $\dif g$. We have
 \begin{equation*}
  \dif_t\dif g=\dif\dif_t g=(\nabla+\Gamma)\ast\dif_t g=\nabla\Obhat+\Gamma\ast\Obhat.
 \end{equation*}
 From the definition of $\Obhat$, we obtain the bound $\nabla\Obhat<C(C_0,\dots,C_{n-1})$. Then, since $\dif_t\Gamma=\nabla\dif_t g=\nabla\Obhat$, $\Gamma$ can be bounded in terms of the inital metric and $\nabla\Obhat$ after integrating. So $\dif\Obhat=\dif_t\dif g$ is uniformly bounded by $C(g(0),C_0,\dots,C_{n-1})$, and so is $\dif g$ after integrating.
 
 Assume that
 \begin{align*}
  |\partial^i g| & <C(g(0),C_0,\dots,C_{i+n-1})\textrm{ for }0\leq i\leq m-1, \\
  |\partial^i\Obhat| & <C(g(0),C_0,\dots,C_{i+n-1})\textrm{ for }0\leq i\leq m-1, \\
  |\partial^i\Gamma| & <C(g(0),C_0,\dots,C_{i+n-1})\textrm{ for }0\leq i\leq m-2.
 \end{align*}
 We wish to bound $\dif^m g$. It suffices to bound $\dif^m\Obhat$ since $\dif_t\dif^m g=\dif^m\dif_t g=\dif^m\Obhat$. We can express $\dif^m\Obhat$ as
 \begin{equation} \label{eq: partial(Obhat) in terms of nabla(Obhat) and Gamma}
  \dif^m\Obhat=\nabla^m\Obhat+\sum_{i=0}^{m-1}\dif^i\Obhat\ast\mathsf{P}^{m-i}(\Gamma),
 \end{equation}
 where $\mathsf{P}^k(A)$ is defined to be some polynomial in $A$ such that for each term the sum of the number of partial derivatives of $g$ in each factor is at most $k$. The following is a proof by induction. First, the equation holds when $m=1$: $\dif\Obhat=(\nabla+\Gamma)\ast\Obhat=\nabla\Obhat+\Gamma\ast\Obhat$.
 Assume the equation \eqref{eq: partial(Obhat) in terms of nabla(Obhat) and Gamma} holds for $0\leq i\leq m$. Then
 \begin{align*}
  \nabla^{m+1}\Obhat &= (\dif+\Gamma)\nabla^m\Obhat\\
  &= \partial^{m+1}\Obhat+\partial^m\Obhat*\mathsf{P}^1(\Gamma)+\sum_{i=0}^{m-1}\left[\dif^{i+1}\Obhat\ast\mathsf{P}^{m-i}(\Gamma)+\dif^i\Obhat\ast\mathsf{P}^{m+1-i}(\Gamma)+\Gamma\ast\dif^i\Obhat\ast\mathsf{P}^{m-i}(\Gamma)\right]\\
  &= \partial^{m+1}\Obhat+\partial^m\Obhat*\mathsf{P}^1(\Gamma)+\sum_{i=0}^{m-1}\left[\dif^{i+1}\Obhat\ast\mathsf{P}^{m-i}(\Gamma)+\dif^i\Obhat\ast\mathsf{P}^{m+1-i}(\Gamma)\right]\\
  &= \partial^{m+1}\Obhat+\partial^m\Obhat*\mathsf{P}^1(\Gamma)+\sum_{i=1}^m\dif^i\Obhat\ast\mathsf{P}^{m+1-i}(\Gamma)+\sum_{i=0}^{m-1}\dif^i\Obhat\ast\mathsf{P}^{m+1-i}(\Gamma)\\
  &= \partial^{m+1}\Obhat+\sum_{i=0}^m\dif^i\Obhat\ast\mathsf{P}^{m+1-i}(\Gamma).
 \end{align*}
 From the equation \eqref{eq: partial(Obhat) in terms of nabla(Obhat) and Gamma}, we see that in order to bound $\dif^m\Obhat$, we only need to bound $\dif^{m-1}\Gamma$. We have
 \begin{equation} \label{eq: partial_t partial^(m-1) Gamma}
  \dif_t\dif^{m-1}\Gamma=\dif^{m-1}\dif_t\Gamma=\sum_{i=0}^{m-1}\dif^i\nabla\Obhat.
 \end{equation}
 We bound $\dif^i\nabla\Obhat$ via the equation
 \begin{equation} \label{eq: partial nabla(Obhat)}
  \dif^i\nabla\Obhat=\nabla^{i+1}\Obhat+\sum_{j=1}^i\nabla^j\Obhat\ast\mathsf{P}^{i-j+1}(\Gamma).
 \end{equation}
 In order to verify this via induction, we have that for $i=1$, $\dif\nabla\Obhat=\nabla^2\Obhat+\Gamma\ast\nabla\Obhat.$ If the equation holds for the $i$th partial derivative,
 \begin{align*}
  \dif^{i+1}\nabla\Obhat &= (\nabla+\Gamma)\dif^i\nabla\Obhat\\
  &= \nabla^{i+2}\Obhat+\sum_{j=1}^i\left[\nabla^{j+1}\Obhat\ast\mathsf{P}^{i-j+1}(\Gamma)+\nabla^j\Obhat*\mathsf{P}^{i-j+2}(\Gamma)\right]+\Gamma\ast\left[\nabla^{i+1}\Obhat+\sum_{j=1}^i\nabla^j\Obhat\ast\mathsf{P}^{i-j+1}(\Gamma)\right]\\
  &= \nabla^{i+2}\Obhat+\sum_{j=2}^{i+1}\nabla^j\Obhat\ast\mathsf{P}^{i-j+2}(\Gamma)+\sum_{j=1}^i\nabla^j\Obhat\ast\mathsf{P}^{i-j+2}(\Gamma)+\nabla^{i+1}\Obhat\ast\mathsf{P}^1(\Gamma)+\sum_{j=1}^i\nabla^j\Obhat\ast\mathsf{P}^{i-j+2}(\Gamma)\\
  &= \nabla^{i+2}\Obhat+\sum_{j=1}^{i+1}\nabla^j\Obhat\ast\mathsf{P}^{i-j+2}(\Gamma).
 \end{align*}
 If $0\leq i\leq m-1$, then the highest partial derivative of $\Gamma$ that appears in equation \eqref{eq: partial nabla(Obhat)} is of order at most $m-2$, so $\dif^i\nabla\Obhat$ is bounded in terms of covariant derivatives of $\Obhat$ and previously bounded partial derivatives of $\Gamma$. Therefore, via equation \eqref{eq: partial_t partial^(m-1) Gamma}, $\dif^{m-1}\Gamma$ and $\dif^m\Obhat$ are bounded.
\end{proof}
\begin{proof}[Proof of Theorem  \ref{thm: obstruction to long time existence}]
 Suppose that equation \eqref{eq: |Rm|_g(t) bdd on M x [0,T)} holds. By Lemma \ref{lem: uniform convergence of metrics along AOF on [0,T)}, the metrics $g(t)$ converge uniformly to a continuous metric $g(T)$ as $t\uparrow T$.  We show that $g(T)$ is $C^\infty$ on $M$. It suffices to show for each $k\in\N$ that $g(T)$ is $C^k$ on any coordinate patch since we can take a maximum over finitely many of them to show that $g(T)$ is $C^k$ on $M$. We have
 \begin{equation*}
  g(t)=g(0)+\int_0^t\Obhat(\tau)\,d\tau.
 \end{equation*}
 Taking limits as $t\uparrow T$, we get
 \begin{equation*}
  g(T)=g(0)+\int_0^T\Obhat(\tau)\,d\tau.
 \end{equation*}
 This permits us to take the $k$th partial derviative:
 \begin{equation*}
  \partial^k g(T)=\partial^kg(0)+\int_0^T\partial^k\Obhat(\tau)\,d\tau.
 \end{equation*}
 The bounds on $\dif^kg$ and $\dif^k\Obhat$ from Lemma \ref{lem: partial derivatives of g and Obhat bdd wrt covariant derivatives of Rm} therefore imply a bound on $\dif^kg(T)$. So $g(T)$ is $C^\infty$ on $M$. Furthermore, since
 \begin{equation*}
  |\dif^k g(T)-\dif^k g(t)|\leq\int_t^T|\dif^k\Obhat(\tau)|\,d\tau\leq C_k(T-t),
 \end{equation*}
 the metrics $g(t)$ converge in $C^\infty$ to $g(T)$. So $g(t)$ is a $C^\infty$ solution to AOF on $[0,T]$. Then the short time existence Theorem \ref{thm: short time existence} applied to $g(t)$ with inital metric $g(T)$ allows us to extend $g(t)$ past $T$. This contradicts the assumption that $T$ was the maximal time for the solution $(M,g(t))$.
\end{proof}

\section{Compactness of Solutions} \label{sec: compactness of solutions}
In this section, we give compactness results for AOF similar to Hamilton's compactness theorem for solutions of the Ricci flow. We first prove a proposition that states that for a sequence of metrics,  uniform bounds on the spacetime derivatives of curvature and the derivatives of the metric at one time extend to uniform bounds on the spacetime derivatives of the metric. This is used to prove the compactness Theorem \ref{thm: cptness for seq of cpt mfds} for a sequence of complete pointed solutions of AOF. We then give the proofs of Theorem \ref{thm: existence of singularity model at curvature singularity}, which allows us to obtain a singularity model from a singular solution, and Theorem \ref{thm: nonsingular limit at infinity}, which describes the behavior at time $\infty$ of a nonsingular solution.

We quote some definitions and results from Chow et. al.'s text \cite{ChowEtAlRicciFlowTechniquesApplications} on Ricci flow.
\begin{definition}
(\cite{ChowEtAlRicciFlowTechniquesApplications} Definition 3.1) Let $K\subset M$ be a compact set and let $\{g_k\}_{k\in\N}$, $g_\infty$, and $g$ be Riemannian metrics on $M$. For $p\in\{0\}\cup\N$ we say that $g_k$ \textbf{converges in $C^p$ to $g_\infty$ uniformly on $K$} if for every $\epsilon>0$ there exists $k_0=k_0(\epsilon)$ such that for $k\geq k_0$,
 \begin{equation*}
  \sup_{0\leq|\alpha|\leq p}\sup_{x\in K}|\nabla_g^\alpha(g_k-g_\infty)|_g<\epsilon.
 \end{equation*}
\end{definition}
\begin{definition}
(\cite{ChowEtAlRicciFlowTechniquesApplications} Definition 3.5) ($C^\infty$ Cheeger-Gromov convergence) A sequence $\{(M_k^n,g_k,O_k)\}_{k\in\N}$ of complete pointed Riemannian manifolds \textbf{converges} (in the Cheeger-Gromov topology) to a complete pointed Riemannian manifold $(M_\infty^n,g_\infty,O_\infty)$ if there exist
\begin{enumerate}
\item an exhaustion $\{U_k\}_{k\in\N}$ of $M_\infty$ by open sets with $O_\infty\in U_k$,
\item a sequence of diffeomorphisms $\Phi_k:U_k\to V_k:=\Phi_k(U_k)\subset M_k$ with $\Phi_k(O_\infty)=O_k$, such that $\left(U_k,\Phi_k^*\left[g_k|_{V_k}\right]\right)$ converges in $C^\infty$ to $(M_\infty,g_\infty)$ uniformly on compact sets in $M_\infty$.
\end{enumerate}
\end{definition}
\begin{definition}
(\cite{ChowEtAlRicciFlowTechniquesApplications} Definition 3.6) A sequence $\{(M_k^n,g_k(t),O_k)\}_{k\in\N}$ of one-parameter families of complete pointed Riemannian manifolds \textbf{converges} to a one-parameter family of complete pointed Riemannian manifolds $(M_\infty^n,g_\infty(t),O_\infty)$, $t\in(\alpha,\omega)$, if there exist
\begin{enumerate}
\item  an exhaustion $\{U_k\}_{k\in\N}$ by open sets with $O_\infty\in U_k$
\item a sequence of diffeomorphisms $\Phi_k:U_k\to V_k:=\Phi_k(U_k)\subset M_k$ with $\Phi_k(O_\infty)=O_k$, such that $\left(U_k\times(\alpha,\omega),\Phi_k^*\left[g_k(t)|_{V_k}\right]+dt^2\right)$ converges in $C^\infty$ to $ (M_\infty\times(\alpha,\omega),g_\infty(t)+dt^2)$ uniformly on compact subsets in $M_\infty\times(\alpha,\omega)$.
\end{enumerate}
\end{definition}
\begin{theorem} \label{thm: Cheeger Gromov cptness}
 (Cheeger-Gromov compactness theorem) (Hamilton, \cite{HamiltonCptnessPropertyForSolutionsOfTheRicciFlow} Theorem 2.3) Let $\{(M_k^n,g_k,O_k)\}_{k\in\N}$ be a sequence of complete pointed Riemannian manifolds that satisfy
 \begin{equation*}
  |\nabla_k^p\Rm_k|_k\leq C_p\textrm{ on }M_k
 \end{equation*}
 for all $p\geq 0$ and $k$ where $C_p<\infty$ is a sequence of constants independent of $k$ and
 \begin{equation*}
  \mathrm{inj}_{g_k}(O_k)\geq\iota_0
 \end{equation*}
 for some constant $\iota_0>0$. Then there exists a subsequence $\{j_k\}_{k\in\N}$ such that $\{M_{j_k},g_{j_k},O_{j_k})\}_{k\in\N}$ converges to a complete pointed Riemannian manifold $(M_\infty^n,g_\infty,O_\infty)$ as $k\to\infty$.
\end{theorem}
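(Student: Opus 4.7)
The plan is to adapt the classical proof of the Cheeger-Gromov compactness theorem, which proceeds by constructing the limit $(M_\infty, g_\infty, O_\infty)$ as a direct limit of exponential coordinate charts on the $M_k$ and applying the Arzel\`a-Ascoli theorem on each chart. The argument has three stages: (i) establish $C^\infty$ compactness of the pulled back metrics in exponential coordinates centered at the base points $O_k$, giving a limit metric on a Euclidean ball of radius $\iota_0$; (ii) propagate this compactness to arbitrary distances from $O_k$ by iterating exponential coordinate charts at points where the injectivity radius can be controlled; and (iii) glue the local limits into a single smooth complete pointed Riemannian manifold.

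First I would fix a linear isometry $T_{O_k} M_k \cong \R^n$ for each $k$ and consider the exponential maps $E_k = \exp^{g_k}_{O_k} \colon B_{\R^n}(0, \iota_0) \to M_k$, which are diffeomorphisms onto their images by hypothesis. The pulled back metrics $h_k = E_k^* g_k$ admit uniform $C^p$ bounds on $B_{\R^n}(0, \iota_0 - \varepsilon)$ for every $p$ and every $\varepsilon > 0$: the radial lines from the origin are $h_k$-geodesics, and Jacobi field estimates along them convert the bounds $|\nabla^p \Rm|_{g_k} \leq C_p$ into bounds $|\partial^p h_k|_{\mathrm{Eucl}} \leq C_p'$, while Rauch comparison applied with the bound $|\Rm|_{g_k} \leq C_0$ shows that the $h_k$ are uniformly equivalent to the Euclidean metric on $B_{\R^n}(0, \iota_0/2)$. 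Applying Arzel\`a-Ascoli componentwise to $h_k$, a subsequence $h_{k_j}$ converges in $C^\infty_{\mathrm{loc}}$ on $B_{\R^n}(0, \iota_0)$ to a smooth Riemannian metric $h_\infty$; this is the germ of $(M_\infty, g_\infty)$ about $O_\infty$.

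To extend the limit, fix a point $p$ in the open chart at distance less than $\iota_0/2$ from the origin. The corresponding points $p_{k_j} = E_{k_j}(p) \in M_{k_j}$ lie within $g_{k_j}$-distance $\iota_0/2$ of $O_{k_j}$; Bishop-Gromov volume comparison (using the two sided curvature bound) gives uniform volume lower bounds for small balls around $p_{k_j}$, and Cheeger's injectivity radius lemma then yields a uniform $\mathrm{inj}_{g_{k_j}}(p_{k_j}) \geq \iota_1 > 0$. I would then form exponential charts $E_{k_j}^{(p)} \colon B_{\R^n}(0, \iota_1) \to M_{k_j}$ centered at $p_{k_j}$, identifying $T_{p_{k_j}} M_{k_j}$ with $\R^n$ via parallel transport along the radial $g_{k_j}$-geodesic of the initial frame at $O_{k_j}$, and repeat the Arzel\`a-Ascoli argument on each new chart. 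Choosing a countable family of such base points and taking a diagonal subsequence, still denoted $\{j_k\}$, produces an exhaustion $\{U_k\}$ of a smooth manifold $M_\infty$ by precompact open sets containing $O_\infty$, together with diffeomorphisms $\Phi_k \colon U_k \to V_k \subset M_{j_k}$ with $\Phi_k(O_\infty) = O_{j_k}$ and $\Phi_k^* g_{j_k} \to g_\infty$ in $C^\infty_{\mathrm{loc}}$.

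The hard part is realizing $M_\infty$ as a single smooth manifold and verifying completeness. I would build $M_\infty$ abstractly as a quotient of the disjoint union of the selected chart balls $B_{\R^n}(0, \iota_1)$ by the equivalence relation induced by the limit transition maps, which are $C^\infty$ limits of compositions $E_{k_j}^{(q)\,-1} \circ E_{k_j}^{(p)}$ (convergent because both the $h_k$ and the $g_k$-parallel transported frames converge). Hausdorffness of the quotient follows from the $C^0$ convergence of the metrics and the uniform equivalence with the Euclidean metric on each chart, while completeness of $g_\infty$ reduces to showing that for every $R > 0$ the closed $g_\infty$-ball of radius $R$ around $O_\infty$ is eventually covered by finitely many charts, which uses completeness of each $g_{k_j}$. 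The main obstacle throughout is the uniform application of Cheeger's lemma at each step of the iteration, which in turn relies on the two sided curvature bound and Bishop-Gromov comparison propagated along minimal $g_{k_j}$-geodesics from $O_{k_j}$.
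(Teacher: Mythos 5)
The paper does not actually prove this statement: Theorem \ref{thm: Cheeger Gromov cptness} is imported verbatim from Hamilton (\cite{HamiltonCptnessPropertyForSolutionsOfTheRicciFlow}, Theorem 2.3) and used as a black box in Section \ref{sec: compactness of solutions}, so there is no in-paper argument to compare yours against. Your sketch follows the standard Cheeger--Gromov--Hamilton route: normal coordinates at the basepoints, conversion of the bounds $|\nabla^p\Rm|\leq C_p$ plus the injectivity radius bound into uniform $C^\infty$ control of the metric coefficients, propagation of the injectivity radius lower bound to points at bounded distance via volume comparison and the Cheeger--Gromov--Taylor estimate, a diagonal Arzel\`a--Ascoli extraction over a countable net of chart centers, and an abstract gluing of the chart balls along limit transition maps. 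That is the correct architecture. The two places where your outline is thinnest are exactly where the real work lies in Hamilton's proof: the $C^\infty$ convergence of the transition maps $E_{k_j}^{(q)\,-1}\circ E_{k_j}^{(p)}$ requires uniform bounds on all derivatives of these local isometries, which must be extracted from the metric bounds on both charts (convergence of the parallel-transported frames alone is not enough); and the finiteness of the number of charts meeting any fixed $g_\infty$-ball, which underlies both Hausdorffness of the quotient and completeness of $g_\infty$, needs a volume-counting argument that you assert rather than supply. As a sketch of a theorem the paper itself only quotes, this is a faithful outline of the known proof.
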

The following proposition allows us to extend bounds on the derivatives of a sequence of metrics at one time to bounds that are uniform over an interval.
\begin{proposition} \label{prop: extend convergence of metrics over time interval}
 Let $(M,g)$ be a Riemannian manifold and $L$ be a compact subset of $M$. Let $\{g_i\}_{i\in\N}$ be a collection of Riemannian metrics that are solutions of AOF on neighborhoods containing $L\times[\beta,\psi]$. Let $t_0\in[\beta,\psi]$ and fix $k\geq n-2$. Let unmarked objects such as $\nabla$ and $|\cdot|$ be taken with respect to $g$, and let objects such as $\nabla_k$ and $|\cdot|_k$ be taken with respect to $g_k$.  Suppose that:
 \begin{enumerate}
  \item The metrics $g_i(t_0)$ are uniformly equivalent to $g$ for every $i\in\N$: for some $B_0>0$, $B_0^{-1}g\leq g_i(t_0)\leq B_0g.$
  \item For each $1\leq p\leq k$, there exists a uniform bound $C_p$ on $L$ independent of $i$ such that $|\nabla^p g_i(t_0)|\leq C_p$.
  \item For each $0\leq p+q\leq k+n-2$, there exists a uniform bound $C'_{p,q}$ on $L\times[\beta,\psi]$ independent of $i$ such that $|\partial_t^q\nabla_{g_i}^p\Rm(g_i)|_{g_i}\leq C'_{p,q}$.
 \end{enumerate}
 Then:
 \begin{enumerate}
  \item The metrics $g_k(t)$ are uniformly equivalent to $g$ for every $i\in\N$ and $t\in[\beta,\phi]$: for some $B=B(t,t_0)>0$, $B^{-1}g\leq g_i(t)\leq Bg$.
  \item For every $p,q$ satisfying $0\leq p+q\leq k$, there is a uniform bound $\widetilde{C}_{p,q}$ on $L\times[\beta,\psi]$ independent of $i$ such that $|\dif_t^q\nabla^p g_i(t)|\leq\widetilde{C}_{p,q}$.
 \end{enumerate}
\end{proposition}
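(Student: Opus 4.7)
The plan is to prove the two conclusions in order, first obtaining the uniform equivalence and then establishing the higher derivative bounds by induction on $N = p+q$.

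For conclusion (1), I would invoke Proposition \ref{prop: Obhat expanded} to express $\Obhat(g_i) = \partial_t g_i$ as a universal polynomial in $g_i$, $g_i^{-1}$, and $\nabla_{g_i}^j\Rm(g_i)$ for $0 \leq j \leq n-2$. Since $n-2 \leq k+n-2$, hypothesis (3) with $q=0$ supplies uniform $g_i$-bounds on every factor, and hence a uniform bound $|\partial_t g_i|_{g_i} \leq C$ on $L \times [\beta,\psi]$ independent of $i$. Lemma \ref{lem: uniform convergence of metrics satisfying differential inequality on [0,T)} (applied with basepoint $t_0$ rather than $0$) then yields $e^{-C|t-t_0|} g_i(t_0) \leq g_i(t) \leq e^{C|t-t_0|} g_i(t_0)$, which combined with hypothesis (1) proves conclusion (1) with $B(t,t_0) = B_0\,e^{C|t-t_0|}$.

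For conclusion (2), the crucial observation is that the background connection $\nabla$ is time-independent, so $\partial_t$ and $\nabla$ commute and $\partial_t^q \nabla^p g_i = \nabla^p \partial_t^q g_i$. I would prove $|\partial_t^q \nabla^p g_i| \leq \widetilde{C}_{p,q}$ by induction on $N = p+q \leq k$, with the base case $N = 0$ being conclusion (1). For the inductive step, I split into two regimes. When $q \geq 1$, write $\partial_t^q \nabla^p g_i = \nabla^p \partial_t^{q-1} \Obhat(g_i)$ and expand via the Leibniz rule, using the explicit schematic form of $\Obhat$ from Proposition \ref{prop: Obhat expanded}, Proposition \ref{prop: commutator of partial_t and nabla^k} for $\partial_t$--$\nabla_{g_i}$ commutators, and the tensor identity $\nabla - \nabla_{g_i} = g_i^{-1} * \nabla g_i$ to convert $\nabla_{g_i}$-derivatives to $\nabla$-derivatives. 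The result is a polynomial in the quantities
\begin{equation*}
 g_i^{\pm 1}, \quad \partial_t^b \nabla^a g_i \text{ with } a+b < N, \quad \partial_t^b \nabla_{g_i}^a \Rm(g_i) \text{ with } a \leq p+n-2,\ b \leq q-1.
\end{equation*}
The first two classes are controlled by the inductive hypothesis; the third class satisfies $a+b \leq (p+n-2)+(q-1) = N+n-3 \leq k+n-2$ and is $g_i$-bounded by hypothesis (3), which conclusion (1) converts to $g$-bounds. When $q=0$, the identity $\partial_t \nabla^p g_i = \nabla^p \Obhat(g_i)$ is a first-order ODE in $t$ whose right-hand side, by the same expansion, depends on $\nabla^p g_i$ linearly with inductively controlled coefficients, giving a bound of the form $|\partial_t \nabla^p g_i| \leq A + B|\nabla^p g_i|$. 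Gronwall's inequality, together with the initial value bound from hypothesis (2), then propagates the estimate to all of $[\beta,\psi]$.

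The principal difficulty will be the careful bookkeeping needed to confirm that only the declared orders of derivatives appear in the expansion of $\nabla^p \partial_t^{q-1} \Obhat$, and in particular that the top-order factor enters linearly so that Gronwall applies in the $q=0$ case. The commutator identities of Propositions \ref{prop: grad and Laplacian commutator} and \ref{prop: commutator of partial_t and nabla^k}, combined with the explicit form of $\Obhat$ from Proposition \ref{prop: Obhat expanded}, supply precisely the order counts needed to close the induction.
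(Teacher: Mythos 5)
Your proposal is correct and follows essentially the same route as the paper: bound $|\partial_t g_i|_{g_i}$ via the schematic form of $\Obhat$ and integrate to get uniform equivalence, then bound $|\partial_t^q\nabla^p g_i|$ inductively by converting $\nabla$ to $\nabla_{g_i}$ (generating lower-order $\nabla^a g_i$ factors plus a linear top-order term) and closing the $q=0$ case with a Gronwall argument from the time-$t_0$ data. The only differences are organizational — you induct jointly on $p+q$ and cite Lemma \ref{lem: uniform convergence of metrics satisfying differential inequality on [0,T)} where the paper re-derives the equivalence inline and handles the base case $(1,0)$ through $\Gamma_i-\Gamma$ — and these do not change the substance.
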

\begin{lemma} \label{lem: extend convergence of metrics over time interval: uniform spacetime equivalence of metrics}
 The metrics $g_k(t)$ in the above proposition are uniformly equivalent to $g$ on $L\times [\beta,\psi]$: for all $V\in T_xL$ with $x\in L$,
 \begin{equation*}
  B(t,t_0)^{-1}g(V,V)\leq g_k(t)(V,V)\leq B(t,t_0)g(V,V).
 \end{equation*}
\end{lemma}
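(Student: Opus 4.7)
The plan is to reduce this Lemma to a pointwise Gr\"onwall argument driven by the flow equation $\partial_t g_i = \widehat{\Ob}(g_i)$. The key observation is that hypothesis (3) of the Proposition, together with the structural formula of Proposition \ref{prop: Obhat expanded}, immediately yields a uniform bound on $|\widehat{\Ob}(g_i)|_{g_i}$ on $L\times[\beta,\psi]$. Once that bound is in hand, the length of any fixed tangent vector $V$ measured in $g_i(t)$ satisfies a linear ODE inequality in $t$, and integration from $t_0$ combined with hypothesis (1) yields the desired two-sided comparison with $g$.

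First I would estimate $|\widehat{\Ob}(g_i)|_{g_i}$. By Proposition \ref{prop: Obhat expanded},
\begin{equation*}
\widehat{\Ob}(g_i) \;=\; \frac{(-1)^{n/2}}{n-2}\Delta_{g_i}^{n/2-1}\Rc(g_i) \;+\; \frac{(-1)^{n/2-1}}{2(n-1)}\Delta_{g_i}^{n/2-2}\nabla_{g_i}^2 R(g_i)\;+\;\sum_{j=2}^{n/2}P_j^{n-2j}(\Rm(g_i)),
\end{equation*}
where every term is a universal contraction of at most $n-2$ covariant derivatives of $\Rm(g_i)$, built using $g_i$ itself. Since $k\geq n-2$, hypothesis (3) with $q=0$ and $0\leq p\leq n-2$ bounds $|\nabla_{g_i}^p\Rm(g_i)|_{g_i}$ uniformly on $L\times[\beta,\psi]$ by a constant $C_0 = C_0(n,C'_{0,0},\dots,C'_{n-2,0})$, independent of $i$. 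Applying the triangle inequality and the fact that the $g_i$-norm is multiplicative on $*$-products, we obtain
\begin{equation*}
\sup_{i\in\N}\;\sup_{L\times[\beta,\psi]}|\widehat{\Ob}(g_i)|_{g_i} \;\leq\; C_1,
\end{equation*}
with $C_1 = C_1(n,C'_{0,0},\dots,C'_{n-2,0})$.

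Next I would run the pointwise Gr\"onwall argument. Fix $x\in L$ and $V\in T_xL$ with $g(V,V) \neq 0$. Since $g_i$ solves AOF,
\begin{equation*}
\partial_t\bigl[g_i(t)(V,V)\bigr] \;=\; \widehat{\Ob}(g_i)(V,V),
\end{equation*}
and by Cauchy--Schwarz for the tensor norm,
\begin{equation*}
\bigl|\widehat{\Ob}(g_i)(V,V)\bigr| \;\leq\; |\widehat{\Ob}(g_i)|_{g_i}\,|V|_{g_i}^2 \;\leq\; C_1\,g_i(t)(V,V).
\end{equation*}
Therefore $\bigl|\partial_t\log g_i(t)(V,V)\bigr|\leq C_1$ on $[\beta,\psi]$, so integrating from $t_0$ yields
\begin{equation*}
e^{-C_1|t-t_0|}\,g_i(t_0)(V,V) \;\leq\; g_i(t)(V,V) \;\leq\; e^{C_1|t-t_0|}\,g_i(t_0)(V,V).
\end{equation*}

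Finally, invoking hypothesis (1) of the Proposition, $B_0^{-1}g(V,V)\leq g_i(t_0)(V,V)\leq B_0\,g(V,V)$, and setting $B(t,t_0) \mathrel{\mathop:}= B_0\,e^{C_1|t-t_0|}$ gives $B^{-1}g(V,V)\leq g_i(t)(V,V)\leq B\,g(V,V)$, which is exactly the claim. A uniform constant on all of $L\times[\beta,\psi]$ is then $B_0\,e^{C_1(\psi-\beta)}$. The only delicate point is verifying that $C_1$ depends solely on the constants supplied by hypothesis (3) and on $n$; this is immediate from the structure of Proposition \ref{prop: Obhat expanded} since that expression is a universal natural tensor polynomial in $\Rm$ and its covariant derivatives. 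The rest of the argument is elementary ODE comparison, so I expect no substantive obstacle here, and this Lemma will be the bootstrap base for the higher-order bounds $|\partial_t^q\nabla^p g_i(t)|\leq \widetilde{C}_{p,q}$ asserted by the Proposition.
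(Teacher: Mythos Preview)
Your proof is correct and follows essentially the same approach as the paper: bound $|\widehat{\Ob}(g_i)|_{g_i}$ uniformly via Proposition~\ref{prop: Obhat expanded} and hypothesis (3), then run the pointwise Gr\"onwall argument on $\log g_i(t)(V,V)$ and combine with hypothesis (1). The only cosmetic difference is that the paper normalizes $g_k(V,V)=1$ where you invoke Cauchy--Schwarz, but the logic is identical.
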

\begin{proof}
 We show that $\left|\frac{\dif}{\dif t}\log g_k(t)(V,V)\right|$ is bounded uniformly in $k$. Fix $k\in\N$. First,
 \begin{equation*}
  \left|\frac{\dif}{\dif t}\log g_k(t)(V,V)\right|=\left|\frac{\frac{\dif}{\dif t}g_k(t)(V,V)}{g_k(t)(V,V)}\right|.
 \end{equation*}
 Since the numerator and denominator are bilinear, it suffices to show the above is bounded when $g_k(V,V)=1$, in which case the right hand side reduces to $|\dif_tg_k(t)(V,V)|$.
 In order to show this is bounded, we use the flow equation \eqref{eq: short time existence system} and the expression for the gradient given by \eqref{eq: gradient expressed with covariant derivatives of Rm}.
 \begin{align*}
  \left|\frac{\dif}{\dif t}g_k(t)(V,V)\right| & \leq\left|\frac{\dif}{\dif t}g_k(t)\right|_k \\
  & =\left|\frac{(-1)^{\frac{n}{2}}}{n-2}\Delta_k^{\frac{n}{2}-1}\Rc_k+\frac{(-1)^{\frac{n}{2}-1}}{2(n-1)}\Delta_k^{\frac{n}{2}-2}\nabla_k^2 R_k+\sum_{j=2}^{n/2}P_j^{n-2j}(\Rm_k)\right|_k \\
  & \leq\sum_{p=0}^{n-2}a_p|\nabla_k^p(\Rm_k)|_k \\
  & \leq\sum_{p=0}^{n-2}a_pCC'_{p,0}\equiv\overline{C}_0.
 \end{align*}
Then
\begin{align*}
\overline{C}_0|t_1-t_0| & \geq\int_{t_0}^{t_1}\left|\frac{\dif}{\dif t}\log g_k(t)(V,V)\right|\,dt \\
& \geq\left|\int_{t_0}^{t_1}\frac{\dif}{\dif t}\log g_k(t)(V,V)\,dt\right| \\
& \geq\left|\log\frac{g_k(t_1)(V,V)}{g_k(t_0)(V,V)}\right|,
\end{align*}
 which yields
 \begin{gather*}
  e^{-\bar{C}_0|t_1-t_0|}g_k(t_0)(V,V)\leq g_k(t_1)(V,V)\leq e^{\bar{C}_0|t_1-t_0|}g_k(t_0)(V,V) \\
  B_0^{-1}e^{-\bar{C}_0|t_1-t_0|}g(V,V)\leq g_k(t_1)(V,V)\leq B_0e^{\bar{C}_0|t_1-t_0|}g(V,V).
 \end{gather*}
\end{proof}
We will need the following two lemmas in the next proof.
\begin{lemma} \label{lem: tensor norm equivalence}
 (Chow et al. \cite{ChowEtAlRicciFlowTechniquesApplications} Lemma 3.13). Suppose that the metrics $g$ and $h$ are equivalent: $C^{-1}g\leq h\leq Cg$. Then for any $(p,q)$-tensor $T$, we have $|T|_h\leq C^{(p+q)/2}|T|_g$.
\end{lemma}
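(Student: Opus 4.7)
The plan is to reduce the inequality to a pointwise eigenvalue estimate via simultaneous diagonalization. Because both norms and the hypothesis are pointwise, I would fix $x \in M$ and choose a $g$-orthonormal basis $\{e_i\}_{i=1}^n$ of $T_xM$ in which $h$ is diagonal; this is possible because $h$ is a symmetric bilinear form, hence $g$-orthogonally diagonalizable. Writing the eigenvalues as $\lambda_1, \ldots, \lambda_n > 0$, applying the hypothesis $C^{-1}g \leq h \leq Cg$ to each $e_i$ gives $C^{-1} \leq \lambda_i \leq C$, and inverting gives the same bound on the eigenvalues $\lambda_i^{-1}$ of $h^{-1}$ relative to $g^{-1}$.

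Next I would expand $|T|_h^2$ and $|T|_g^2$ in components in this frame. In the chosen basis, $g_{ij} = \delta_{ij}$ and $g^{ij} = \delta^{ij}$, so $|T|_g^2$ collapses to the sum of the squared components of $T$. Similarly $h_{ij} = \lambda_i\delta_{ij}$ and $h^{ij} = \lambda_i^{-1}\delta^{ij}$, so in $|T|_h^2$ each contravariant index of $T$ contributes a factor $\lambda_{i_a}$ (from lowering via $h$) and each covariant index contributes a factor $\lambda_{k_b}^{-1}$ (from raising via $h^{-1}$), again with the Kronecker deltas forcing a single sum over component indices. Bounding each of these $p+q$ eigenvalue factors by $C$ pulls out a uniform $C^{p+q}$ in front of the same sum, yielding $|T|_h^2 \leq C^{p+q}|T|_g^2$, and taking a square root gives the stated inequality.

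I do not expect any real obstacle here: the argument is a clean pointwise linear-algebra calculation once one simultaneously diagonalizes the two metrics. The only conceptual point worth flagging is that the metric comparison $C^{-1}g \leq h \leq Cg$ passes to the dual comparison $C^{-1}g^{-1} \leq h^{-1} \leq Cg^{-1}$ with the same constant, which is precisely the reason contravariant and covariant indices contribute symmetrically and produce the symmetric exponent $(p+q)/2$.
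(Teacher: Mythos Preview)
Your argument is correct and is the standard proof of this fact. Note that the paper does not supply its own proof here: the lemma is quoted from Chow et al.\ \cite{ChowEtAlRicciFlowTechniquesApplications} without argument, and your simultaneous-diagonalization approach is exactly the one given in that reference.
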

\begin{lemma} \label{lem: equivalence of nabla(g_k) and Gamma_k-Gamma}
(Chow et al. \cite{ChowEtAlRicciFlowTechniquesApplications} Lemma 3.11)  Let $(M,g)$ be a Riemannian manifold, and let $\{g_k(t)\}_{k\in\N}$ be a collection of metrics on $M$. Then for each $k$, $\nabla g_k(t)$ and $\Gamma_k(t)-\Gamma$ are equivalent:
 \begin{equation*}
  \tfrac{1}{2}|\nabla g_k(t)|_k\leq |\Gamma_k(t)-\Gamma|_k\leq\tfrac{3}{2}|\nabla g_k(t)|_k.
 \end{equation*}
 \end{lemma}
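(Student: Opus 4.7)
The plan is to prove the lemma purely algebraically by exploiting the fact that the difference of any two affine connections is a tensor. I would set $A^c_{ab} := \Gamma_k(t)^c_{ab} - \Gamma^c_{ab}$, which is symmetric in $a,b$ since both connections are torsion free, and interpret $|\Gamma_k(t) - \Gamma|_k$ as the $g_k(t)$-norm of $A$ regarded as a $(1,2)$-tensor. The goal then becomes a direct comparison between $|A|_k$ and $|\nabla g_k(t)|_k$, with both norms taken with respect to $g_k(t)$.

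The main identity I would establish first is the relation between $\nabla g_k(t)$ and $A$. Since $(\nabla_k)_a (g_k)_{bc} = 0$ and the difference $\nabla - \nabla_k$ acts on a $(0,2)$-tensor as minus the $A$-correction in each lower slot, a short computation gives
\[
\nabla_a (g_k)_{bc} = A^d_{ab}\, (g_k)_{dc} + A^d_{ac}\, (g_k)_{bd}.
\]
This yields the lower bound immediately: lowering an index by $g_k$ is an isometry for the $g_k$-norm, so each of the two terms on the right has $g_k$-norm equal to $|A|_k$, and the triangle inequality gives $|\nabla g_k(t)|_k \leq 2|A|_k$, i.e.\ $|A|_k \geq \tfrac{1}{2}|\nabla g_k(t)|_k$.

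For the upper bound, I would invert this identity by taking the cyclic combination $\nabla_a (g_k)_{bc} + \nabla_b (g_k)_{ac} - \nabla_c (g_k)_{ab}$. Writing out all six resulting $A$-terms and invoking the symmetry $A^d_{ab} = A^d_{ba}$ together with the symmetry of $g_k$, four of them cancel in pairs, leaving $2A^d_{ab}(g_k)_{dc}$. Raising the $c$-index with $g_k^{-1}$ gives
\[
A^e_{ab} = \tfrac{1}{2}\, g_k^{ec}\bigl[\nabla_a (g_k)_{bc} + \nabla_b (g_k)_{ac} - \nabla_c (g_k)_{ab}\bigr].
\]
Since raising an index with $g_k$ is also an isometry for the $g_k$-norm, each of the three $\nabla g_k$-terms on the right has $g_k$-norm equal to $|\nabla g_k(t)|_k$, so the triangle inequality yields $|A|_k \leq \tfrac{3}{2}|\nabla g_k(t)|_k$.

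I do not anticipate any serious obstacle, since the argument is entirely algebraic and amounts to careful bookkeeping of tensor index manipulations. The only subtlety is the convention that $|\Gamma_k(t) - \Gamma|_k$ denotes the $g_k(t)$-tensor norm of the difference tensor $A$, together with the observation that raising or lowering an index with $g_k(t)$ preserves that norm; once these are fixed, both inequalities fall out of the two displayed identities via a single application of the triangle inequality.
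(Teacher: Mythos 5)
Your argument is correct and is the standard proof of this fact; note that the paper does not prove this lemma itself but simply quotes it from Chow et al.\ (Lemma 3.11 there), where essentially the same computation is carried out. Your key inversion identity $A^e_{ab}=\tfrac12 g_k^{ec}[\nabla_a(g_k)_{bc}+\nabla_b(g_k)_{ac}-\nabla_c(g_k)_{ab}]$ is exactly the identity the paper records and reuses later in the proof of Lemma \ref{lem: extend convergence of metrics over time interval: ptwise estimates on spacetime derivatives of metrics}, and both of your norm estimates (the factor $2$ from the two-term expansion of $\nabla_a(g_k)_{bc}$ and the factor $3/2$ from the three-term cyclic sum, using that raising or lowering indices with $g_k$ preserves the $g_k$-norm) are sound.
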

\begin{lemma} \label{lem: extend convergence of metrics over time interval: ptwise estimates on spacetime derivatives of metrics}
 For every $p,q\geq 0$, there is a constant $\tilde{C}_{p,q}$ independent of $k$ such that $|\dif_t^q\nabla^p g_k(t)|\leq\tilde{C}_{p,q}$ on $L\times[\beta,\psi]$. 
\end{lemma}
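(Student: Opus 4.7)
The plan is a double induction: first control pure spatial derivatives $\nabla^p g_k$ for $0\leq p\leq k$, then use the flow equation to reduce time derivatives to additional spatial derivatives of curvature. Since Lemma \ref{lem: extend convergence of metrics over time interval: uniform spacetime equivalence of metrics} gives uniform equivalence of $g_k(t)$ with $g$ throughout $L\times[\beta,\psi]$, Lemma \ref{lem: tensor norm equivalence} lets us convert freely between $g$- and $g_k$-norms, which disposes of the case $p=q=0$.

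For the pure spatial case $q=0$, I would induct on $p$. Assuming $|\nabla^j g_k(t)|\leq\widetilde{C}_{j,0}$ on $L\times[\beta,\psi]$ for $j<p$, Proposition \ref{prop: commutator of partial_t and nabla^k} together with the flow equation gives
\begin{equation*}
\dif_t\nabla^p g_k=\nabla^p\Obhat(g_k)+\sum_{j=0}^{p-1}\nabla^j\bigl(\nabla\Obhat(g_k)*\nabla^{p-1-j}g_k\bigr).
\end{equation*}
By Proposition \ref{prop: Obhat expanded}, $\Obhat(g_k)$ is a polynomial in the $g_k$-covariant derivatives of $\Rm(g_k)$ up to order $n-2$. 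Expanding each $\nabla^r\Obhat(g_k)$ as $\nabla_k^r\Obhat(g_k)$ plus a polynomial in $\Gamma_k-\Gamma$ and its $\nabla$-derivatives of order at most $r-1$, and using Lemma \ref{lem: equivalence of nabla(g_k) and Gamma_k-Gamma} to control $\Gamma_k-\Gamma$ by $\nabla g_k$, every term on the right-hand side is pointwise bounded by $C(1+|\nabla^p g_k|)$, with $C$ depending only on the previously-bounded lower spatial derivatives, the curvature bounds from hypothesis (3), and the equivalence constants. The resulting inequality
\begin{equation*}
\tfrac{d}{dt}|\nabla^p g_k|^2\leq C\bigl(1+|\nabla^p g_k|^2\bigr)
\end{equation*}
combined with the initial bound $|\nabla^p g_k(t_0)|\leq C_p$ from hypothesis (2) yields the desired uniform bound by Gronwall.

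For time derivatives with $q\geq 1$, I would induct on $q$. Since $\dif_t g_k=\Obhat(g_k)$, one has $\dif_t^q\nabla^p g_k=\dif_t^{q-1}\nabla^p\Obhat(g_k)$. Expanding $\Obhat(g_k)$ via Proposition \ref{prop: Obhat expanded} and applying $\dif_t^{q-1}\nabla^p$, using Proposition \ref{prop: commutator of partial_t and nabla^k} to commute $\dif_t$ past $\nabla$ at the cost of additional factors of $\nabla\dif_t g_k=\nabla\Obhat(g_k)$, one reduces the bound to bounds on $\dif_t^a\nabla_k^b\Rm(g_k)$ with $a+b\leq p+q+n-2$ (supplied by hypothesis (3) and norm equivalence) together with bounds on $\dif_t^{q'}\nabla^{p'}g_k$ having $q'<q$, which are available by the inductive hypothesis.

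The main obstacle is the bookkeeping in converting between $\nabla$ and $\nabla_k$: the connection difference $\Gamma_k-\Gamma$ is controlled only by $\nabla g_k$ itself, so a naive expansion risks a self-referential dependence at the top order. The delicate point to verify is that in each term arising from expanding $\nabla^p\Obhat(g_k)$, the factor $\Gamma_k-\Gamma$ and its $\nabla$-derivatives appear only up to order $p-1$, so the top-order factor remains a single copy of $\nabla^p g_k$ and can be absorbed linearly into the Gronwall inequality.
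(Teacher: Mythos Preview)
Your approach is essentially the paper's: induct on $p$ for $q=0$ by expressing $\nabla^p\Obhat(g_k)$ as $\nabla_k^p\Obhat(g_k)$ plus connection-difference terms, verify the top-order difference contributes only linearly in $|\nabla^p g_k|$ (the paper makes this explicit via the telescoping identity $\nabla^N A-\nabla_k^N A=\sum_{i=1}^N\nabla^{N-i}(\nabla-\nabla_k)\nabla_k^{i-1}A$, isolating the $i=1$ term), and close with a Gronwall/ODE comparison; the paper likewise dismisses $q>0$ as ``a similar procedure.'' One small correction: since $\nabla$ here is the connection of the \emph{fixed} background metric $g$, it commutes with $\partial_t$, so Proposition~\ref{prop: commutator of partial_t and nabla^k} is not the relevant tool and in fact $\partial_t\nabla^p g_k=\nabla^p\Obhat(g_k)$ exactly---the extra sum you wrote vanishes, though its presence does no harm to the estimate.
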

\begin{proof}
Define the bounds $\overline{C}_j$ for $j$ satisfying $0\leq j\leq j-n+2$ by
\begin{equation*}
|\nabla_k^j\Obhat_k|\leq\sum_{p=j}^{n-2+j}a_pCC'_{p,0}\equiv\overline{C}_j.
\end{equation*}

We first prove the lemma for $(p,q)=(1,0)$. Hamilton showed in Theorem 7.1 of \cite{Hamilton3MfdPositiveRicciCurvature} that $\dif_t\Gamma=g^{-1}*\nabla\dif_t g$. Then
\begin{equation*}
 |\dif_t(\Gamma_k-\Gamma)|_k\leq C|\nabla_k\Obhat_k|_k\leq C\overline{C}_1.
\end{equation*}
So
\begin{align*}
 C\overline{C}_1|t_1-t_0| & \geq\int_{t_0}^{t_1}|\dif_t(\Gamma_k(t)-\Gamma)|_k\,dt \\
 & \geq\left|\int_{t_0}^{t_1}\partial_t(\Gamma_k(t)-\Gamma)\,dt\right|_k \\
 & \geq|\Gamma_k(t_1)-\Gamma|_k-|\Gamma_k(t_0)-\Gamma|_k.
\end{align*}
This gives
\begin{align*}
 |\Gamma_k(t)-\Gamma|_k & \leq C\overline{C}_1|t-t_0|+|\Gamma_k(t_0)-\Gamma|_k \\
 & \leq C\overline{C}_1|t-t_0|+\tfrac{3}{2}|\nabla g_k(t_0)|_k \\
 & \leq C\overline{C}_1|t-t_0|+\tfrac{3}{2}B_0^{3/2}C_1 \\
 & \leq C\overline{C}_1|\psi-\beta|+\tfrac{3}{2}B_0^{3/2}C_1.
\end{align*}
We used Lemma \ref{lem: equivalence of nabla(g_k) and Gamma_k-Gamma} in the second line and Lemma \ref{lem: tensor norm equivalence} in the third line. Then
\begin{align*}
 |\nabla g_k(t)| & \leq B(t,t_0)^{3/2}|\nabla g_k(t)|_k \\
 & \leq B(\psi,\beta)^{3/2}2|\Gamma_k(t)-\Gamma|_k \\
 & \leq B(\psi,\beta)^{3/2}(C\overline{C}_1|\psi-\beta|+3B_0^{3/2}C_1)\equiv\tilde{C}_{1,0},
\end{align*}
where we used Lemma \ref{lem: equivalence of nabla(g_k) and Gamma_k-Gamma} in the second line.

Next, we prove the lemma for $p$ satisfying $p\leq k$ when $q=0$. We will show that for $p\geq 1$,
\begin{equation} \label{eq: nabla^p partial(g_k)/partial(t) induction conclusion}
 |\nabla^p\dif_t g_k|\leq C''_p|\nabla^p g_k|+C'''_p,\quad |\nabla^p g_k|\leq\tilde{C}_{p,0}. 
\end{equation}

If $p=1$, then
\begin{align*}
 |\nabla\dif_tg_k(t)| & \leq B(t,t_0)^{3/2}|(\nabla-\nabla_k)\dif_tg_k+\nabla_k\dif_tg_k|_k \\
 & \leq B(t,t_0)^{3/2}C|\Gamma-\Gamma_k|_k|\dif_tg_k|_k+|\nabla_k\dif_tg_k|_k \\
 & \leq B(t,t_0)^{3/2}C|\nabla g_k|\overline{C}_0+\overline{C}_1
\end{align*}
and we have already shown that $|\nabla g_k|\leq\tilde{C}_{1,0}$.

Let $N\geq 2$ and assume that \eqref{eq: nabla^p partial(g_k)/partial(t) induction conclusion} is true for $0\leq p\leq N-1$. The telescoping identity
\begin{equation*}
\nabla^NA-\nabla_k^NA=\sum_{i=1}^N\nabla^{N-i}(\nabla-\nabla_k)\nabla_k^{i-1}A
\end{equation*}
gives
\begin{align} \label{eqn: nabla g_k differential inequality internal eqn q=0}
 |\nabla^N\dif_tg_k| & =\left|\sum_{i=1}^N\nabla^{N-i}(\nabla-\nabla_k)\nabla_k^{i-1}\dif_tg_k+\nabla_k^N\dif_tg_k\right| \nonumber \\
 & \leq\sum_{i=1}^N|\nabla^{N-i}(\nabla-\nabla_k)\nabla_k^{i-1}\dif_tg_k|+|\nabla_k^N\dif_tg_k| \nonumber \\
 & =|\nabla^{N-1}(\nabla-\nabla_k)\dif_tg_k|+\sum_{i=2}^N|\nabla^{N-i}(\nabla-\nabla_k)\nabla_k^{i-1}\dif_tg_k|+|\nabla_k^N\dif_tg_k|. 
\end{align}
We estimate the first term of \eqref{eqn: nabla g_k differential inequality internal eqn q=0}:
\begin{align*}
|\nabla^{N-1}(\nabla-\nabla_k)\dif_tg_k| & =|\nabla^{N-1}(\nabla g_k*\dif_tg_k)| \\
& \leq\sum_{j=0}^{N-1}b_j|\nabla^{N-j}g_k||\nabla^j\dif_tg_k| \\
& \leq b_0\overline{C}_0|\nabla^N g_k|+\sum_{j=1}^{N-1}b_j\tilde{C}_{N-j,0}(C''_j\tilde{C}_{j,0}+C'''_j).
\end{align*}
The first equality is due to the identity
\begin{equation} \label{eqn:  nabla g_k differential inequality gamma difference equiv to nabla}
(g_k)^{ec}(\nabla_a(g_k)_{bc}+\nabla_b(g_k)_{ac}-\nabla_c(g_k)_{ab})=2(\Gamma_k)_{ab}^e-2\Gamma_{ab}^e.
\end{equation}
We estimate the second term of \eqref{eqn: nabla g_k differential inequality internal eqn q=0}:
\begin{align*}
\sum_{i=2}^N|\nabla^{N-i}(\nabla-\nabla_k)\nabla_k^{i-1}\dif_tg_k|
& =\sum_{i=2}^N|\nabla^{N-i}(\nabla g_k*\nabla_k^{i-1}\dif_tg_k)| \\
& \leq\sum_{i=2}^N\sum_{j=0}^{N-i}b'_j|\nabla^{N-i-j+1}g_k||\nabla^j\nabla_k^{i-1}\dif_tg_k| \\
&  \leq\sum_{i=2}^N\sum_{j=0}^{N-i}b'_j|\nabla^{N-i-j+1}g_k|\sum_{l=0}^j|(\nabla-\nabla_k)^l\nabla_k^{j-l}\partial_tg_k| \\
&  =\sum_{i=2}^N\sum_{j=0}^{N-i}b'_j|\nabla^{N-i-j+1}g_k|\sum_{l=0}^j|(\nabla g_k)^{*l}*\nabla_k^{j-l+i-1}\partial_t g_k| \\
&  \leq\sum_{i=2}^N\sum_{j=0}^{N-i}b'_j|\nabla^{N-i-j+1}g_k|\sum_{l=0}^j b''_l|\nabla g_k|^l|\nabla_k^{j-l+i-1}\partial_t g_k| \\
& \leq\sum_{i=2}^N\sum_{j=0}^{N-i}\sum_{l=0}^j b'_j\tilde{C}_{N-i+j-1,0}b''_l\tilde{C}_{1,0}^l\overline{C}_{j-l+i-1}.
\end{align*}
We applied \eqref{eqn:  nabla g_k differential inequality gamma difference equiv to nabla} in the first and fourth lines. The last term of \eqref{eqn: nabla g_k differential inequality internal eqn q=0} is also bounded: $|\nabla_k^N\dif_tg_k|\leq\overline{C}_N$. Collecting the three previous estimates, we obtain
\begin{equation*}
|\nabla^N\dif_tg_k|\leq C''_N|\nabla^N g_k|+C'''_N.
\end{equation*}
Applying the preceding inequality, we get
\begin{align} \label{eq: cptness induction diff ineq}
 \dif_t|\nabla^Ng_k|^2 & =2\langle\dif_t\nabla^Ng_k,\nabla^Ng_k\rangle \nonumber \\
 & \leq |\dif_t\nabla^Ng_k|^2+|\nabla^Ng_k|^2 \nonumber \\
 & \leq (1+2(C''_N)^2)|\nabla^Ng_k|^2+2(C'''_N)^2.
\end{align}
The solution of the ODE $dA/d\sigma=c_1A+c_2$ is given by
\begin{equation} \label{eq: cptness induction ODE}
 A(\sigma)=e^{c_1(\sigma-\sigma_1)}\left[A(\sigma_1)+\tfrac{c_2}{c_1}(1-e^{-c_1(\sigma-\sigma_1)})\right].
\end{equation}
Applying \eqref{eq: cptness induction ODE} to \eqref{eq: cptness induction diff ineq}, we get
\begin{align*}
 |\nabla^Ng_k|^2(t)
 & \leq e^{(1+2(C''_N)^2)(t-t_0)}\left[|\nabla^Ng_k|^2(t_0)+\frac{2(C'''_N)^2}{1+2(C''_N)^2}\big(1-e^{(1+2(C''_N)^2)(t_0-t)}\big)\right] \\
 & \leq e^{(1+2(C''_N)^2)(\psi-t_0)}\left[C_N+\frac{2(C'''_N)^2}{1+2(C''_N)^2}\big(1-e^{(1+2(C''_N)^2)(t_0-\beta)}\big)\right]\equiv\tilde{C}_{N,0}^2.
\end{align*}
This completes the inductive proof of \eqref{eq: nabla^p partial(g_k)/partial(t) induction conclusion} and the proof of the lemma for any $p$ when $q=0$.
Since $\dif_t^q\nabla^pg_k=\nabla^p\dif_t^qg_k$, a similar proceedure may be used to prove the lemma when $q>0$.
\end{proof}
\begin{proof}[Proof of Proposition \ref{prop: extend convergence of metrics over time interval}]
 The two preceding Lemmas \ref{lem: extend convergence of metrics over time interval: uniform spacetime equivalence of metrics} and \ref{lem: extend convergence of metrics over time interval: ptwise estimates on spacetime derivatives of metrics} prove the proposition.
\end{proof}
We are now able to prove the compactness Theorem \ref{thm: cptness for seq of cpt mfds} for solutions of the AOF. We need the following lemma.
\begin{proposition} \label{prop: arzela-ascoli for metrics}
 (Chow et al. \cite{ChowEtAlRicciFlowTechniquesApplications} Corollary 3.15) Let $(M^n,g)$ be a Riemannian manifold and let $L\subset M^n$ be compact. Furthermore, let $p$ be a nonnegative integer. If $\{g_k\}_{k\in\N}$ is a sequence of Riemannian metrics on $L$ such that
 \begin{equation*}
  \sup_{0\leq|\alpha|\leq p+1}\sup_{x\in L}|\nabla^\alpha g_k|\leq C<\infty
 \end{equation*}
 and if there exists $\delta>0$ such that $g_k(V,V)\geq\delta g(V,V)$ for all $V\in TM$,then there exists a subsequence $\{g_k\}$ and a Riemannian metric $g_\infty$ on $L$ such that $g_k$ converges in $C^p$ to $g_\infty$ as $k\to\infty$.
\end{proposition}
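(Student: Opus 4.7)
The plan is to reduce the statement to the classical Arzel\`{a}--Ascoli theorem applied to components of $g_k$ in finitely many coordinate charts covering $L$, and then to check that the resulting limit is a Riemannian metric using the uniform lower bound.

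First I would cover $L$ by finitely many charts $(U_\alpha,\phi_\alpha)$ whose closures are compact in $M$; this is possible since $L$ is compact. On each such chart, the background metric $g$, its inverse, and the associated Christoffel symbols $\Gamma_{ij}^k(g)$ together with all of their coordinate derivatives up to any fixed order are uniformly bounded. The hypothesis gives tensorial $g$-norm bounds $|\nabla^\alpha g_k|_g \le C$ for $0 \le |\alpha| \le p+1$. Since $|\cdot|_g$ controls all components in any orthonormal frame (and equivalently, up to the bounded factors from $g$ and $g^{-1}$, all components in the chart coordinate frame), these bounds convert to uniform bounds on the coordinate components of the tensors $\nabla^\alpha g_k$ on each chart.

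Next I would convert bounds on covariant derivatives $\nabla^\alpha g_k$ into bounds on coordinate partial derivatives $\partial^\alpha (g_k)_{ij}$. This is done inductively on $|\alpha|$: the relation
\begin{equation*}
\nabla_i (g_k)_{jl} = \partial_i (g_k)_{jl} - \Gamma_{ij}^m (g_k)_{ml} - \Gamma_{il}^m (g_k)_{jm}
\end{equation*}
lets one solve for $\partial_i (g_k)_{jl}$ in terms of $\nabla_i (g_k)_{jl}$ and $(g_k)_{jl}$ with coefficients built from the (bounded) Christoffel symbols of $g$. Iterating this identity and differentiating the background Christoffel symbols at most $|\alpha|-1$ times, I obtain a uniform bound on $|\partial^\alpha (g_k)_{ij}|$ in each chart for all $|\alpha| \le p+1$.

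With uniform $C^{p+1}$ bounds on the components in each of finitely many charts, I apply the classical Arzel\`{a}--Ascoli theorem: a $C^{p+1}$--bounded sequence of functions on a compact set is precompact in $C^p$. A standard diagonal extraction across the finitely many charts, using the countability of the chart set and the compactness of the closures, yields a single subsequence along which every component $(g_k)_{ij}$ converges in $C^p$ on every chart to some symmetric tensor field $g_\infty$ on $L$. Compatibility of the limits on overlaps is automatic since convergence is pointwise (in fact uniform) in each chart.

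The last step is to verify that $g_\infty$ is a Riemannian metric. Symmetry passes to the $C^0$ limit. Positive definiteness follows from the hypothesis $g_k(V,V) \ge \delta\, g(V,V)$, which passes to the limit to give $g_\infty(V,V) \ge \delta\, g(V,V) > 0$ for every nonzero $V$. The main obstacle, as usual for this kind of result, is the bookkeeping in the chart-to-tensor conversion: one must carry out the induction carefully so that the combinatorial coefficients (products of $\nabla^j \Gamma$ and powers of $g,g^{-1}$) remain uniformly bounded independently of $k$. Once that is in place, the rest is Arzel\`{a}--Ascoli together with a diagonal argument, and the positive definiteness of the limit is immediate from the lower bound.
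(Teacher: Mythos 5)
This proposition is quoted in the paper as a known result (Chow et al., Corollary 3.15) and is not proved there, so there is no in-paper argument to compare against. Your chart-based proof is correct and is essentially the standard one: the inductive conversion between $\nabla^\alpha g_k$ and $\partial^\alpha (g_k)_{ij}$ using the bounded background Christoffel symbols, classical Arzel\`{a}--Ascoli on each of finitely many charts with a diagonal extraction, tensoriality of the transition maps to glue the chart limits, and the lower bound $g_k(V,V)\geq\delta g(V,V)$ passing to the limit to give positive definiteness (the needed upper bound $g_k\leq C'g$ coming from the $|\alpha|=0$ case of the hypothesis). No gaps.
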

\begin{proof}[Proof of Theorem \ref{thm: cptness for seq of cpt mfds}]
Since we are given a uniform bound on $|\Rm(g_k)|_{g_k}$, the pointwise smoothing estimates given by Theorem \ref{thm:C^m smoothing estimate along AOF} furnish unform bounds on $\|\nabla_{g_k(t_0)}^m\Rm(g_k(t_0))\|_{C^0(g_k(t_0))}$ for all $m\in\N$. Therefore, since the $(M_k,g_k)$ are complete, the Cheeger-Gromov compactness Theorem \ref{thm: Cheeger Gromov cptness} yields a subsequence of $\{(M_k,g_k(t),O_k)\}_{k\in\N}$, also called $\{(M_k,g_k(t),O_k)\}_{k\in\N}$, for which $\{(M_k,g_k(t_0),O_k)\}_{k\in\N}$ converges to a complete pointed Riemannian manifold $(M_\infty^n,h,O_\infty)$.

 Fix a compact subset $L$ of $M_\infty$ and a closed interval $[\beta,\psi]$, with $t_0\in(\beta,\psi)$ of $(\alpha,\omega)$. Since $\{(M_k,g_k(t_0),O_k)\}_{k\in\N}$ converges to $(M_\infty,h,O_\infty)$, by definition there exists an exhaustion $\{U_k\}_{k\in\N}$ of $M_\infty$ by open sets with $O_\infty\in U_k$ and a sequence of diffeomorphisms $\Phi_k:U_k\to V_k\equiv\Phi_k(U_k)\subset M_k$ with $\Phi_k(O_\infty)=O_k$, such that if $h_k\equiv\Phi_k^*\left[g_k(t_0)|_{V_k}\right]$ , then $(U_k,h_k)$ converges in $C^\infty$ to $(M_\infty,h)$ on compact sets in $M_\infty$. Since the $U_k$ exhaust $M_\infty$, $L\subset U_k$ for some $k$. So the metrics $h_k$ are uniformly equivalent to $h$ on $L$. We also obtain from the $C^\infty$ convergence that for each $p\geq 1$, there exists a $C_p$ independent of $x\in L$ and $k$ such that $|\nabla_h^p h_k|_h\leq C_p$.

Let $G_k(t)\equiv\Phi_k^*\left[g_k(t)|_{V_k}\right]$; then $h_k=G_k(t_0)$. From  the pointwise smoothing estimates given by Theorem \ref{thm:C^m smoothing estimate along AOF}, for each $p$ we obtain a bound $C'_{p,0}$ uniform on $L\x[\beta,\psi]$ independent of $k$ such that $|\nabla_{G_k}^p\Rm(G_k)|_{G_k}\leq C'_{p,0}$ on $L\times[\beta,\psi]$. Using the expression of $\dif_t\nabla_{G_k}^p\Rm(G_k)$ in terms of covariant derivatives of $\Rm(G_k)$ given by Proposition \ref{prop: evolution of nabla^k Rm}, for each $(p,q)$ we obtain a bound $C'_{p,q}$ uniform on $L\x[\beta,\psi]$ independent of $k$ such that $|\dif_t^q\nabla_{G_k}^p\Rm(G_k)|_{G_k}\leq C'_{p,q}$ on $L\times[\beta,\psi]$. We then conclude via Proposition \ref{prop: extend convergence of metrics over time interval} that the metrics $G_k$ are uniformly equivalent to $h$ on $L\x[\beta,\psi]$ and that for every $p,q\geq 0$, there is a constant $\tilde{C}_{p,q}$ independent of $k$ such that $|\dif_t^q\nabla_h^p G_k|_h\leq\tilde{C}_{p,q}$ on $L\times[\beta,\psi]$.

The uniform equivalence of the $G_k$ to $h$ and the uniform bounds $|\dif_t^q\nabla_h^p G_k|_h\leq\tilde{C}_{p,q}$ allow us to apply an Arzel\`{a}-Ascoli type Proposition \ref{prop: arzela-ascoli for metrics} to the metrics $G_k(t)+dt^2$ on $L\times[\beta,\psi]$ and obtain a subsequence that converges in $C^\infty(L\times[\beta,\psi],h+dt^2)$ to a metric $g_\infty(t)+dt^2$ such that $g_\infty(0)=h$; we relabel the convergent subsequence as $\{G_k(t)+dt^2\}_{k\in\N}$. It follows that $g_\infty(t)+dt^2$ is uniformly equivalent to $G_1(t)+dt^2$  on $L\times[\beta,\psi]$. Then $g_\infty(t)+dt^2$ is uniformly equivalent to $h+dt^2$  on $L\times[\beta,\psi]$ since $G_1(t)+dt^2$ is uniformly equivalent to $h+dt^2$ on $L\times[\beta,\psi]$. Since $(M,h)$ is complete, $(M\x(\alpha,\omega),h+dt^2)$ is also complete. The uniform equivalence of $g_\infty(t)+dt^2$ to $h+dt^2$ on compact subsets of $M\x(\alpha,\omega)$ and the Hopf-Rinow theorem imply that $(M_\infty\x(\alpha,\omega),g_\infty(t)+dt^2)$ is complete.

Since $(M_\infty\x(\alpha,\omega),g_\infty(t)+dt^2)$ is complete, compact sets are equivalent to closed, bounded ones. A compact set in $M_\infty\x(\alpha,\omega)$ is contained in the compact set that is the product of a closed geodesic ball in $M_\infty$ and a closed interval in $(\alpha,\omega)$. So the metrics $G_k(t)+dt^2$ subsequentially converge in $C^\infty(M_\infty\x(\alpha,\omega),h+dt^2)$. Let $\{G_k(t)+dt^2\}_{k\in\N}$ be the convergent subsequence. Then $\{(M_k,g_k(t),O_k)\}_{k\in\N}$ converges to $(M_\infty,g_\infty(t),O_\infty)$. It follows that for each $p,q$, $\partial_t^q\nabla_{g_\infty}^p G_k\to\partial_t^q\nabla_{g_\infty}^p g_\infty $ and $\Obhat(G_k)\to\Obhat(g_\infty)$ in $C(M_\infty\x(\alpha,\omega),g_\infty(t)+dt^2)$. Therefore  $(M_\infty,g_\infty,O_\infty)$ is a complete pointed solution to AOF for $t\in(\alpha,\omega)$.
\end{proof}
As our first corollary of the compactness theorem \ref{thm: cptness for seq of cpt mfds}, we show that under suitable conditions, we can obtain a singularity model for the ambient obstruction flow.
\begin{proof}[Proof of Theorem  \ref{thm: existence of singularity model at curvature singularity}]
We first show that the $g_i$ are also solutions to AOF by showing that if $\tilde{g}=\lambda g$ and $g$ satisfies AOF, given up to constants by
 \begin{equation*}
  \dif_t g=\Delta^{\frac{n}{2}-1}\Rc+\Delta^{\frac{n}{2}-2}\nabla^2 R+\sum_{j=2}^{n/2}P_j^{n-2j}(\Rm),
 \end{equation*}
then $\tilde{g}$ satisfies
 \begin{equation}\label{eq: tilde PDE}
  \dif_t\tilde{g}=\widetilde{\Delta}^{\frac{n}{2}-1}\widetilde{\Rc}+\widetilde{\Delta}^{\frac{n}{2}-2}\widetilde{\nabla}^2\tilde{R}+\sum_{j=2}^{n/2}P_j^{n-2j}(\widetilde{\Rm}).
 \end{equation}
 We evaluate the first term of the right side of \eqref{eq: tilde PDE}:
\begin{equation*}
\widetilde{\Delta}^{\frac{n}{2}-1}\widetilde{\Rc}=\left(\lambda^{-1}g^{-1}\nabla^2\right)^{\frac{n}{2}-1}\Rc=\lambda^{1-\frac{n}{2}}\Delta^{\frac{n}{2}-1}\Rc.
\end{equation*}
Similarly, the second term is equal to $\lambda^{1-\frac{n}{2}}\Delta^{\frac{n}{2}-1}\Rc$. The remaining terms are contractions of terms of the form
\begin{equation*}
\widetilde{\nabla}^{i_1}\widetilde{\Rm}\otimes\cdots\otimes\widetilde{\nabla}^{i_j}\widetilde{\Rm}
\end{equation*}
with $2\leq j\leq\frac{n}{2}$ and $i_1+\cdots+i_j=n-2j$. In order to contract on all but two indices of the above term, we need to contract $\frac{1}{2}(i_1+\cdots i_j+3j-j-2)=\frac{n}{2}-1$ pairs of indices. This implies that $P_j^{n-2j}(\widetilde{\Rm})=\lambda^{1-\frac{n}{2}}P_j^{n-2j}(\Rm)$. The left side of \eqref{eq: tilde PDE} is equal to $\lambda^{1-\frac{n}{2}}\dif_t g$. So $\tilde{g}$ satisfies \eqref{eq: tilde PDE}.

We have $|\Rm(g_i)|_{g_i}\leq 1$ on $M\x[-\lambda_i^{n/2}t_i,0]$ for each $i$ since the definition of the $\lambda_i$ implies
\begin{equation*}
|\Rm(g_i)|_{g_i}^2=\lambda_i^{-2}|\Rm|^2\leq\lambda_i^{-2}\lambda_i^2\leq 1.
\end{equation*}
Let $k\in\N$. There exists $i_k$ such that if $i\geq i_k$, then $\lambda_i^{n/2}t_i>k$. Then $\{g_i\}_{i\geq i_k}$ is a sequence of complete pointed solutions to AOF on $(-k,0]$. Since the Sobolev constant is scaling invariant, the uniform bound of $C_S(M,g)$ on $[0,T)$ implies a uniform bound independent of $i$ of $C_S(M,g_i)$ on $[0,T)$. We conclude from Lemma 3.2 of Hebey \cite{HebeySobolevSpacesOnRiemannianManifolds} that there exists a uniform lower bound independent of $i$ for $\inf_{x\in M}\vol(B_{g_i}(x,1))$. This and the bound $|\Rm(g_i)|_{g_i}\leq 1$ on $M\x[-\lambda_i^{n/2}t_i,0]$ for all $i$ give  a uniform lower bound independent of $i$ for $\injrad_{g_i(0)}(x_i)$ via the Cheeger-Gromov-Taylor theorem.

The proof of the compactness theorem \ref{thm: cptness for seq of cpt mfds} is unchanged if we replace $(\alpha,\omega)$ with $(-k,0]$. Thus, by theorem \ref{thm: cptness for seq of cpt mfds}, we obtain subsequential convergence of $\{(M,g_i(t),x_i)\}_{i\geq i_k}$ to a complete pointed solution $(M_\infty,g_\infty(t),x_\infty)$ to AOF for $t\in(-k,0]$.  By taking a further diagonal subsequence over the $k$, we get that $\{(M,g_i(t),x_i)\}_{i\geq 1}$ subsequentially converges to a complete pointed solution $(M_\infty,g_\infty(t),x_\infty)$ to AOF for $t\in(-\infty,0]$. The limit $(M_\infty,g_\infty(t))$ is not flat since
\begin{equation*}
|\Rm(g_\infty(0))(x_\infty)|_{g_\infty(0)}=1
\end{equation*}
by the definition of the $g_i(t)$.

We show that $M_\infty$ is not compact. Lemma 3.9 of Chow-Knopf \cite{ChowKnopfRicciFlowIntro} states that for a one parameter family of Riemannian manifolds $(M,g(t))$, the volume element evolves by $\dif_t dV_g=\frac{1}{2}g^{ij}\dif_t g_{ij}$. Applying the fact that $\Ob$ is traceless and the divergence theorem,
\begin{align*}
\frac{\dif}{\dif t}\vol(M,g(t)) & =\frac{1}{2}\int_M g^{ij}\frac{\dif g_{ij}}{\dif t}\, dV_{g(t)} \\
& =\frac{1}{2}\int_M[(-1)^\frac{n}{2}g^{ij}\Ob_{ij}+C(n)(\Delta^{\frac{n}{2}-1}R)g^{ij}g_{ij}]\,dV_{g(t)} \\
& =C(n)\int_M\Delta^{\frac{n}{2}-1}R\,dV_{g(t)} \\
& =0.
\end{align*}
Therefore the volume of $(M,g(t))$ is preserved along the flow. Since $\lambda_i\to\infty$,
\begin{equation*}
\vol(M_\infty,g_\infty(t))=\lim_{i\to\infty}\vol(M,g_i(t))=\lim_{i\to\infty}\lambda_i^{n/2}\vol(M,g(t_i+\lambda_i^\frac{n}{2}t))=\infty
\end{equation*}
for all  $t\in(-\infty,0]$. So the volume of $(M,g_\infty(t))$ is infinite for all $t\in(-\infty,0]$. The uniform volume lower bound for the $(M,g_i)$ passes in the limit to a uniform volume lower bound for $(M,g_\infty)$. Therefore $M_\infty$ is noncompact by Lemma 8.1 of Bour \cite{BourFourthOrderCurvatureFlowsAndGeometricApplications}.

Next, we show that the integral of the $Q$-curvature is nondecreasing along the flow on $M$. Along the flow, the derivative of $\int_M Q$ is given by
\begin{align*}
\frac{\partial}{\partial t}\int_M Q & =(-1)^\frac{n}{2}\frac{n-2}{2}\int_M \langle\Ob,\partial_t g\rangle \\
& =(-1)^\frac{n}{2}\frac{n-2}{2}\int_M (-1)^\frac{n}{2}|\Ob|^2+C(n)\int_M \langle\Ob,(\Delta^{\frac{n}{2}-1}R)g\rangle \\
& =\frac{n-2}{2}\int_M |\Ob|^2,
\end{align*}
where the third line holds since $\Ob$ is traceless. So the integral of the $Q$-curvature does not decrease along the flow.

Suppose that
\begin{equation*}
\sup_{t\in[0,T)}\int_M Q(g(t))\,dV_{g(t)}<\infty.
\end{equation*}
This is always true when $n=4$ since the Chern-Gauss-Bonnet theorem gives that for all $t\in[0,T)$,
\begin{equation*}
\int_M Q =8\pi^2\chi(M)-\tfrac{1}{4}\int_M|W|^2\leq 8\pi^2\chi(M).
\end{equation*}
So if the integral of the $Q$ curvature is bounded along the flow,
\begin{align*}
\int_0^T\int_M|\Ob|^2 & =\int_0^T\frac{\partial}{\partial t}\int_M Q \\
& =\lim_{t\uparrow T}\int_M Q(g(t))-\int_M Q(g(0)) \\
& <\infty.
\end{align*}
Let $\{(M,g_i(t),x_i)\}_{i\geq 1}$ be the convergent subsequence previously found in the proof. Fix $k\in\N$. Since $t_i\to T$ and $\lambda_i\to\infty$, we can choose a subsequence of times $\{t_{i_j}\}_{j\in\N}$ as follows:
\begin{equation*}
 i_1=\inf\big\{i:t_i\geq\tfrac{T}{2}, \lambda_i\geq\big(\tfrac{2k}{T}\big)^\frac{2}{n}\big\}, \quad i_j=\inf\big\{i:t_i\geq\tfrac{1}{2}(T+t_{i_{j-1}}),\lambda_i\geq\big(\tfrac{2k}{T-t_{i_{j-1}}}\big)^\frac{2}{n}\big\}
\end{equation*}
for $j\geq 2$. We relabel $\{t_{i_j}\}_{j\in\N}$ as$\{t_i\}_{i\in\N}$. Then
\begin{equation*}
\sum_{i=1}^\infty\int_{t_i-k\lambda_i^{-\frac{n}{2}}}^{t_i}\int_M|\Ob|^2<\int_0^T\int_M|\Ob|^2<\infty,
\end{equation*}
implying that, using the scaling law $\Ob(\lambda g)=\lambda^{\frac{2-n}{2}}\Ob(g)$,
\begin{align*}
0 & =\lim_{i\to\infty}\int_{t_i-k\lambda_i^{-\frac{n}{2}}}^{t_i}\int_M|\Ob(g)|_{g}^2\,dV_g\,dt \\
& =\lim_{i\to\infty}\int_{-k}^0\int_M\lambda_i^n|\Ob(g_i)|_{g_i}^2\lambda_i^{-\frac{n}{2}}\lambda_i^{-\frac{n}{2}}\,dV_{g_i}\,dt \\
& =\lim_{i\to\infty}\int_{-k}^0\int_M|\Ob(g_i)|_{g_i}^2\,dV_{g_i}\,dt.
\end{align*}
Since $\Ob(g_i)\to\Ob(g_\infty)$ in $C^\infty$ on compact subsets, this implies that $\Ob(g_\infty)\equiv 0$ on $[-k,0]$. So for each $k\in\N$, there exists a sequence of pointed solutions to AOF that converge to an obstruction flat pointed solution to AOF on $[-k,0]$. By taking a further diagonal subsequence over the $k$, we obtain a sequence of pointed solutions to AOF that converge to an obstruction flat complete pointed solution to AOF on $(-\infty,0]$.
\end{proof}
Finally, we provide a corollary of the compactness theorem \ref{thm: cptness for seq of cpt mfds} characterizing limits of nonsingular solutions to AOF.
\begin{proof}[Proof of Theorem \ref{thm: nonsingular limit at infinity}]
Suppose $M$ does not collapse at $\infty$. Then there exists a sequence $\{(x_i,t_i)\}_{i\in\N}\subset M\x[0,\infty)$ such that $\inf_i\injrad_{g(t_i)}(x_i)>0$. Let $g_i(t)=g(t+t_i)$ for $t\in[-t_i,\infty)$. Let $k\in\N$. Then there exists $i_k\in\N$ such that $t_i> k$ for all $i\geq i_k$. Since $\sup_{t\in[0,\infty)}\|\Rm\|_\infty<\infty$ and $\inf_i\injrad_{g_i(0)}(x_i)>0$, we apply Theorem \ref{thm: cptness for seq of cpt mfds} to obtain subseqential convergence in the sense of families of pointed Riemannian manifolds of $\{(M,g_i(t),x_i)\}_{i\geq i_k}$ to a complete pointed solution $(M_\infty,g_\infty(t),x_\infty)$ to AOF on $(-k,\infty)$. By taking a further diagonal subsequence over the $k$, we get that $\{(M,g_i(t),x_i)\}_{i\geq 1}$ subsequentially converges to a complete pointed solution $(M_\infty,g_\infty(t),x_\infty)$ to AOF on $(-\infty,\infty)$.

If $M_\infty$ is compact, then by the definition of convergence of complete pointed Riemannian manifolds, $M_\infty$ is diffeomorphic to $M$. Just as in the proof of Theorem \ref{thm: existence of singularity model at curvature singularity}, the volume of $(M,g(t))$ is preserved along the flow. So for all $t\in(-\infty,\infty)$,
\begin{equation*}
\vol(M_\infty,g_\infty(t))=\lim_{i\to\infty}\vol(M,g_i(t))=\lim_{i\to\infty}\vol(M,g(t_i+t))<\infty.
\end{equation*}
Suppose that
\begin{equation*}
\sup_{t\in[0,\infty)}\int_M Q(g(t))\,dV_{g(t)}<\infty.
\end{equation*}
This is always true when $n=4$ by the Chern-Gauss-Bonnet theorem. Using the same argument as in the proof of Theorem \ref{thm: existence of singularity model at curvature singularity}, we obtain
\begin{equation*}
\int_0^\infty\int_M|\Ob|^2<\infty.
\end{equation*}
Let $\{(M,g_i(t),x_i)\}_{i\geq 1}$ be the convergent subsequence previously found in the proof. Since $t_i\to\infty$, we can choose a subsequence of times $\{t_{i_j}\}_{j\in\N}$ as follows:
\begin{equation*}
i_1=\inf\{i:t_i\geq k\}, \quad i_j=\inf\{i:t_i\geq t_{i_{j-1}}+2k\}
\end{equation*}
for $j\geq 2$. We relabel $\{t_{i_j}\}_{j\in\N}$ as$\{t_i\}_{i\in\N}$. Then
\begin{equation*}
\sum_{i=1}^\infty\int_{t_i-k}^{t_i+k}\int_M|\Ob|^2<\int_0^\infty\int_M|\Ob|^2<\infty
\end{equation*}
implies that
\begin{equation*}
0 =\lim_{i\to\infty}\int_{t_i-k}^{t_i+k}\int_M|\Ob(g)|_g^2\,dV_g\,dt  =\lim_{i\to\infty}\int_{-k}^{k}\int_M|\Ob(g_i)|_{g_i}^2\,dV_{g_i}\,dt.
\end{equation*}
Since $\Ob(g_i)\to\Ob(g_\infty)$ in $C^\infty$ on compact subsets, this implies that $\Ob(g_\infty)\equiv 0$ on $[-k,k]$. So for each $k\in\N$, there exists a sequence of pointed solutions to AOF that converge to an obstruction flat pointed solution to AOF on $[-k,k]$. By taking a further diagonal subsequence over the $k$, we obtain a sequence of pointed solutions to AOF that converge to an obstruction flat complete pointed solution to AOF on $(-\infty,\infty)$. Since $g_\infty$ solves the conformal flow $\dif_t g_\infty=(-1)^{n/2}C(n)(\Delta^{\frac{n}{2}-1}R)g$, we see that $g_\infty(t)$ is in the conformal class of $g_\infty(0)$ for all $t\in(-\infty,\infty)$. If $M_\infty$ is compact, we can solve the Yamabe problem for $(M_\infty,[g_\infty(0)])$; the Yamabe problem was solved by Aubin, Trudinger, and Schoen (see \cite{AubinEquationsDifferentiellesNonLineairesEtProblemeDeYamabeConcernantLaCourbureScalaire, LeeParkerYamabeProblem}). Due to the conformal covariance of $\Ob$, we obtain a obstruction flat, constant scalar curvature complete pointed solution $(M_\infty,\hat{g}_\infty(t))$ to AOF with $\hat{g}_\infty(t)=\hat{g}_\infty(0)$ for all $t\in(-\infty,\infty)$.
\end{proof}
\bibliographystyle{hamsplain}


%
%
%
%
%
%
%
%
%
%
%
%
%
%
%
%
%
%
%
%
%
%
%
%
%
%
%
%
%
%
%
%
%
%
\end{document}